\documentclass[noams]{compositio}
\usepackage{amssymb}
\usepackage{amsfonts}
\usepackage[all]{xy}
\usepackage{amsmath}
\usepackage{array}
\usepackage{pgf}
\usepackage{epsfig}  		
\usepackage{epic,eepic} 
\usepackage{graphicx}
\RequirePackage{fix-cm}
\usepackage{float}

\newtheorem{theorem}{Theorem}
\newtheorem{proposition}{Proposition}
\newtheorem{lemma}{Lemma}
\newtheorem{corollary}{Corollary}

\newtheorem{conjecture}{Conjecture}

\newtheorem{definition}{Definition}

\newtheorem{remark}{Remark}


\newcommand{\St}{\mathrm{St}}
\newcommand{\triv}{{\mathbf1}}

\begin{document}

\title{On the strict endoscopic part of modular Siegel threefolds
}

\author{Shervin Shahrokhi Tehrani}
\email{shervin@math.toronto.edu}

\address{Mathematics Department\\ University of Toronto\\ Bahen Centre
40 St. George St.\\ Toronto, Ontario\\
CANADA\\ 
M5S 2E4}

\classification{11F27 (primary), 11F46 (secondary).}

\keywords{Modular Siegel threefold, Local system, Inner cohomology, Non-holomorphic strict endoscopic part, Theta lifting, Paramoular representation
}

\begin{abstract}
In this paper we study the non-holomorphic strict endoscopic parts of  inner cohomology spaces of a modular Siegel threefold respect to local systems. First we show that there is a non-zero subspace of the strict endoscopic part such that it is constructed by global theta lift of automorphic froms of $(GL(2) \times GL(2))/\mathbb{G}_{m}$. Secondly, we present an explicit analytic calculation of levels of lifted forms into $GSp(4)$, based on the paramodular representations theory for $GSp(4, F)$. Finally, we prove the conjecture, by C. Faber and G. van der Geer, that gives a description of the strict endoscopic part for Betti cohomology and (real) Hodge structures in the category of mixed Hodge structures, in which the modular Siegel threefold has level structure one.

\end{abstract}

\maketitle

\setcounter{tocdepth}{1}

\section{Introduction}
\label{sec1}

Let $\mathfrak{H}_{1}$ denote the upper half plane and $\Gamma_{1}(N)$ is the the kernel of the natural homomorphism $GL(2, \mathbb{Z}) \to GL(2, \mathbb{Z}/ N\mathbb{Z})$. We know that $\mathcal{A}_{1, N}(\mathbb{C}):= \Gamma_{1}(N) \backslash \mathfrak{H}_{1} $  is the complex points of the moduli space of principally polarized abelian varieties of genus one with fixed level structure $N$. The geometry of $\mathcal{A}_{1, N}$ is entirely important in arithmetic geometry. One way of studying the geometric properties of $\mathcal{A}_{1, N}$ is considering the local systems (locally constant sheaves) on $\mathcal{A}_{1, N}(\mathbb{C})$ and studying the mixed Hodge structures on the cohomology spaces of $\mathcal{A}_{1, N}(\mathbb{C})$ respect to the local systems. This approach gives us information on  the Betti cohomology and (real) Hodge structures of  $\mathcal{A}_{1, N}$. Note that a local system can be parametrized by an integer $k$ and denoted by $\mathbb{V}(k)$ (see \cite{3}). The integer $k$ is the highest weight of a finite dimensional representation $(\rho, V_{\rho})$ of $GL(2)$. Then $\mathbb{V}(k)$ is $\Gamma_{1}(N) \backslash (\mathfrak{H}_{1} \times V_{\rho})$, where $\Gamma_{1}(N)$ acts on $\mathfrak{H}_{1} \times V_{\rho}$ as $\gamma.(\tau, v) = (\gamma\tau, \rho(\gamma)v)$ for any $\gamma \in \Gamma_{1}(N)$, $\tau \in \mathfrak{H}_{1}$, and $v \in V_{\rho}$. \\

Fix a local system $\mathbb{V}(k)$ on $\mathcal{A}_{1}$. For simplicity, we assume that the level structure is trivial, i.e., $N = 1$. Let $H^{1}(\mathcal{A}_{1}(\mathbb{C}), \mathbb{V}(k))$ and $H^{1}_{c}(\mathcal{A}_{1}(\mathbb{C}), \mathbb{V}(k))$ denote the cohomology and compactly supported cohomology spaces. Moreover, there is a natural map $f_{!}: H^{1}_{c}(\mathcal{A}_{1}(\mathbb{C}), \mathbb{V}(k)) \to H^{1}(\mathcal{A}_{1}(\mathbb{C}), \mathbb{V}(k))$. The inner cohomology is defined as    
\begin{equation*}
H^{1}_{!}(\mathcal{A}_{1}(\mathbb{C}), \mathbb{V}(k)) = im(f_{!}) = f_{!}(H^{1}_{c}(\mathcal{A}_{1}(\mathbb{C}), \mathbb{V}(k))).
\end{equation*}

\noindent It is well-known that the inner cohomology carries a pure Hodge structure of weight one that is describable in terms of holomorphic and anti-holomorphic terms as follows.

\begin{equation*}
H^{1}_{!}(\mathcal{A}_{1}(\mathbb{C}), \mathbb{V}(k)) = H^{1, 0}_{!}(\mathcal{A}_{1}(\mathbb{C}), \mathbb{V}(k)) \oplus H^{0, 1}_{!}(\mathcal{A}_{1}(\mathbb{C}), \mathbb{V}(k)).
\end{equation*}

\noindent This Hodge structure has a well-known description in terms of modular forms of $SL(2, \mathbb{Z})$. Explicitly, if $S_{k}$ denotes the space of cusp forms of $SL(2, \mathbb{Z})$ of weight $k$, then the Eichler-Shimura theorem says that for an even $k \in \mathbb{Z}$, we have an isomorphism of inner cohomology 

\begin{equation}
\label{ES}
H^{1}_{!}(\mathcal{A}_{1}(\mathbb{C}), \mathbb{V}(k)) \cong S_{k + 2} \oplus \overline{S}_{k + 2},
\end{equation}
where $\overline{S}_{k + 2}$ denotes the anti-holomorphic modular forms of weight $k + 2$.\\

\noindent The Eichler-Shimura theorem gives us an explicit connection between geometric and arithmetic properties of $\mathcal{A}_{1}$. Now we can consider the locally symmetric space $\mathcal{A}_{g, N}:= \Gamma_{g}(N) \backslash \mathfrak{H}_{g}$ ($g \geq 2$). Its complex points is the moduli space of principally polarized abelian varieties of genus $g$. It is desirable to have a generalization of Eichler-Shimura theorem for an arbitrary genus $g$. The local systems and inner cohomology are generalized similarly as in the case $g = 1$ (see Section \ref{sec2}). Now if $\mathbb{V}(\lambda)$ is a local system on $\mathcal{A}_{g, N}$, then the inner cohomology $H_{!}^{d}(\mathcal{A}_{g, N}, \mathbb{V}(\lambda))$ carries a pure Hodge structure of weight $d$, where $d = \dim(\mathcal{A}_{g, N}(\mathbb{C}))$\footnote{Technically, if $\lambda$ is non-singular, then $H_{!}^{d}(\mathcal{A}_{g, N}, \mathbb{V}(\lambda))$ is the only non-zero cohomolgical space (See \cite{1}). Therefore, we need only to consider the inner cohomology of degree $d$. }. Moreover, this Hodge structure described explicitly in terms of spaces of cusp forms of $GSp(2g, \mathbb{Z})^{+}$ (see \cite{2}, p. 237). For example, the Hodge structure in the case $g = 2$ can be described in greater details as follows.\\

 Let $G$ denote the algebraic similitude symplectic group $GSp(4)$ over $\mathbb{Q}$. If $(\rho, V_{\rho})$ is an irreducible finite dimensional representation of $G$ with highest weight $\lambda$, then $\Gamma_{2}(N)$\footnote{$\Gamma_{2}(N)$ is the the kernel of the natural homomorphism $Sp(4, \mathbb{Z}) \to Sp(4, \mathbb{Z}/ N\mathbb{Z})$. Also, $\mathfrak{H}_{2}$ denotes the Siegel upper half space of genus 2.} acts on $\mathfrak{H}_{2} \times V_{\rho}$ as $\gamma.(\tau, v) = (\gamma\tau, \rho(\gamma)v)$ for any $\gamma \in \Gamma_{2}(N)$, $\tau \in \mathfrak{H}_{2}$, and $v \in V_{\rho}$. Therefore, the quotient space $\Gamma_{2}(N)\backslash (\mathfrak{H}_{2} \times V_{\rho})$ is a locally constant sheaf on $\mathcal{A}_{2, N}(\mathbb{C})$ denoted by $\mathbb{V}(\lambda)$. $\mathbb{V}(\lambda)$ is called the local system on $\mathcal{A}_{2, N}$ of the parameter $\lambda$.\\
  
\noindent Consider the local system $\mathbb{V}(\lambda)$ on $X_{N}:= \mathcal{A}_{2, N}$ such that $\lambda = (l, m)$ (see Section \ref{sec2}) be sufficiently regular (i.e., $l - m  \geq 2, m \geq 2 $). Now consider $H_{!}^{3}(X_{N}, \mathbb{V}(\lambda))$ (i.e., $d = 3$). According to the theory of automorphic vector bundles (\cite{12} and \cite{8}), we have an isomorphism

\begin{equation}
\label{1000}
H^{3}_{!}(X_{N}, \mathbb{V}(\lambda)) \cong \bigoplus_{\pi = \pi_{\infty} \otimes \pi_{fin}} m(\pi) (H^{3}(\mathfrak{g}, K, \pi_{\infty}) \otimes (\pi_{fin})^{K(N)}),
\end{equation}
where the sum runs over all irreducible cuspidal automorphic representations such that $\pi_{\infty}$ belongs to the set of discrete $(\mathfrak{g}, K)$-modules with infinitesimal Harish-Chandra character $\chi_{\lambda + \rho}$. $(\pi_{fin})^{K(N)}$ denotes the invariant elements of $\pi_{fin}$ under the action of open compact subgroup $K(N)$ defined in  (\ref{level}). $H^{3}(\mathfrak{g}, K, \pi_{\infty})$ is the $(\mathfrak{g}, K)$-cohomology defined for example in  \cite{40} and $m(\pi)$ denotes the multiplicity of $\pi$ in discrete spectrum. According to the fact that  $H^{3}(\mathfrak{g}, K, \pi_{\infty})$ is a  1-dimensional space (see \cite{41}), with respect to a fixed $(\mathfrak{g}, K)$-class basis for $H^{3}(\mathfrak{g}, K, \pi_{\infty})$, we can simplify (\ref{1000}) to

\begin{equation}
\label{200}
H^{3}_{!}(X_{N}, \mathbb{V}(\lambda)) \cong \bigoplus_{\pi = \pi_{\infty} \otimes \pi_{fin}} m(\pi) (\pi_{fin})^{K(N)}.
\end{equation}
 There are four different types of discrete series representations with infinitesimal Harish-Chandra character $\chi_{\lambda + \rho}$ for $GSp(4, \mathbb{R})^{+}$. These four discrete series are quite different as $(\mathfrak{g}, K)$-modules, where $\mathfrak{g}$ denotes the Lie algebra of $GSp(4, \mathbb{R})$ and $K = Z(\mathbb{R})U(2)$.\\  

Based on the pure Hodge structure on $H_{!}^{3}(X_{N}, \mathbb{V}(\lambda))$, there is a decreasing Hodge filtration 
\begin{equation}
\label{201}
 (0) \subset F^{3} \subset F^{2} \subset F^{1} \subset F^{0} = H_{!}^{3}(X_{N}, \mathbb{V}(\lambda)),  
\end{equation}
where $F^{r} = \bigoplus_{p \geq r} H_{!}^{p, q}(X, \mathbb{V}(\lambda)) $. In \cite{2}, Faltings-Chai present an explicit description of each term of the above filtration as cohomological spaces of $X_{N}$ respect to holomorphic vector bundles on $X_{N}$. This provides that the $F^{i}$ has a description in terms of vector valued Siegel modular forms of genus 2. For example, the holomorphic part of the Hodge filtration, $F^{3}$, is isomorphic with $S_{l - m, m + 3}$. Here $S_{j, k}$ denotes the space of cuspidal Siegel modular forms corresponded to the representation $Sym^{j} \otimes \det^{k}$ of $GL(2, \mathbb{C})$ (i.e., Siegel modular form of genus 2).\\

\noindent This is a generalization of Eichler-Shimura theorem to the case $g = 2$. In contrast to $g = 1$, in which we have only the holomorphic and anti-holomorphic terms, here there exist non-holomorphic terms $H_{!}^{1, 2}(X_{N}, \mathbb{V}(\lambda))$ and $H_{!}^{2, 1}(X_{N}, \mathbb{V}(\lambda))$. However, the $F^{i}$ term was described as a space of Siegel modular forms of $GSp(4)$. Since the theory of Siegel modular forms of genus $g \geq 2$ is not well understood as much as the elliptic modular forms (i.e., genus one modular forms), it is desirable to construct the cohomology classes in $H_{!}^{3}(X_{N}, \mathbb{V}(\lambda))$ in terms of elliptic modular forms by applying global theta lifting.\\ 

The classical approach to study elliptic modular forms was based on the analytic methods applied on the Fourier expansion of modular forms. This approach is completely effective when we study the modular forms on the upper half plane $\mathfrak{H}_{1}$ (See \cite{3}, Chapter one). This leads to fully understand the properties of the elliptic modular forms. However, if we want to study the Siegel modular forms on Siegel upper half space $\mathfrak{H}_{g}$ ($g \geq 2)$, the analytic approach is not effective because of difficulty on calculation of their Fourier expansions. Since the theory of Siegel modular forms on $\mathfrak{H}_{1}$ is well-known, an effective approach to study elements in $F^{i}$ is trying to construct them in terms of lifting of cuspidal automorphic forms of $GSp(2)$.\\

The first term in (\ref{201}), $F^{3} = H^{3, 0}_{!}(X_{N}, \mathbb{V}(\lambda))$ corresponds to the space of holomorphic Siegel modular forms. This part have been studied by T. Oda and J. Schwermer in \cite{25} and  S. B\"{o}cherer and R. Schulze-Pillot in \cite{28}. They construct non-zero cohomological classes via lifting theory in $F^{3}$ from the rank one algebraic group $O(4)$. Note that by conjugation property of the Hodge structure, their result for the holomorphic part can be considered for the anti-holomorphic part $H^{0, 3}_{!}(X, \mathbb{V}(\lambda))$ too. However, in this paper we are going to focus on the middle terms in (\ref{201}). Explicitly, we want to study the classes in $\mathbb{H}_{\text{non-holomorphic}} =  H^{1, 2}_{!}(X_{N}, \mathbb{V}(\lambda)) \oplus H^{2, 1}_{!}(X_{N}, \mathbb{V}(\lambda))$. This part is the direct sum over $(\pi_{fin})^{K(N)}$ of $\pi = \pi_{\infty} \otimes \pi_{fin}$, where $\pi_{\infty}$ is one of the two non-holomorphic discrete series representations of $GSp(4, \mathbb{R})^{+}$ with infinitesimal Harish-Chandra character $\chi_{\lambda + \rho}$ (see Section \ref{sec2}). There are some $\pi_{fin}$'s which contribute in all four pieces of Hodge filtration. It means such a $\pi_{fin}$ appears as a finite part of cuspidal automorphic representations  of $GSp(4, \mathbb{A})$ with all different four types of discrete series representations with infinitesimal Harish-Chandra character $\chi_{\lambda + \rho}$ in their archimedean part. In other words, $\pi_{fin}$ contributes in (\ref{200}) with positive $m(\pi)$ where $\pi_{\infty}$ runs over all four distinct discrete $(\mathfrak{g}, K)$-modules. They are called stable classes. But there are other $\pi_{fin}$'s that contribute either in holomorphic part or nonholomorphic part. These classes are called endoscopic cohomological classes. As mentioned before, the holomorphic endoscopic classes are well understood from the automorphic point of view, whereas a structural description of the non-holomorphic endoscopic classes is still open. In this paper we are going to present such a structural description for the non-holomorphic endoscopic classes in terms of automorphic forms of $(GL(2) \times GL(2))/ \mathbb{G}_{m}$.\\

Explicitly, we define 
\begin{equation}
\label{202}
\mathcal{C} = \lbrace \pi = \pi_{\infty} \otimes \pi_{fin} \mid \pi \quad \text{is irreducible and} \quad H^{3, 0}_{!}(X_{N}, \mathbb{V}(\lambda))(\pi_{fin})^{K(N)} = 0 \rbrace.
\end{equation}
Now let
\begin{equation}
\label{I7}
H_{\text{End}^{\text{s}}}^{3}(X, \mathbb{V}(\lambda)) = \bigoplus_{\pi \in \mathcal{C}  }  m(\pi)(\pi_{fin})^{K(N)}, 
\end{equation} 
which is called the strict endoscopic part. This part of inner cohomology is not detectable by holomorphic part of Hodge structure.  According to numerical calculation, authors conjecture in \cite{24} the following structural description.

\begin{conjecture}
\label{conj1}
The strict endoscopic part contributes as a finite part of $\theta$-lifted of cuspidal automorphic representations of $(GL(2) \times GL(2))/ \mathbb{G}_{m}$ over $\mathbb{Q}$.
\end{conjecture}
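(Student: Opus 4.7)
The strategy exploits the accidental isomorphism $GSO(2,2) \cong (GL(2) \times GL(2))/\mathbb{G}_{m}$ to recast Conjecture \ref{conj1} as a construction of non-zero cohomology classes by the similitude theta correspondence for the dual reductive pair $(GSp(4), GSO(2,2))$. Starting from a cuspidal automorphic representation $\sigma = \sigma_{1} \boxtimes \sigma_{2}$ of $(GL(2) \times GL(2))/\mathbb{G}_{m}$ (so that $\sigma_{1}, \sigma_{2}$ are cuspidal on $GL(2, \mathbb{A})$ with a common central character), I would construct $\pi = \Theta(\sigma)$ on $GSp(4, \mathbb{A})$, verify it is cuspidal with archimedean component in one of the two non-holomorphic discrete series of $GSp(4, \mathbb{R})^{+}$ having infinitesimal Harish-Chandra character $\chi_{\lambda + \rho}$, and then show that $\pi \in \mathcal{C}$ contributes non-trivially to \eqref{I7}.

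First I would set up the global theta lift. Using Roberts' extension of Rallis' tower theory to similitude groups together with a judicious choice of Schwartz function, $\Theta(\sigma)$ descends to a cuspidal automorphic representation of $GSp(4, \mathbb{A})$ provided it does not vanish identically on the first nontrivial step of the Witt tower; the non-vanishing is controlled by the product of local non-vanishings (which can be arranged place by place) and by a global period on $\sigma$, equivalently the non-vanishing of a central value of the Rankin--Selberg $L$-function $L(s, \sigma_{1} \times \sigma_{2})$.

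Next I would carry out the archimedean analysis. Choose $\sigma_{1, \infty}, \sigma_{2, \infty}$ to be holomorphic discrete series of $GL(2, \mathbb{R})$ of weights calibrated to $\lambda = (l, m)$. Using explicit models of the oscillator representation of the real dual pair $(Sp(4, \mathbb{R}), O(2, 2))$ and the correspondence on joint $K$-types, I would identify $\Theta(\sigma)_{\infty}$ as one of the two non-holomorphic discrete series of $GSp(4, \mathbb{R})^{+}$ of infinitesimal character $\chi_{\lambda + \rho}$. Combined with \eqref{200}, this forces $H^{3, 0}_{!}(X_{N}, \mathbb{V}(\lambda))(\pi_{fin})^{K(N)} = 0$, so $\pi \in \mathcal{C}$ and the lifted class lies in $\mathbb{H}_{\text{non-holomorphic}}$ by construction. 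At the finite places, the paramodular classification of Roberts--Schmidt for $GSp(4, \mathbb{Q}_{p})$ determines $\Theta(\sigma)_{p}$ in terms of $\sigma_{1, p}, \sigma_{2, p}$ and pins down the explicit paramodular level, yielding non-vanishing of $(\pi_{fin})^{K(N)}$ for an appropriate $N$.

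The principal obstacle will be the archimedean identification together with the global non-vanishing statement. One must show not merely that the weights match, but that the Harish-Chandra module obtained is exactly the expected non-holomorphic discrete series (not vanishing, not landing higher in the Witt tower, and not a different member of the relevant $L$-packet), which demands a careful analysis of the $(\mathfrak{g}, K)$-structure of the oscillator representation restricted along the joint $K$-types of $(U(2), O(2) \times O(2))$. A secondary difficulty is securing the existence of pairs $(\sigma_{1}, \sigma_{2})$ with non-vanishing central $L$-value in each regular range of $\lambda$, which is necessary to ensure that the constructed subspace of $H_{\text{End}^{\text{s}}}^{3}(X, \mathbb{V}(\lambda))$ is genuinely non-zero.
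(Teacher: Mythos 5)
Your overall architecture (realize $(GL(2)\times GL(2))/\mathbb{G}_{m}$ as $GSO(2,2)$, theta-lift to $GSp(4)$, match the archimedean component to a non-holomorphic discrete series with infinitesimal character $\chi_{\lambda+\rho}$, control the level via Roberts--Schmidt) is the same as the paper's. But there is a genuine gap at the decisive step. From the identification of $\Theta(\sigma)_{\infty}$ as a non-holomorphic discrete series you conclude that ``this forces $H^{3,0}_{!}(X_{N},\mathbb{V}(\lambda))(\pi_{fin})^{K(N)}=0$, so $\pi\in\mathcal{C}$.'' That is a non-sequitur: membership in $\mathcal{C}$ is a condition on the \emph{finite} part, and a priori the same $\pi_{fin}$ could occur with positive multiplicity in a different cuspidal automorphic representation $\pi'=\pi^{(3,0)}_{\Lambda+\rho}\otimes\pi_{fin}$ (this is exactly what happens for the stable, non-endoscopic classes, since strong multiplicity one fails for $GSp(4)$). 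Proving strictness is the whole point, and the paper does it via Weissauer's multiplicity formula $m(\pi')=\tfrac{1}{2}(1+(-1)^{e(\pi')})$ for representations weakly equivalent to a weak endoscopic lift: the global theta lift from the generic datum is globally generic, so every finite local component has a Whittaker model, while the holomorphic and anti-holomorphic discrete series do not; hence $e(\pi')=1$ and $m(\pi')=0$. Nothing in your proposal plays this role.

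Two secondary corrections. First, the global non-vanishing for the pair $(GO(2,2),GSp(4))$ is \emph{not} governed by a central Rankin--Selberg $L$-value: Takeda's theorem reduces it to local non-vanishing at every place, and for a generic $\pi(\tau_{1},\tau_{2})$ exactly one extension $\sigma^{+}$ of $\pi$ from $GSO(2,2)$ to $GO(2,2)$ (the one with all local signs $+$) has non-zero, generic, cuspidal lift. Conditioning on a central $L$-value would make your construction conditional where the paper's is not, and you also omit the $GSO\to GO$ extension issue entirely, which is where the $\pm$ sign analysis lives. Second, even repaired, this argument only produces a non-zero subspace of $H^{3}_{\mathrm{End^{s}}}$ inside the image of the theta lift; exhausting the strict endoscopic part (which is what the conjecture asserts) requires a dimension count, which the paper carries out only for level one using Tsushima's formula.
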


\noindent Moreover, if we consider the local system $\mathbb{V}(\lambda)$ on $\mathcal{A}_{2} := \mathcal{A}_{2, 1}$, the Conjecture \ref{conj1} can be stated in greater details. In \cite{22}, C. Faber and G. van der Geer, based on numerical calculations, conjecture an explicit description of the strict endoscopic part of a local system $\mathbb{V}(l, m)$ on $\mathcal{A}_{2}$, where $l > m > 0$. \\

\noindent Let $S_{k} = S_{k}(\Gamma_{1}(1))$ denote the space of cusp forms on $\mathfrak{H}_{1}$. Following A. Scholl (\cite{23}), we can associate a motive to this space denoted by $S[k]$. Moreover, $s_{k} = \dim(S_{K})$. Also, $\mathbb{L}$ is the 1-dimensional Tate Hodge structure of weight 2. It is the Tate motive corresponds to the second cohomology of $\mathbb{P}^{1}$.\\

\noindent Let 
\begin{equation*}
e(A_{2}, \mathbb{V}(\lambda)) = \sum_{i}(-1)^{i}[H^{i}(\mathcal{A}_{2}, \mathbb{V}(\lambda))],
\end{equation*}
denote the motivic Euler characteristic of  $H^{i}(\mathcal{A}_{2}, \mathbb{V}(\lambda))$, where this expansion is taken in the Grothendieck group in the category of mixed Hodge structure. Similarly, Let 
\begin{equation*}
e_{c}(\mathcal{A}_{2}, \mathbb{V}(\lambda)) = \sum_{i}(-1)^{i}[H^{i}_{c}(\mathcal{A}_{2}, \mathbb{V}(\lambda))],
\end{equation*}
denote  the motivic Euler characteristic of  $H^{i}_{c}(\mathcal{A}_{2}, \mathbb{V}(\lambda))$. The kernel of $f_{!}$ is called the Eisenstein cohomology. The corresponding motivic Euler characteristic denotes by $e_{\text{Eis}}(\mathcal{A}_{2}, \mathbb{V}(\lambda))$. Similarly, $e_{!}(\mathcal{A}_{2}, \mathbb{V}(\lambda))$ denotes the corresponding motivic Euler characteristic of the inner cohomology. Based on the pure Hodge structure on $H_{!}^{3}(\mathcal{A}_{2}, \mathbb{V}(\lambda))$, there is a decreasing Hodge filtration 
\begin{equation*}
 (0) \subset F^{3} \subset F^{2} \subset F^{1} \subset F^{0} = H_{!}^{3}(\mathcal{A}_{2}, \mathbb{V}(\lambda)),  
\end{equation*}
where $F^{r} = \bigoplus_{p \geq r} H_{!}^{p, q}(\mathcal{A}_{2}, \mathbb{V}(\lambda)) $. Under the isomorphism (\ref{5}), we have 

\begin{equation}
\label{I2}
H^{3}_{!}(\mathcal{A}_{2}, \mathbb{V}(\lambda)) \cong \bigoplus_{\pi = \pi_{\infty} \otimes \pi_{fin}} m(\pi)(\pi_{fin})^{K_{0}},
\end{equation}
where the sum runs over all irreducible cuspidal automorphic representations such that $\pi_{\infty}$ belongs to the discrete series with infinitesimal Harish-Chandra character $\chi_{\lambda + \rho}$ and $\pi_{fin}$ has the level $K_{0}$, where $K_{0}$ is the restricted product of $K_{\upsilon}$'s such that $K_{\upsilon} = GSp(4, \mathcal{O}_{\upsilon})$ (i.e., paramodular group $K(\mathcal{P}_{\upsilon}^{0})$). Now 
\begin{equation*}
\mathcal{C} = \lbrace \pi = \pi_{\infty} \otimes \pi_{fin} \in \mathcal{A}_{0}(G) \mid \pi \quad \text{is irreducible and} \quad H^{3}_{!}(\mathcal{A}_{2}, \mathbb{V}(\lambda))(\pi_{fin}) = 0 \rbrace.
\end{equation*}
Let
\begin{equation}
\label{611}
H_{\text{End}^{\text{s}}}^{3}(\mathcal{A}_{2}, \mathbb{V}(\lambda)) = \bigoplus_{\pi \in \mathcal{C}  }  m(\pi)(\pi_{fin})^{K_{0}}, 
\end{equation} 
which is the strict endoscopic part and its corresponding motivic Euler characteristic denoted by $e_{\text{endo}}(\mathcal{A}_{2}, \mathbb{V}(\lambda))$. Now we have

\begin{conjecture}[C. Faber and G. van der Geer]
\label{conj2}
With the above notation, we have
\begin{equation}
\label{Main}
e_{\text{endo}}(A_{2}, \mathbb{V}(\lambda)) = -s_{l + m + 4}S[l - m + 2]\mathbb{L}^{m + 1},
\end{equation}
where $\lambda = (l, m)$ and $\lambda$ is sufficiently regular.
\end{conjecture}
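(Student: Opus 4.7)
The plan is to combine three ingredients established in the earlier sections of the paper: (i) the structural theorem (the upgrade of Conjecture \ref{conj1}) that every class in $H^3_{\text{End}^{\text{s}}}(\mathcal{A}_2, \mathbb{V}(\lambda))$ is the finite part of a global theta lift $\pi = \theta(\sigma_1 \boxtimes \sigma_2)$ from $(GL(2) \times GL(2))/\mathbb{G}_m$; (ii) the local paramodular computations that determine when $\pi$ has a non-zero $K_0$-invariant vector at each finite place; and (iii) the archimedean matching that, once the infinitesimal character $\chi_{\lambda+\rho}$ and the non-holomorphic discrete series type are fixed, pins down the weights of $\sigma_1, \sigma_2$. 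Together (i)--(iii) should show that for $\lambda = (l, m)$ sufficiently regular, the cuspidal representations $\sigma = \sigma_1 \boxtimes \sigma_2$ whose theta lifts contribute to $\mathcal{C}$ at level one are precisely those for which each $\sigma_i$ is everywhere unramified with trivial central character and corresponds to a level-one normalized Hecke eigenform $f_i \in S_{k_i}(SL(2,\mathbb{Z}))$ with $k_1 = l+m+4$ and $k_2 = l-m+2$; the strict inequality $k_1 > k_2$ is what forces the lift into the non-holomorphic, rather than the holomorphic, endoscopic piece.

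I would then identify the motivic contribution of each such $\pi$ to the non-holomorphic part of $H^3_!(\mathcal{A}_2, \mathbb{V}(\lambda))$, which is a pure Hodge structure of weight $l+m+3$ concentrated in the bidegrees $(l+2, m+1)$ and $(m+1, l+2)$. For a Hecke eigenform $f_2 \in S_{l-m+2}$, the two-dimensional Scholl motive $S[l-m+2]_{f_2}$ has Hodge type $(l-m+1, 0) + (0, l-m+1)$, so tensoring by $\mathbb{L}^{m+1}$ shifts it into exactly these two bidegrees, identifying the $\pi$-isotypic piece with $S[l-m+2]_{f_2} \otimes \mathbb{L}^{m+1}$. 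The choice of the companion eigenform $f_1 \in S_{l+m+4}$ only distinguishes the lifted representation $\pi$ and does not affect the Galois/Hodge realization extracted from $f_2$, so as $f_1$ varies over an eigenbasis of $S_{l+m+4}$ the contributions accumulate into a scalar multiplicity of $s_{l+m+4}$.

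Summing the motives $S[l-m+2]_{f_2}$ over an eigenbasis of $S_{l-m+2}$ reconstitutes the full motive $S[l-m+2]$, and the alternating sign $(-1)^3 = -1$ coming from the odd cohomological degree three yields the claimed formula. The main obstacle will be the rigidity step underlying the first paragraph: one must upgrade the inclusion provided by Conjecture \ref{conj1} to an equality, ruling out every other potential source of strict endoscopic classes at level one (CAP representations, residual spectrum, and theta transfers from compact inner forms of $GSO(4)$), and one must confirm that each lifted $\pi$ occurs with multiplicity exactly one in the cuspidal discrete spectrum. This rests on Arthur's classification for $GSp(4)$, combined with a clean separation from the holomorphic endoscopic classes of Oda--Schwermer and B\"ocherer--Schulze-Pillot, which lie inside $F^3$ and are therefore excluded from $\mathcal{C}$ by definition.
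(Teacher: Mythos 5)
Your plan correctly identifies the lower-bound half of the argument --- constructing strict endoscopic classes as theta lifts of pairs of level-one eigenforms of weights $l+m+4$ and $l-m+2$, and matching their Hodge bidegrees $(l+2,m+1)$, $(m+1,l+2)$ with those of $S[l-m+2]\mathbb{L}^{m+1}$ --- and this part coincides with what the paper does via the map (\ref{15}), Corollary \ref{c5} (the lift of an everywhere unramified pair is unramified, hence $K_{0}$-invariant) and Theorem \ref{th5} (the lift avoids $F^{3}$). The genuine gap is in your closing step. You propose to prove exhaustion --- that \emph{every} class in $H^{3}_{\text{End}^{\text{s}}}(\mathcal{A}_{2},\mathbb{V}(\lambda))$ arises as such a lift --- by appealing to Arthur's classification for $GSp(4)$ to rule out CAP representations, residual contributions and transfers from inner forms, but you do not carry this out; you name it yourself as ``the main obstacle''. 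Without it your argument only exhibits a subspace of the strict endoscopic part whose Euler characteristic is $-s_{l+m+4}S[l-m+2]\mathbb{L}^{m+1}$, which is exactly the content of Theorem \ref{th5} and Corollary \ref{c4}, not the asserted equality. Note also that a full invocation of Arthur's classification is far more machinery than the paper uses; the only multiplicity input the paper needs is Weissauer's formula (\ref{f}) for weak endoscopic lifts, and that formula is used only to show the constructed classes avoid the holomorphic part, not to bound the whole strict endoscopic space.

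The paper closes the argument by an entirely different and more elementary device that your proposal never mentions: at level one the Euler characteristic of the inner cohomology is known from the numerical work cited as \cite{43} and \cite{22}, and Tsushima's dimension formula (\cite{26}, \cite{27}) computes the holomorphic piece $\dim S_{l-m,m+3}$; subtracting the holomorphic and anti-holomorphic contributions leaves an upper bound of $2s_{l+m+4}s_{l-m+2}$ for the dimension of the strict endoscopic part. The explicit construction of a subspace of exactly this dimension --- the pairs $(f_{i},g_{j})$ and $(f_{i},\overline{g_{j}})$, so that each $\Pi_{\sigma^{+},fin}$ occurs with multiplicity two, populating both non-holomorphic Hodge types --- then forces equality. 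To complete your proof you must either supply the Arthur-type exhaustion argument in detail, including multiplicity statements for the relevant packets, or replace it with this dimension count against the known Euler characteristic; as written, the decisive inequality in one direction is missing.
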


This paper provides a systematic approach to attack Conjecture \ref{conj1} and fully prove the equation (\ref{Main}) in Conjecture \ref{conj2} for the Betti cohomology and (real) Hodge structures. This approach contains an application of a key construction by M. Harris and S. S. Kudla in \cite{5} of cohomology classes of Siegel threefolds via theta lifting from $GO(2, 2)$. However, the non-vanishing of this construction is an important issue which we overcome on it by using a result by S. Takeda in \cite{10}. Moreover, because of using Harris-Kudla's result, we lose our control on the level structure of our cohomology classes. We solve this problem by an explicit investigation on the local theory for each place. Explicitly, we regain the level structure control by finding each local component of our structure. To do this, we use a result by W. Gan and S. Takeda in \cite{17} and figure out the level of lifted forms to $GSp(4)$ by a result of B. Roberts and R. Schmidt in \cite{16}.\\ 

\noindent Technically, Corollary \ref{c4} gives a structural description of a non-zero subspace of the strict endoscopic part constructed via $\theta$-lifting with determined dimension. However because there is no determined formula for dimension of inner cohomology, our approach can not prove the first conjecture completely. Since we know that the Euler characteristic of inner cohomology (\cite{22}) for $\mathcal{A}_{2, 1}$ and have Tsushima's dimension formula (\cite{26} and \cite{27}) we prove the Conjecture \ref{conj2} in the Section \ref{sec9}. Note that I don't claim to prove Conjecture \ref{conj2} in the category of motives, but we show that for Betti cohomology and (real) Hodge structures in the category of mixed Hodge structures the equation (\ref{Main}) satisfies. This could be an evidence to strengthen the truthfulness of Conjecture \ref{conj2} in the category of motives. Moreover Section \ref{sec7} has an explicit analytic calculation of level of global theta lifted automorphic representations to $GSp(4)$ based on theory of local newforms of $GSp(4)$ developed by B. Roberts and R. Schmidt in \cite{16}. This explicit analytic results could provide so many important information on the global theta lift to $GSp(4)$. For example, in Section \ref{sec10}, we revisit our results in terms of Hecke eigenvalues.\\

This paper is organized as follows. Section \ref{sec2} recalls the definition of local systems on $X_{N}$. Moreover, an explicit description of the Hodge filtration terms is presented in Table \ref{table1}. Sections \ref{sec3} and \ref{sec4} review the theory of automorphic vector bundles as in \cite{5}. In these sections we present the description of cohomological classes in terms of finite part of cuspidal automorphic representations. Section \ref{sec5} restates the global theta correspondence for the similitude dual reductive pair $(GO(2, 2), GSp(4))$ in our desired way. This section also contains the non-vanishing criteria of the global theta lift. In Section \ref{sec6} we construct the strict endoscopic classes and state the first main result in Theorem \ref{th5}. Section \ref{sec7} provides an explicit analytic analysis of the level of lifted admissible representations into $GSp(4, F)$, where $F$ is a non-archimedean local field. To clarify the complicated calculation in Section \ref{sec7}, Section \ref{sec8} involves several practical examples. In Section \ref{sec9} we give the proof of Conjecture \ref{conj2} according to Corollary \ref{c5}. Finally, Section \ref{sec10} contains the interpretation of our results in terms of Hecke Operators. 

\subsection*{Notation} 
This part contains some general notation needed in this paper.
Let $\Gamma$ be an arithmetic subgroup of the symplectic group  $Sp(4, \mathbb{Z})$. Let $\mathfrak{H}_{2}$ denote the Siegel upper half space $\lbrace Z = Z^{t} \in Mat_{2, 2}(\mathbb{C}) \mid Im(Z) > 0\rbrace$ of genus $2$. Matrices $\begin{pmatrix}
A & B\\
C & D
\end{pmatrix} \in GSp(4, \mathbb{R})$ act on $\mathfrak{H}_{2}$ by linear fractional action $M.Z = (AZ + B)(CZ + D)^{-1}$. Let $X_{\Gamma}$ denote $\Gamma \backslash \mathfrak{H}_{2}$ and $\overline{X}_{\Gamma}$ denote an appropriate toroidal compactification of $X_{\Gamma}$. Moreover, $j: X_{\Gamma} \to \overline{X}_{\Gamma}$ denotes the natural inclusion map. If $\mathcal{W}$ denotes a holomorphic vector bundle on $X_{\Gamma}$, then it has a natural extension on $\overline{X}_{\Gamma}$ denoted by $\overline{\mathcal{W}}$ (see \cite{2}, Thm 4.2., p. 255).\\

If $G$ is an algebraic group over $\mathbb{Q}$, then $G(\mathbb{R})$ denotes its real points and $G(\mathbb{R})^{+}$ means the connected component of $G(\mathbb{R})$. Let $\mathcal{A}(G)$ and $\mathcal{A}_{0}(G)$ denote the spaces of automorphic functions and cuspidal automorphic functions on $G(\mathbb{A})$, where $\mathbb{A}$ is the adelic ring of $\mathbb{Q}$. We denote the finite part of $\mathbb{A}$ by $\mathbb{A}_{f}$.\\

Let $(\pi, V_{\pi})$ be an automorphic representations of $G(\mathbb{A})$. We always assume that $V_{\pi} \subset \mathcal{A}(G)$ and $\pi = \pi_{\infty} \otimes \pi_{fin}$, where $\pi_{\infty}$ and $\pi_{fin}$ denote the archimedean and finite parts of $\pi$ respectively.\\

\section{Local systems}
\label{sec2}

Let $G$ denote the algebraic similitude symplectic group $GSp(4)$ over $\mathbb{Q}$. If $(\rho, V_{\rho})$ is an irreducible finite dimensional representation of $G$ with highest weight $\lambda$ and $\Gamma_{2}(N)$ acts on $\mathfrak{H}_{2} \times V_{\rho}$ as $\gamma.(\tau, v) = (\gamma\tau, \rho(\gamma)v)$ for any $\gamma \in \Gamma_{2}(N)$, $\tau \in \mathfrak{H}_{2}$, and $v \in V_{\rho}$, then the quotient space $\Gamma_{2}(N)\backslash (\mathfrak{H}_{2} \times V_{\rho})$ is a locally constant sheaf on $\mathcal{A}_{2, N}(\mathbb{C})$. It is denoted by $\mathbb{V}(\lambda)$. $\mathbb{V}(\lambda)$ is called the local system on $\mathcal{A}_{2, N}$ of the parameter $\lambda$. If $\overline{\mathcal{A}_{2, N}(\mathbb{C})}$ denotes a toroidal compatification of $\mathcal{A}_{2, N}(\mathbb{C})$, then the local system $\mathbb{V}(\lambda)$ has an extension on $\overline{\mathcal{A}_{2, N}(\mathbb{C})}$ denoted by $\overline{\mathbb{V}}(\lambda)$. \\

First of all, we need a convenient parametrization for weight $\lambda$. We begin with real points of $G$, $G(\mathbb{R}) = GSp(4, \mathbb{R})$. Let $Z_{G}$ be the center of $G$, which is given by scalar multiplications. Also $K_{G} = U(2).Z_{G}(\mathbb{R})$ in which $U(2)$ denotes the unitary group. We take $T$ as the product of the standard diagonal Cartan subgroup of $U(2)$ with $Z_{G}(\mathbb{R})$. If $\mathfrak{t}_{\mathbb{C}}$ denotes the complexification of the Lie algebra of $T$, then $\mathfrak{t}_{\mathbb{C}} \simeq \lbrace(x_{1}, x_{2}, z)\rbrace = \mathbb{C}^{3}$. We take $\varepsilon_{i}((x, z)) = x_{i}$, $i = 1, 2$ and $\nu((x, z)) = z$ as a basis for $\mathfrak{t}_{\mathbb{C}}^{\vee}$. Let $\alpha = \varepsilon_{1} - \varepsilon_{2}$ and $\beta = 2\varepsilon_{2}$. Then, $R_{c}^{+} = \lbrace \alpha \rbrace$ and $R_{n}^{+} = \lbrace \beta, \alpha + \beta, 2\alpha + \beta \rbrace$ be the  set of the positive compact roots and non-compact roots of the root system of $G(\mathbb{C}) = GSp(4, \mathbb{C})$ respectively. $\lambda$ is a highest weight, therefore it can be written as $\lambda = l\varepsilon_{1} + m\varepsilon_{2} + c\nu \in \mathfrak{t}_{\mathbb{C}}^{\vee} $ such that $l, m, c \in \mathbb{Z}$ and $l + m = c$ mod $2$. So $\lambda$ can be parametrized by a triple of integers $(l, m, c)$ with mentioned condition. This is the parametrization that will be used in this paper.\\

\noindent Let $\eta: G \to \mathbb{G}_{m}$ denote the multiplier homomorphism. Then the local system corresponded to $\eta^{c}$ is denoted by $\mathbb{C}(c)$ ($c \in \mathbb{Z}$). Also, $\mathbb{V}_{1, 0}$ denotes the local system defined by $\mathbb{V}_{1, 0} := R^{1}\pi_{*}(\mathbb{Q})$, where $R^{1}$ is the first direct image functor ($\mathbb{V}_{1, 0}$ is the local system corresponded to the $\eta^{-1}$ times the standard representation of $G(\mathbb{C})$ on $\mathbb{C}^{4}$.) Now $\mathbb{V}_{l, m}$ is the local system of weight $l + m$ and occurs `for first time' in $\text{Sym}^{l - m}(\mathbb{V}_{1,0})\otimes \text{Sym}^{m}(\wedge^{2}\mathbb{V}_{1, 0})$. Note that if $\lambda = (l, m, -l-m)$, then $\mathbb{V}(\lambda) = \mathbb{V}_{l, m}$. Therefore, according to the above setting, if $\lambda = (l, m , c)$ such that $l, m, c \in \mathbb{Z}$ and $l + m = c$ mod $2$, then $\mathbb{V}(\lambda)$ is the local system $\mathbb{V}_{l, m}$ twisted by $\mathbb{C}(c+l+m)$, i.e., $\mathbb{V}(\lambda) = \mathbb{V}_{l, m} \otimes \mathbb{C}(c+l+m)$. But we can always twist back $\mathbb{V}(\lambda)$ by $\mathbb{C}(-c-l-m)$. Although, our results do not depend on this normalization, but for simplicity in notation, we always assume that $ c = -l-m$. Moreover, we denote $\lambda$ simply by $(l, m)$, where $l, m \in \mathbb{Z}$.\\

Let $X_{N} = \mathcal{A}_{2, N}(\mathbb{C})$ and fix the local system $\mathbb{V}(\lambda)$ on $X_{N}$, where $\lambda = (l, m)$ such that $l, m\in \mathbb{Z}$. $H^{i}(X_{N}, \mathbb{V}(\lambda))$ and $H_{c}^{i}(X_{N}, \mathbb{V}(\lambda))$ denote the cohomology and compactly supported cohomology of $X_{N}$ respect to the locally constant sheaf $\mathbb{V}(\lambda)$. There is a natural map $f_{!}: H_{c}^{i}(X_{N}, \mathbb{V}(\lambda)) \to H^{i}(X_{N}, \mathbb{V}(\lambda)) $ whose image is called inner cohomology and denoted by $H_{!}^{i}(X_{N}, \mathbb{V}(\lambda))$. By work of Faltings and Chai, one knows that for a non-singular dominant weight $\lambda$, the cohomology group $H_{!}^{3}(X_{N}, \mathbb{V}(\lambda))$ is the only non-vanishing inner coholomlogy (see \cite{1} , Cor. to Thm. 7, p. 84 and \cite{2}, p. 233-7). Moreover, one knows that $H^{3}(X_{N}, \mathbb{V}(\lambda))$ (resp. $H_{c}^{3}(X_{N}, \mathbb{V})$) carries mixed Hodge structures of weight $\geq l + m +3$ (resp. $\leq l + m + 3$) and $H_{!}^{3}(X_{N}, \mathbb{V}(\lambda))$ carries a pure Hodge structure with Hodge filtration 

\begin{equation}
\label{1}
(0) \subset F^{3} \subset F^{2} \subset F^{1} \subset F^{0} = H_{!}^{3}(X_{N}, \mathbb{V}(\lambda)),  
\end{equation}
where $F^{r} = \bigoplus_{p \geq r} H_{!}^{p, q}(X_{N}, \mathbb{V}(\lambda)) $ (where $p, q \in \mathbb{Z}^{+}\cup \lbrace 0 \rbrace$ and $p + q = 3$).\\

\noindent The $(p, q)$-term can be described as the inner cohomology of $X_{N}$ with respect to a holomorphic vector bundle on $X_{N}$\footnote{These holomorphic vector bundles are constructed by restriction of equivariant holomorphic vector bundle on $\mathfrak{H}_{2}$. The explicit construction is mentioned in \cite{2}.}. Explicitly, according to \cite{2}, p. 237, we have the following table. In Table \ref{table1}, $H_{!}^{i}(\overline{X}_{N}, \overline{\mathcal{W}}(\Lambda))$ denotes the image of $\psi$, where $\psi$ is  
\begin{equation}
\label{inner}
\psi: H^{i}(\overline{X}_{N}, \overline{\mathcal{W}}(\Lambda)\otimes \mathcal{O}(-D)) \to H^{i}(\overline{X}_{N}, \overline{\mathcal{W}}(\Lambda)).
\end{equation}
Here $D$ denotes the boundary of the toroidal compactification of $X_{N}$.
\begin{table}[H]
\centering
\begin{tabular}{| l | l | l | l | l | l | }
    \hline
  
   Type & $(p, q)$ & $p= l(\omega_{i})$ & $q$ & $H_{!}^{p, q}(X_{N}, \mathbb{V}(\lambda))$ & $\Lambda$ \\ 
    \hline
 I & $(0, 3)$ & 0 & 3 & $ H_{!}^{3}(\overline{X}_{N}, \overline{\mathcal{W}}(-m, -l))$ & $(-m, -l)$  \\ 
    \hline
 II & $(1, 2)$ & 1 & 2 & $ H_{!}^{2}(\overline{X}_{N}, \overline{\mathcal{W}}(m + 2, - l))$ & $(m+2, -l)$   \\ 
   \hline
   III & $(2, 1)$ & 2 & 1 & $H_{!}^{1}(\overline{X}_{N}, \overline{\mathcal{W}}(l + 3, 1 - m))$ & $(l+3, 1-m)$   \\ 
    \hline
   IV & $(3, 0)$ & 3 & 0 & $ H_{!}^{0}(\overline{X}_{N}, \overline{\mathcal{W}}( l + 3,  m + 3))$ & $(l+3, m+3)$  \\ 
    \hline
   
    \end{tabular}
    \caption{$(p, q)$-term in Hodge filtration}
  \label{table1}
\end{table}

The inner cohomology space of $\overline{X}_{N}$ respect to a holomorphic bundle can be interpreted as a space of cuspidal Siegel modular forms of genus 2 over $\mathfrak{H}_{2}$ (see \cite{3}, p. 209). Therefore, the $(p, q)$-term in (\ref{1}) can be considered as a space of cuspidal Siegel modular forms of genus 2 of weight $\sigma_{\Lambda}$, where $\sigma_{\Lambda}$ is the finite dimensional representation of $GL(2, \mathbb{C})$ of highest weight $\Lambda$. For example,
\begin{equation}
\label{Faltings}
F^{3} \cong S_{l-m, m+3}(\Gamma_{2}(N)),
\end{equation} 
where  $S_{j, k}(\Gamma_{2}(N))$ is the space of Siegel modular forms corresponded to the representation $\text{Sym}^{j}\otimes \det^{k}$ of $GL(2, \mathbb{C})$ of highest weight $(j+k, k)$.\\

\section{Discrete series representations of $GSp(4, \mathbb{C})$ and cohomlogy classes}
\label{sec3}
 In this section, we will review some results of \cite{4} about the interpretation of automorphic representations of discrete types as cohomology classes. We are going to restrict ourself to the case $G$, which is the algebraic group $GSp(4)$ over $\mathbb{Q}$. This case was studied in \cite{5}. The main result in this section is a correspondence between the $(p, q)$-terms of Hodge structure of inner cohomology of $X_{N}$ respect to the local system $\mathbb{V}(\lambda)$ mentioned in Table \ref{table1} with four spaces of automorphic representations of $GSp(4, \mathbb{A})$ that are corresponded to the four discrete series representations of $G(\mathbb{R})^{+}$ with Harish-Chandra character $\chi_{\lambda + \rho}$.  \\
 
Let $X = \mathfrak{H}_{2}^{+} \bigsqcup \mathfrak{H}_{2}^{-} $, where $\mathfrak{H}_{2}^{+}$ denotes the Siegel upper half space of genus 2 and $\mathfrak{H}_{2}^{-}$ is the lower Siegel upper half space of genus 2. Now $(G, X)$ is a Shimura datum (see \cite{21}). Let $M_{G} = M(G, X)$ be the corresponding Shimura variety respect to Shimura datum $(G, X)$ which has a canonical model over its reflex field $E(G, X)$. If $K \subset G(\mathbb{A}_{f})$ is an open compact subgroup, then the complex points of the $K$-component of $M_{G}$ is
\begin{equation*}
 _{K}M_{G}(\mathbb{C}) = G(\mathbb{Q})\backslash (X \times G(\mathbb{A}_{f}))/K,
\end{equation*}
 and $M_{G}(\mathbb{C}) = \varprojlim_{K} \hspace{.001mm} _{K}M_{G}(\mathbb{C})$. Note that if $K(N)$ is defined as the open compact subgroup of $GSp(4, \mathbb{A}_{f})$ such that 
 \begin{equation}
 \label{level}
 K(N) = \lbrace k = \begin{pmatrix} A & B\\
                                C & D \end{pmatrix} \mid k \equiv I_{4} \quad \text{mod $N$} \rbrace, 
 \end{equation}
then $_{K(N)}M_{G}(\mathbb{C})$ is a finite disjoint union copies of  $X_{N}$. Therefore, $X_{N}$ is embedded into $K(N)$-component of $M_{G}(\mathbb{C})$.\\ 

Let $K_{G}$ be the subgroup of $G(\mathbb{R})$ defined by $U(2)Z_{G}(\mathbb{R})$. Let $(\sigma, V_{\sigma})$ be an irreducible finite dimensional representation of $K_{G}$ with highest weight $\Lambda$. We define an algebraic vector bundle 
\begin{equation*}
\mathcal{W}(\Lambda) := _{K}\mathcal{E}_{\Lambda} = G(\mathbb{Q}) \backslash (V_{\sigma} \times G(\mathbb{R}) \times G(\mathbb{A}_{f}))/K_{G}.K
\end{equation*}  
over $_{K}M_{G}(\mathbb{C})$. Given $K \subset G(\mathbb{A}_{f})$ neat in the sense of \cite{6}, there are good toroidal compactifications $_{K}\overline{M}_{G}$ which are projective rational over $E(G, X)$. For such $_{K}M_{G}$ there exists an extension of $\mathcal{W}(\Lambda)$ to a vector bundle over $_{K}\overline{M}_{G}$ denoted by $\overline{\mathcal{W}}(\Lambda)$. Note that over the complex points of  $_{K}M_{G}$, $\mathcal{W}(\Lambda)$ is a direct sum of finite copies of the holomorphic vector bundles considered previously in Section \ref{sec2}. Let $H_{!}^{i}(_{K}\overline{M}_{G}(\mathbb{C}), \overline{\mathcal{W}}(\Lambda))$ denote the inner cohomology of $_{K}\overline{M}_{G}(\mathbb{C})$ respect to $\overline{\mathcal{W}}(\Lambda)$ defined in (\ref{inner}). It is proved in \cite{4} that this cohomology spaces are independent of the choice of toroidal compactification. Let

\begin{equation*}
 H_{!}^{i}(M_{G}(\mathbb{C}), \overline{\mathcal{W}}(\Lambda)) =  \varinjlim_{K} H_{!}^{i}(_{K}\overline{M}_{G}(\mathbb{C}), \overline{\mathcal{W}}(\Lambda)).
\end{equation*}
It is clear that $H_{!}^{i}(\overline{X}_{N}, \overline{\mathcal{W}}(\Lambda)) \hookrightarrow H_{!}^{i}(M_{G}(\mathbb{C}), \overline{\mathcal{W}}(\Lambda))$ because $X_{N}$ was embedded into $M_{G}(\mathbb{C})$. Similarly, we can extend the local system $\mathbb{V}(\lambda)$ on $_{K}M_{G}(\mathbb{C})$ to a local system on $\overline{M}_{G}$ denoted by $\overline{\mathbb{V}}(\lambda)$. Let
  \begin{equation*}
 H_{!}^{i}(M_{G}(\mathbb{C}), \mathbb{V}(\lambda)) =  \varinjlim_{K} H_{!}^{i}(_{K}\overline{M}_{G}(\mathbb{C}), \overline{\mathbb{V}}(\lambda)).
\end{equation*}
Now $\lambda$ be a dominant weight. Because $_{K}M_{G}(\mathbb{C})$ is a finite disjoint union copies of $X_{N}$ and $H_{!}^{3}(X_{N}, \mathbb{V}(\lambda))$ is the only non-zero cohomology space, then $H_{!}^{3}(M_{G}(\mathbb{C}), \mathbb{V}(\lambda))$ is the only non-zero cohomology space (see \cite{1} , Cor. to Thm. 7, p. 84 and \cite{2}, p. 233-7). Moreover, it carries a Hodge structure of weight $3$ because each $H_{!}^{3}(_{K}\overline{M}_{G}(\mathbb{C}), \mathbb{V}(\lambda))$ 
has a Hodge structure of weight $3$. Finally, note that $H_{!}^{p, q}(X_{N}, \mathbb{V}(\lambda))$ is embedded into $ H_{!}^{p, q}(M_{G}(\mathbb{C}), \mathbb{V}(\lambda))$ because the Hodge structure is invariant under the direct limit.\\  

Let $\Lambda = (-m, -l, l+m)$, where $l, m \in \mathbb{Z}$, $l \geq m \geq 0 $. $\mathfrak{g}$ and $\mathfrak{k}$ denote the Lie algebras of $G(\mathbb{R})$ and $K_{G}$ respectively. Also, $\rho$ denotes the half sum of the positive roots. By the theory of $(\mathfrak{g}, K_{G})$-modules for the group $G(\mathbb{R})^{+}$ (see \cite{7}), for each $\Lambda$, there are four types of discrete series correspond to the four $\mathfrak{g}$-chambers in the positive $\mathfrak{k}$-chamber. This means that respect to our parametrization for $\Lambda$, we can parametrize these I-IV types of discrete series representations of $G(\mathbb{R})^{+}$ by Table \ref{table2}.

\begin{table}[H]
\centering
\begin{tabular}{ | l | l | l | l |  l | }
    \hline   
    Type & I & II & III & IV \\
    \hline
    $\Lambda$ & $(-m, -l, c)$ & $(m+2, -l, c)$ & $(l+3, 1-m, -c)$ & $(l+3, m+3, -c)$\\
    \hline
    $q_{\Lambda + \rho}$ & $3$ & $2$ & $1$ & $0$\\
    \hline
    $\Lambda + \rho$ & $(-m -1, -l - 2 , c)$ &  $(m+1, -l-2, c)$ & $(l+2, -m -1, -c)$ & $(l+2, m+1, -c)$\\
    \hline
    
     \end{tabular}
    \caption{Discrete series for $G(\mathbb{R})^{+}$}
  \label{table2}
\end{table}

\begin{remark}
Note that if $\pi_{\Lambda + \rho}$ is a discrete series representation of $G(\mathbb{R})^{+}$ \linebreak parametrized by $\Lambda + \rho = (a, b, c)$ (where $a+b=c$ mod 2), then its contragredient $\pi^{*}_{\Lambda + \rho}$ is parametrized by $(-b, -a, -c)$. 
The Table \ref{table2} is different from Table 2.2.1. in \cite{5}. Here we consider the contragredient of Table 2.2.2. in \cite{5} if $\lambda = (l, m)$. 
\end{remark}  

Now we are going to apply Theorem 1.2. in \cite{5} in the case $G = GSp(4)$. Fix $\lambda = (l, m)$ and the local system $\mathbb{V}_{l, m}$ on $\overline{M}_{G}$. Let $\sigma_{\Lambda}$ denote the irreducible finite dimensional representation of $K_{G}$ with highest weight $\Lambda$ for each type of $\Lambda$ in Table \ref{table2}. In \cite{5} (see p. 70), they show that if $l - m \geq 2$ and $m \geq 2$, then $\Lambda + \rho$'s in Table \ref{table2} are  sufficiently regular in the sense of Theorem 1.2. in \cite{5}. Therefore, if $l - m \geq 2$ and $m \geq 2$, Theorem 1.2. in \cite{5} implies that
\begin{equation}
\label{2}
H^{q_{\Lambda + \rho}}_{!}(M_{G}, \overline{\mathcal{W}}(\Lambda)) \cong Hom_{(\mathfrak{g}, K_{G})}(\pi_{\Lambda + \rho}^{*}, \mathcal{A}_{0}(G)), 
\end{equation}
where $q_{\Lambda + \rho} = \# \lbrace \alpha \in R_{n}^{-} \mid <\Lambda + \rho, \alpha> > 0 \rbrace$.\\

Let $(\pi, V_{\pi})$ be a cuspidal automorphic representation of $GSp(4, \mathbb{A})$ such that $V_{\pi} \subset \mathcal{A}_{0}(G)$. It is well-known that $\pi$ can be written as $\pi_{\infty} \otimes \pi_{fin}$, where $\pi_{\infty}$ is a $(\mathfrak{g}, K_{G})$-module of $G(\mathbb{R})^{+}$ and $\pi_{fin}$ is a $G(\mathbb{A}_{f})$-module. Also, $\mathcal{A}_{0}(G)$ can be decomposed into direct sum of irreducible cuspidal automorphic representations of $G(\mathbb{A})$. Therefore, the isomorphism in (\ref{2}) can be written as
\begin{equation}
\label{3}
H^{q_{\Lambda + \rho}}_{!}(M_{G}, \overline{\mathcal{W}}(\Lambda)) \cong \bigoplus_{\pi = \pi_{\infty}\otimes \pi_{fin}} m(\pi) \pi_{fin},
\end{equation}
where the sum runs over all irreducible cuspidal automorphic representations of $G(\mathbb{A})$ such that $\pi_{\infty}$ is a discrete series representation of type $\Lambda + \rho$.\\

\noindent Now it is clear that by comparing Table \ref{table1} and Table \ref{table2}, we have
\begin{proposition}
\label{p1}
Let $\lambda = (l, m)$ and $G = GSp(4)$. Fix the local system $\mathbb{V}_{l, m}$ on $M_{G}$. Suppose $l - m \geq 2$ and $l \geq 2$. Then, the $(p, q)$-terms of Hodge structure on $H_{!}^{3}(M_{G}, \mathbb{V}_{l, m})$ can be described under (\ref{3}) by
\begin{equation}
\label{4}
H_{!}^{p, q}(M_{G}, \mathbb{V}_{l, m}) = H_{!}^{q}(M_{G}, \overline{\mathcal{W}}(\Lambda)) \cong \bigoplus_{\pi = \pi_{\infty}\otimes \pi_{fin}} m(\pi)\pi_{fin}, 
\end{equation}
 where the sum runs over all irreducible cuspidal automorphic representations of $G(\mathbb{A})$ such that $\pi_{\infty}$ is a discrete series representation of $G(\mathbb{R})^{+}$ of type I-IV according to the type of $\Lambda$ in Table 1.\\
\end{proposition}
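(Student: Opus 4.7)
The plan is to prove Proposition \ref{p1} essentially by matching Table \ref{table1} against Table \ref{table2} and invoking the already established isomorphism (\ref{3}). The first step is to verify that the regularity hypothesis $l-m\geq 2$ and $m\geq 2$ (which is implicit since we need $m\geq 0$ for dominance, and sufficient regularity for Harris--Kudla's Theorem 1.2 in the form used in \cite{5}, p.~70) is enough to make each of the four weights $\Lambda+\rho$ appearing in Table \ref{table2} sufficiently regular. This is exactly the check made on p.~70 of \cite{5} in the case $G=GSp(4)$, so it will just need to be quoted.

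Next, for each of the four rows of Table \ref{table1} I would read off the pair $(p,q)$ and the bundle parameter $\Lambda$, and cross-reference it with the corresponding row of Table \ref{table2}. Concretely, the identifications are $(0,3)\leftrightarrow\Lambda=(-m,-l,c)$ with $q_{\Lambda+\rho}=3$; $(1,2)\leftrightarrow\Lambda=(m+2,-l,c)$ with $q_{\Lambda+\rho}=2$; $(2,1)\leftrightarrow\Lambda=(l+3,1-m,-c)$ with $q_{\Lambda+\rho}=1$; and $(3,0)\leftrightarrow\Lambda=(l+3,m+3,-c)$ with $q_{\Lambda+\rho}=0$. In each case the cohomological degree $q$ in Table \ref{table1} coincides with the integer $q_{\Lambda+\rho}$ in Table \ref{table2}, and the representation $(\sigma_{\Lambda},V_{\sigma_{\Lambda}})$ of $K_G$ giving the bundle $\mathcal{W}(\Lambda)$ is the one of highest weight $\Lambda$ appearing in that same row.

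With the matching in hand, one applies the isomorphism (\ref{2}), i.e. Theorem 1.2 of \cite{5} specialized to $G=GSp(4)$, to each of the four values of $\Lambda$: this yields
\begin{equation*}
H^{q_{\Lambda+\rho}}_{!}(M_G,\overline{\mathcal{W}}(\Lambda))\cong \mathrm{Hom}_{(\mathfrak{g},K_G)}(\pi^{*}_{\Lambda+\rho},\mathcal{A}_{0}(G)).
\end{equation*}
Decomposing $\mathcal{A}_{0}(G)$ into irreducible cuspidal automorphic representations $\pi=\pi_{\infty}\otimes\pi_{fin}$ then gives (\ref{3}) on the right-hand side, where only those $\pi$ with $\pi_{\infty}\cong \pi_{\Lambda+\rho}$ contribute (with multiplicity $m(\pi)$). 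Combining this with the identification of $H^{p,q}_{!}(M_G,\mathbb{V}_{l,m})$ with $H^{q}_{!}(M_G,\overline{\mathcal{W}}(\Lambda))$ from Table \ref{table1}, extended from $X_N$ to $M_G$ by the embedding $H^{q}_{!}(\overline{X}_{N},\overline{\mathcal{W}}(\Lambda))\hookrightarrow H^{q}_{!}(M_G,\overline{\mathcal{W}}(\Lambda))$ discussed before the proposition, yields (\ref{4}).

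The one genuinely delicate point, and the place where care is needed, is the contragredient: as the Remark emphasizes, Table \ref{table2} lists contragredients of Table 2.2.2 of \cite{5}, so one must be sure that the $\pi_{\Lambda+\rho}$ appearing in the sum of (\ref{4}) is the archimedean component dictated by the Harish-Chandra parameter in Table \ref{table2} and not its dual. I would verify this by writing out the correspondence $(a,b,c)\mapsto(-b,-a,-c)$ between the parameters of $\pi_{\Lambda+\rho}$ and $\pi^{*}_{\Lambda+\rho}$ and checking it against the $\Lambda+\rho$ entries in the last row of Table \ref{table2}. The rest of the argument is bookkeeping: sufficient regularity is inherited from the hypotheses on $(l,m)$, Faltings--Chai vanishing (\cite{1}, \cite{2}) ensures that only degree three contributes on the left, and the direct limit over level defining $M_G$ is compatible with the Hodge decomposition as noted before the proposition, so no further analytic input is required.
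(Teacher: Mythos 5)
Your proposal is correct and follows essentially the same route as the paper, which derives the proposition directly by comparing Table \ref{table1} with Table \ref{table2} and applying the isomorphisms (\ref{2})--(\ref{3}) obtained from Theorem 1.2 of \cite{5} for each of the four weights $\Lambda$. The extra care you take with the regularity hypothesis (correctly read as $m\geq 2$, despite the typo ``$l\geq 2$'' in the statement) and with the contragredient convention of Table \ref{table2} matches the remarks the paper itself makes around the proposition.
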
 

The Proposition \ref{p1} gives a description of $(p, q)$-terms of Hodge structure on $H_{!}^{3}(M_{G}, \mathbb{V}_{l, m})$. Moreover, we know that $H_{!}^{p, q}(X_{N}, \mathbb{V}_{l, m}) \hookrightarrow H_{!}^{p, q}(M_{G}, \mathbb{V}_{l, m}) $ for any non-negative integers $p$ and $q$ such that $p + q = 3$. Also, $G(\mathbb{A}_{f})$ acts on $H_{!}^{3}(M_{G}, \mathbb{V}_{l, m})$ and we have $H_{!}^{3}(X_{N}, \mathbb{V}_{l, m}) = [H_{!}^{3}(M_{G}, \mathbb{V}_{l, m})]^{K(N)}$. This means that $H_{!}^{3}(X_{N}, \mathbb{V}_{l, m})$ is invariant part of $H_{!}^{3}(M_{G}, \mathbb{V}_{l, m})$ under the action of $K(N)$. It is known that the Hodge structure is invariant under the action of $G(\mathbb{A}_{f})$ (see \cite{8}). Therefore, under the isomorphism in (\ref{4}), we have 
\begin{equation}
\label{5}
H_{!}^{p, q}(X_{N}, \mathbb{V}_{l, m}) = H_{!}^{q}(\overline{X}_{N}, \overline{\mathcal{W}}(\Lambda)) \cong \bigoplus_{\pi = \pi_{\infty}\otimes \pi_{fin}} m(\pi)(\pi_{fin})^{K(N)}, 
\end{equation}
 where the sum runs over all irreducible cuspidal automorphic representations of $G(\mathbb{A})$ such that $\pi_{\infty}$ is a discrete series representation of $G(\mathbb{R})^{+}$ of the type I-IV according to the type of $\Lambda$ in Table 1. Here $(\pi_{fin})^{K(N)}$ denotes the invariant elements of $V_{fin} = \bigotimes_{\text{$\upsilon$ is finite place}} V_{\upsilon}$ under the compact open subgroup $K(N)$.\\

In summary, If we fix the local system $\mathbb{V}(\lambda)$ on the $X_{N}$, where $\lambda = (l, m)$ and $l - m \geq 2$ and $m \geq 2$, then the $(p, q)$-term of Hodge filtration of inner cohomology space $H_{!}^{3}(X_{N}, \mathbb{V}(\lambda))$ is isomorphic with  $ \bigoplus_{\pi = \pi_{\infty}\otimes \pi_{fin}} m(\pi)(\pi_{fin})^{K(N)}$ under (\ref{5}) and $\Lambda$ is determined by $q = q_{\Lambda + \rho}$ according to Table \ref{table2}.\\
 
\noindent Therefore, we are going to study the strict endoscopic part via the identification presented in (\ref{5}). Briefly, it means that we will construct the desired cohomological class in $H_{!}^{p, q}(X_{N}, \mathbb{V}_{l, m})$ in the form of $(\pi_{fin})^{K(N)}$ such that $\pi_{\infty}$ of $\pi = \pi_{\infty}\otimes \pi_{fin}$ is the discrete series representation of appropriate Type I-IV.\\

\begin{remark}
\label{r2}
We close this section by this comment that the Type I and IV are called the holomorphic and anti-holomorphic discrete series representations of $G(\mathbb{R})^{+}$ with infinitesimal character $\chi_{\lambda + \rho}$ and denoted by $\pi_{\Lambda + \rho}^{(3, 0)}$ and$\pi_{\Lambda + \rho}^{(0, 3)}$ respectively. Analogously, the Type II and III are called the non-holomorphic discrete series representations of $G(\mathbb{R})^{+}$ with infinitesimal character $\chi_{\lambda + \rho}$ and denoted by $\pi_{\Lambda + \rho}^{(1, 2)}$ and $\pi_{\Lambda + \rho}^{(2, 1)}$ respectively.  
\end{remark}

\section{Discrete series representations of $GO(2, 2)(\mathbb{R})$}
\label{sec4}
In this section we review the parametrization of discrete series representation for the algebraic group $GO(2, 2)$ over $\mathbb{R}$ stated in \cite{5}. We are going to denote $GO(2, 2)$ by $H$. This parametrization will be used later to make an explicit correspondence between the discrete series representations of $H(\mathbb{R})^{+}$ with discrete series representations of $G(\mathbb{R})^{+}$ under the local theta correspondence respect to the dual reductive pair $(G(\mathbb{R}), H(\mathbb{R}))$, where $G(\mathbb{R}) = GSp(4, \mathbb{R})$ and $H(\mathbb{R}) = GO(2, 2)(\mathbb{R})$. This enables us later to construct strictly endoscopic cohomology classes by global theta correspondence respect to the dual reductive pair $(G(\mathbb{A}), H(\mathbb{A}))$.\\

We will frequently need to look at representation of $SL(2, \mathbb{R})$. Let $J$ be the matrix $\begin{pmatrix} 0 & 1\\
                                                      -1 & 0

\end{pmatrix} \in \mathfrak{sl}(2, \mathbb{R})$ and $\mathfrak{st} = \mathbb{C}.J$ be the subalgebra generated by $J$ in $\mathfrak{sl}(2, \mathbb{C})$. For $k \in \mathbb{Z}$ with $k \geq 2$, let $\sigma_{k}$ (resp. $\sigma_{-k}$) be the representation of $\mathfrak{st}$ such that $\sigma_{k}(J) = k$ (resp. $\sigma_{-k}(J) = -k$). $\pi_{k}$(resp. $\pi_{-k}$) denotes the discrete series representation of $SL(2, \mathbb{R})$ with highest $\mathfrak{st}$-type $\sigma_{k}$(resp. $\sigma_{-k}$). Let $\mathfrak{gl}(2, \mathbb{R})$ denote the Lie algebra of $GL(2, \mathbb{R})$ and $K_{GL(2)} = U(1)Z_{GL(2)}(\mathbb{R})$ in which $Z_{GL(2)}$ is the center of $GL(2)$. Note that to parametrize the discrete series representations of $GL(2, \mathbb{R})$ we need an extra parameter $c$ for the central character. Therefore, a discrete series representation of $GL(2, \mathbb{R})$ (or a $(\mathfrak{gl(2)}, K_{GL(2)})$-module) can be parametrized by $\pi_{k}(c)$, where the restriction of $\pi_{k}(c)$ to $SL(2, \mathbb{R})$ is $\pi_{k}$ such that it has the central character $c$.\\

Assume $B$ is the totally split quaternion algebra $Mat_{22}(\mathbb{Q})$ with the quadratic form is given by $-\det$. Therefore, the similitude orthogonal group $GO(B)$ is isomorphic to $GO(2, 2)$ over $\mathbb{Q}$. The algebraic group $H = GO(2, 2)$ is disconnected. If $\nu$ denotes the similitude multiplier character for $GO(2, 2)$, then there is a map
\begin{equation*}
GO(2, 2) \to \lbrace \pm 1\rbrace, \qquad h \mapsto \frac{\det(h)}{\nu(h)^{2}}.
\end{equation*} 
The kernel of this map is denoted by $GSO(2, 2)$ which is the connected component of $GO(2, 2)$. Also, $GO(2, 2) = GSO(2, 2)\rtimes <t>$, where $t$ acts on $B$ by $t.b = b^{t}$. $<t>$ denotes the group of order $2$ generated by $t$. It is easy to check that there are exact sequences 
\begin{equation}
\label{6}
1 \to \mathbb{G}_{m} \to GL(2) \times GL(2) \rtimes <t>  \stackrel{\rho}{\to} GO(2, 2) \to 1  
\end{equation}
\begin{equation}
\label{7}
1 \to \mathbb{G}_{m} \to GL(2) \times GL(2) \stackrel{\rho}{\to} GSO(2, 2) \to 1,
\end{equation}
where $\rho$ is defined by $\rho(g_{1}, g_{2})b = g_{1}bg_{2}^{-1}$ and $\rho(t)b = b^{t}$ for all $g_{1}, g_{2} \in GL(2)$ and $b \in B$.\\

Consider $H = GO(2, 2)$ and $\tilde{H} = GL(2) \times GL(2)$ as algebraic groups over $\mathbb{Q}$. Let $Z_{H}$ be the center of $H$, i.e., the group of scalar multiplications on $B$. Also, let $K_{H} = Z_{H}(\mathbb{R}).(SO(2)\times SO(2))$ and $K_{\tilde{H}} = Z.SO(2) \times Z.SO(2)$, where $Z$ is the group of scalar matrices in $GL(2, \mathbb{R})$. Finally, $\mathfrak{h}$ and $\tilde{\mathfrak{h}}$ denote the Lie algebras of $H$ and $\tilde{H}$ respectively. Note that $\tilde{\mathfrak{h}} = \mathfrak{gl}(2, \mathbb{R}) \times \mathfrak{gl}(2, \mathbb{R})$.\\

\noindent It is clear that discrete series representations of $H(\mathbb{R})^{+}$ can be considered as $(\mathfrak{h}, K_{H})$-modules, and we are able to parametrize $(\mathfrak{h}, K_{H})$-modules in terms of their pullbacks to $(\tilde{\mathfrak{h}}, K_{\tilde{H}})$-modules. If $\mathfrak{k}_{\tilde{H}}$ denotes the Lie algebra of $K_{\tilde{H}}$, then we have 
\begin{equation*}
(\mathfrak{k}_{\tilde{H}})_{\mathbb{C}} = \lbrace k(t_{1}, z_{1}; t_{2}, z_{2}) = \begin{pmatrix}
z_{1} & t_{1} \\
-t_{1} & z_{1}
\end{pmatrix} \times \begin{pmatrix}
z_{2} & t_{2} \\
-t_{2} & z_{2}
\end{pmatrix} \mid z_{j}, t_{j} \in \mathbb{C}\rbrace.
\end{equation*}
Let $(a, c_{1}; b, c_{2})$ be the character of $(\mathfrak{k}_{\tilde{H}})_{\mathbb{C}}$ defined by 
\begin{equation*}
(a, c_{1}; b, c_{2})(k(t_{1}, z_{1}; t_{2}, z_{2})) = ia.t_{1} + ib.t_{2} + c_{1}.z_{1} + c_{2}.z_{2}.
\end{equation*}
The representations of $K_{\tilde{H}}$ of interest to us then correspond to parameters $a, b, c_{1},$ and $c_{2} \in \mathbb{Z}$ with $a \equiv c_{1}$ mod $2$ and $b \equiv c_{2}$ mod $2$. Moreover, such a representation is pullback from $K_{H}$ if and only if $c_{1} = -c_{2}$ mod $2$. It is because of the homomorphism $\rho: K_{\tilde{H}} \to K_{H}$ in (\ref{6}) that was given by $(z_{1}k_{1}, z_{2}k_{2}) \mapsto z_{1}z_{2}^{-1}(k_{1}^{-1}k_{2}, k_{1}k_{2})$. This means that we need only one parameter $c = c_{1}$ for the central character. Therefore, we denote the parameter of a representation which is the pullback from $K_{H}$ by $(a, b; c)$.\\

Since $\mathfrak{\tilde{h}} = \mathfrak{gl}(2, \mathbb{R}) \times \mathfrak{gl}(2, \mathbb{R})$ the parameters for discrete series $\pi$ for $H(\mathbb{R})^{+}$ are given by $\pi_{a}(c) \otimes \pi_{b}(c)$ where $a, b, c \in \mathbb{Z}$, $a, b \geq 2$, and $a \equiv b \equiv c$ mod $2$. For simplicity we denote $\pi$ by $\pi_{a, b}(c)$ which is the $(\mathfrak{h}, K_{H})$-module with parameter $(a, b; c)$.\\

\begin{remark}
\label{r3}
Let $\mathbb{K}_{H} = Z_{H}(\mathbb{R})(O(2) \times O(2))$ and
$\mathbb{K}_{\tilde{H}} = Z.O(2) \times Z.O(2)$. We consider $(\mathfrak{h}, \mathbb{K}_{H})$-modules via their pullback to $(\tilde{\mathfrak{h}}, \mathbb{K}_{\tilde{H}})$-modules. We denote by $\pi(a, b; c)$ the irreducible $(\mathfrak{h}, \mathbb{K}_{H})$-module whose pullback to $(\tilde{\mathfrak{h}}, \mathbb{K}_{\tilde{H}})$ yields 
\begin{equation}
\label{8}
\pi(a, b; c) = \begin{cases} \pi_{a, b}(c) + \pi_{b, a}(c) + \pi_{-a, -b}(c) + \pi_{-b, -a}(c) \quad \text{if $a \neq b$} \\
\pi_{a, b}(c) + \pi_{-a, -b}(c) \quad \text{if $a = b$ }
\end{cases}
\end{equation}
upon restriction to $(\tilde{\mathfrak{h}}, K_{\tilde{H}})$. 
\end{remark} 

\section{Global theta correspondence for the similitude dual reductive pair $(GO(2, 2), GSp(4))$}
\label{sec5}
In this section we review briefly the global theta correspondence for the similitude dual reductive pair $(GO(2, 2), GSp(4))$. This case was studied in \cite{5} and \cite{9}. Moreover, we are going to state some results on the non-vanishing of theta lifting in this case where stated in \cite{11} and \cite{10}. This section provides us a tool to construct strict endoscopic classes in the next section.\\

We consider $(W, < , >)$ (resp. $(V, ( , ))$) as a symplectic (resp. orthogonal) vector space over $\mathbb{Q}$ with $\dim_{\mathbb{Q}} W = 4$ (resp. $\dim_{\mathbb{Q}} V = 4$). We assume that $V = B$ i.e., the totally split quaternion algebra $Mat_{22}(\mathbb{Q})$ with the quadratic form is given by $-\det$. Let $G = GSp(W)$ and $H = GO(V)$ denote the similitude groups such that $\eta$ and $\nu$ are the similitude characters of $G$ and $H$ respectively. If $\mathbb{W} = W \otimes_{\mathbb{}} V$, then $\mathbb{W}$ is a symplectic vector space respect to form $\ll , \gg = < , >\otimes ( , )$. Therefore, $(G, H)$ forms a dual reductive pair in the similitude group $GSp(\mathbb{W})$. If $g \in G$ and $h \in H$, then we define $i(g, h) \in GSp(\mathbb{W})$, by $(v \otimes w). i(g, h) = h^{-1}v \otimes wg$. Note that $\eta(i(g, h) = \eta(g)\nu(h)^{-1}$. Let

\begin{equation*}
R = \lbrace (g, h) \in G \times H \mid \eta(g) = \nu(h)\rbrace.
\end{equation*} 

It is clear that there is a natural homomorphism $i: R \to Sp(\mathbb{W})$. Moreover, if $G_{1} = Sp(W)$ and $H_{1} = O(V)$, then $G_{1} \times H_{1} \subset R$.\\

Fix an additive character $\psi$ of $\mathbb{A}$, which is trivial on $\mathbb{Q}$. Let $\omega = \omega_{\psi}$ denote the Weil representation of $Mp(\mathbb{W})$, where $Mp(\mathbb{W})$ denotes the metaplectic cover of $Sp(\mathbb{W})$. We consider the model of $\omega$ (as defined in \cite{11}) on the Schwartz-Bruhat space $\mathcal{S}(V(\mathbb{A})^{2})$ of $V(\mathbb{A})^{2}$. The extended Weil representation of $R(\mathbb{A})$ for the global case is also defined in the same way. Namely, $R(\mathbb{A})$ acts on the space $\mathcal{S}(V(\mathbb{A})^{2})$ in such a way that the restriction of this action to $O(2, 2)(\mathbb{A}) \times Sp(4, \mathbb{A})$ is the Weil representation $\omega$ of $Mp(\mathbb{W})$ (See \cite{5}, p. 81). We call this representation of $R(\mathbb{A})$ the extended Weil representation of $R(\mathbb{A} )$, which we also denote it by $\omega$.\\

For $(g, h) \in R(\mathbb{A})$ and $\varphi \in \mathcal{S}(V(\mathbb{A})^{2})$, we define the theta kernel by

\begin{equation*}
\theta(g, h; \varphi) = \sum_{x \in V(\mathbb{Q})^{2}} \omega(g, h)\varphi(x).
\end{equation*}               

Let $f \in \mathcal{A}_{0}(H)$ and $\varphi \in \mathcal{S}(V(\mathbb{A})^{2})$. We can consider the integral
\begin{equation}
\label{9}
\theta(f; \varphi)(g) = \int_{H_{1}(\mathbb{Q}) \backslash H_{1}(\mathbb{A})}\theta(g, h_{1}h; \varphi)f(h_{1}h)dh_{1}, 
\end{equation}
where $h \in H(\mathbb{A})$ is any element such that $\eta(g) = \nu(h)$ and the measure $dh_{1}$ is given as in \cite{5}. It is a basic argument to show that the above integral is absolutely convergent and independent of choice of $h$.\\

Now this is the Lemma 5.1.9. in \cite{5}.

\begin{lemma}
\label{le1}
With the notation as above,
\begin{itemize}
\item[(i)] $\theta(f; \varphi)$ is left invariant under $\lbrace \gamma \in G(\mathbb{Q}) \mid \eta(\gamma) = \nu(\gamma^{'}) \quad \text{ for some} \quad \gamma^{'} \in H(\mathbb{Q})\rbrace$.
\item[(ii)] If the central character of $f$ is $\chi$, then

\begin{equation}
\label{10}
\theta(f; \varphi)(zg) = \chi\chi_{V}^{2}(z)\theta(f; \varphi)(g),
\end{equation}
where $\chi_{V}^{2}(z) = (z, \det V)$ is the quadratic character of $\mathbb{A}^{\times}$ associated to $V$.
\end{itemize}

\end{lemma}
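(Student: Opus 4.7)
The plan is to prove both parts by direct manipulation of the integral defining $\theta(f;\varphi)$, using three standard ingredients: (a) the automorphy of $f$ under $H(\mathbb{Q})$, (b) the left $R(\mathbb{Q})$-invariance of the theta kernel $\theta(g,h;\varphi)$ coming from Poisson summation on $V(\mathbb{Q})^{2} \subset V(\mathbb{A})^{2}$, and (c) the known action of the center of $R(\mathbb{A})$ on $\mathcal{S}(V(\mathbb{A})^{2})$ via the extended Weil representation. Once these ingredients are in place, each statement reduces to a change of variables in the inner integral over $H_{1}(\mathbb{Q})\backslash H_{1}(\mathbb{A})$.

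For part (i), fix $\gamma \in G(\mathbb{Q})$ with $\eta(\gamma) = \nu(\gamma')$ for some $\gamma' \in H(\mathbb{Q})$. The first step is to choose the auxiliary element in the definition of $\theta(f;\varphi)(\gamma g)$ to be $h_{\gamma g} = \gamma' h$, where $h \in H(\mathbb{A})$ is any element satisfying $\nu(h) = \eta(g)$; the compatibility $\nu(\gamma' h) = \eta(\gamma g)$ is then automatic. Since $H_{1} = O(V)$ is normal in $H = GO(V)$, the substitution $h_{1} \mapsto \gamma' h_{1} (\gamma')^{-1}$ is a measure-preserving change of variables on $H_{1}(\mathbb{Q}) \backslash H_{1}(\mathbb{A})$. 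This rewrites the integrand as $\theta(\gamma g, \gamma' h_{1} h;\varphi)\, f(\gamma' h_{1} h)$. The factor $f(\gamma' h_{1} h)$ collapses to $f(h_{1} h)$ by left $H(\mathbb{Q})$-automorphy of $f$, and the theta kernel collapses to $\theta(g, h_{1} h;\varphi)$ because $(\gamma,\gamma') \in R(\mathbb{Q})$ acts trivially on $\theta$ by Poisson summation applied to $V(\mathbb{Q})^{2}$. Reassembling the integral yields $\theta(f;\varphi)(g)$, and the independence from the choice of $h$ already observed for the original definition ensures there is no ambiguity.

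For part (ii), fix a scalar $z \in Z_{G}(\mathbb{A})$ and choose $h_{zg} = z\cdot h$, using $\nu(zI_{H}) = z^{2} = \eta(zI_{G})$. Pushing the central $z \in Z_{H}(\mathbb{A})$ past $h_{1}$, the integrand becomes $\theta(zg, zh_{1} h;\varphi)\, f(zh_{1} h)$. The $f$-factor produces the scalar $\chi(z)$ by the assumption on the central character of $f$. It remains to show $\theta(zg, zh_{1}h;\varphi) = \chi_{V}^{2}(z)\,\theta(g, h_{1}h;\varphi)$. Although the pair $(zI_{G}, zI_{H})$ acts trivially on $\mathbb{W} = W \otimes V$ (hence lies in the kernel of $i\colon R \to Sp(\mathbb{W})$), its action on $\mathcal{S}(V(\mathbb{A})^{2})$ through the extended Weil representation is exactly the scalar $\chi_{V}^{2}(z)$; this reflects the standard normalization for the splitting of the metaplectic cover over the image of $R$, and is what produces the claimed twist on $\theta(f;\varphi)$.

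The genuinely non-routine step is the last one: pinning down the scalar by which the image of $(zI_{G}, zI_{H})$ acts under the extended Weil representation on $\mathcal{S}(V(\mathbb{A})^{2})$. Everything else is bookkeeping with the invariance of Haar measure on $H_{1}(\mathbb{A})$ under conjugation by $H(\mathbb{Q})$, the normality of $H_{1}$ in $H$, and Poisson summation. Since the extended Weil representation of $R(\mathbb{A})$ is constructed explicitly in \cite{5}, I would extract the required formula for the action of the center directly from that reference rather than rederive it from first principles.
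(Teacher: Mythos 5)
The paper offers no proof of this lemma at all --- it is quoted verbatim as Lemma 5.1.9 of \cite{5} --- so there is nothing internal to compare against; your argument is the standard one from that reference (choose the auxiliary element compatibly as $\gamma'h$, resp.\ $zh$; conjugate the $H_{1}$-variable by the rational element $\gamma'$, which is measure-preserving and legitimate since $H_{1}$ is normal in $H$; then invoke the left $R(\mathbb{Q})$-invariance of the theta kernel and the automorphy, resp.\ central character, of $f$), and it is sound. The only point you defer rather than prove is the crux of part (ii): the scalar by which $\omega(zI_{G},zI_{H})$ acts on $\mathcal{S}(V(\mathbb{A})^{2})$, which requires unwinding the normalization $\omega(g,h)\varphi(x)=|\nu(h)|^{-2}\,\omega_{1}(g_{1})\varphi(h^{-1}x)$ with $g_{1}=g\,\mathrm{diag}(I,\eta(g)^{-1}I)$ and the formula $\omega_{1}(m(A))\varphi(x)=\chi_{V}(\det A)|\det A|^{2}\varphi(xA)$; carrying this out for $A=zI_{2}$ produces exactly $\chi_{V}(z^{2})=\chi_{V}(z)^{2}$, which is what the (somewhat misleadingly glossed) symbol $\chi_{V}^{2}(z)$ in the statement denotes, so you should do that one-line computation rather than cite it.
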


\noindent The function $\theta(f; \varphi)$ can be considered as a function on $GSp(4, \mathbb{A})^{+} = \lbrace g \in G(\mathbb{A}) \mid \eta(g) \in \nu(H(\mathbb{A})) \rbrace$. By part (i) in the above lemma $\theta(f; \varphi)$ is invariant under $GSp(4, \mathbb{Q})^{+}$. Therefore $\theta(f; \varphi)$ is a function on $GSp(4, \mathbb{Q})^{+} \backslash GSp(4, \mathbb{A})^{+}$. We can extend this function to an automorphic form of $G(\mathbb{A})$ by insisting that it is left invariant under $G(\mathbb{Q})$ and zero outside the $G(\mathbb{Q})GSp(4, \mathbb{A})^{+}$. This means that we have a map $\theta$
\begin{equation}
\label{11}
\theta : \mathcal{A}_{0}(H) \to \mathcal{A}_{0}(G)
\end{equation}
\begin{align*}
f \longmapsto \theta(f; \varphi), \quad \text{where $\varphi \in \mathcal{S}(V(\mathbb{A})^{2})$}. 
\end{align*}
The above map is called the global theta correspondence for dual reductive pair $(H, G)$.\\

Now let $(\sigma, V_{\sigma})$ denote an irreducible cuspidal automorphic representation of $H(\mathbb{A})$ such that $V_{\sigma} \subset \mathcal{A}_{0}(H)$. We assume that $\sigma = \sigma_{\infty} \otimes \sigma_{fin}$. Let $\Theta(\sigma)$ be the space that is generated by the automorphic forms $\theta(f; \varphi)$ for all $f \in V_{\sigma}$ and   $\varphi \in \mathcal{S}(V(\mathbb{A})^{2})$. $G(\mathbb{A})$ acts on $\Theta(\sigma)$ by right translation. If $\Theta(\sigma)$ is the space of non-zero cusp forms, then its irreducible constituent provides a cuspidal representation of $G(\mathbb{A})$ denoted by $\Pi_{\sigma}$. If we write $\sigma = \otimes \sigma_{\upsilon}$ and $\Pi_{\sigma} = \otimes \Pi_{\upsilon}$ over places, then $\sigma_{\upsilon}^{*}$ is corresponded to $\Pi_{\upsilon}$ under the local theta correspondence for the dual reductive pair $(H, G)$ over the local field $\mathbb{Q}_{\upsilon}$. Explicitly, we can write $\Pi_{\sigma} = \otimes \theta(\sigma_{\upsilon}^{*}) = \otimes \theta(\sigma_{\upsilon})^{*}$, where $\theta$ denotes the local theta correspondence. This is a non-trivial result for the dual reductive pair $(H, G)$ is proved by S. Takeda in \cite{10}. This means that the global theta lift of $\sigma$ is determined by all of its local theta lifts.

\begin{remark}
\label{r4}
Note that in general if $\Theta(\sigma)$ is the spaces of non-zero cusp forms, then the statement $\Pi_{\sigma} = \otimes \theta(\sigma_{\upsilon}^{*}) = \otimes \theta(\sigma_{\upsilon})^{*}$ satisfies for a general dual reductive pair with assumption that $\sigma_{\upsilon}$ is tempered representation at $\upsilon \mid 2$. This is because of lack of How duality at $\upsilon \mid 2$. However, for the dual reductive pair $(GO(2, 2), GSp(4))$, there is no longer necessary to consider the temperedness assumption that is proved by S. Takeda in \cite{10}.
\end{remark} 

Since we are interested in constructing strict endoscopic classes, we state Theorem 5.2.1. in \cite{5} which provides us an explicit parametric description of the global theta lift from $H(\mathbb{A})$ to $G(\mathbb{A})$ in our case of interest.

\begin{theorem}
\label{th1}
Suppose $f \in \mathcal{A}_{0}(H)$ generates $(\mathfrak{h}, \mathbb{K}_{H})$-module isomorphic to $\pi_{H}^{*}$ with $\pi_{H} = \pi(l + m + 4, l - m + 2; c )$ (resp. $\pi(l + m + 4, m - l -2; c)$) with $l - m \geq 2$ and $m \geq 2$. Suppose that $\varphi \in \mathcal{S}(V(\mathbb{A})^{2})$ is such that $\theta(f; \varphi) \neq 0$. Then $\theta(f; \varphi) \in \mathcal{A}_{0}(G)$ and generates a $(\mathfrak{g}, K_{G})$-module isomorphic to the discrete series representation $\pi_{\Lambda + \rho}$, where $\Lambda = (m + 2, -l, c)$ (resp. $(l + 3, 1 - m, c)$).  
\end{theorem}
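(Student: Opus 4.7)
The plan is to reduce the statement to an archimedean representation-theoretic computation combined with a cuspidality argument, each of which can be handled independently. Since the adelic Weil representation factors as $\omega = \bigotimes_\upsilon \omega_\upsilon$ and the global theta kernel respects this factorization, the archimedean $(\mathfrak{g}, K_G)$-type of any cuspidal constituent of $\theta(f;\varphi)$ is determined entirely by the archimedean local theta lift applied to the archimedean component of $\pi_H^*$; the finite places contribute only to the finite part.

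The core of the proof is then the archimedean computation for the similitude dual pair $(GO(2,2)(\mathbb{R}), GSp(4,\mathbb{R}))$. I would work in the Fock model of $\omega_\infty$ on $\mathcal{S}(V(\mathbb{R})^2)$ and decompose it under joint $(K_G, \mathbb{K}_H)$-harmonics. By Remark \ref{r3}, the pullback of $\pi_H^*$ with $\pi_H = \pi(l+m+4, l-m+2; c)$ to $(\tilde{\mathfrak{h}}, K_{\tilde{H}})$ is a sum of four pieces of the form $\pi_{\pm a, \pm b}(-c)$ with $a = l+m+4$ and $b = l-m+2$, each carrying a distinguished minimal $K_{\tilde{H}}$-type. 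Tracking these minimal types through the Howe correspondence (a combinatorial matching on joint harmonics), the non-zero outputs all sit in a single $(\mathfrak{g}, K_G)$-module whose minimal $K_G$-type matches that of the discrete series $\pi_{\Lambda+\rho}$ with $\Lambda + \rho = (m+1, -l-2, c)$, namely the type II entry of Table \ref{table2} corresponding to $\Lambda = (m+2, -l, c)$. The second case $\pi_H = \pi(l+m+4, m-l-2; c)$ is structurally identical with the second parameter negated, and produces the type III target with $\Lambda = (l+3, 1-m, c)$.

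For cuspidality of $\theta(f;\varphi)$, granted the non-vanishing hypothesis and that $f$ is itself cuspidal, I would compute the constant term of $\theta(f;\varphi)$ along each proper parabolic $P \subset G$ by unfolding the theta kernel along the unipotent radical of $P$. Each such constant term rewrites as a smaller theta integral; under the regularity assumptions $l - m \geq 2$ and $m \geq 2$, the infinitesimal character of $\pi_H$ lies outside the range in which these smaller lifts can be non-zero, so every constant term vanishes. Equivalently, since the archimedean component of $\theta(f;\varphi)$ is forced by the previous step to be a discrete series, hence tempered and square-integrable, the lift cannot live in the residual or continuous spectrum of $G(\mathbb{A})$ and must therefore be cuspidal.

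The main obstacle I anticipate is the archimedean $K$-type bookkeeping in the second step: tracking contragredients, the central-character twist by $c$, and the sign flip between $l - m + 2$ and $m - l - 2$ that distinguishes the type II from the type III output. Everything else is either standard (the unfolding argument for cuspidality) or a direct consequence of local-global compatibility for this dual pair, which is precisely Takeda's theorem \cite{10} invoked in Remark \ref{r4}. In essence, the argument is a transcription of the proof of Theorem 5.2.1 of \cite{5} into the conventions of the present paper.
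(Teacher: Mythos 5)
The paper does not actually prove this statement: it is quoted verbatim as Theorem 5.2.1 of \cite{5} (Harris--Kudla), and the text supplies no argument of its own. Your sketch is therefore best judged against the proof in that reference, and on the whole it reconstructs it faithfully: the factorization of the extended Weil representation over places, the identification of the archimedean output by matching minimal $K$-types in the joint harmonics of the Fock model for the pair $(GO(2,2)(\mathbb{R}), GSp(4,\mathbb{R}))$, and cuspidality via vanishing of constant terms along proper parabolics, which unfold to theta integrals for smaller members of the symplectic tower and vanish because the archimedean lift of a discrete series with sufficiently regular parameter to the smaller group is zero. Note that Takeda's local--global theorem from Remark \ref{r4} is not actually needed here; that result governs the non-vanishing criterion and the factorization of the cuspidal constituent $\Pi_{\sigma}$, whereas Theorem \ref{th1} only concerns the $(\mathfrak{g},K_G)$-module generated by a single non-zero $\theta(f;\varphi)$.

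One step of your argument is not valid as stated: the ``equivalently'' clause in the cuspidality paragraph. Knowing that $\theta(f;\varphi)$ generates a discrete series at the archimedean place does not by itself force the global automorphic form to be cuspidal; an automorphic form with tempered, square-integrable archimedean component can still have non-vanishing constant terms (and $\theta(f;\varphi)$ is not known a priori to lie in the discrete spectrum, so spectral classification cannot be invoked). The constant-term unfolding you give first --- the Rallis tower argument, with vanishing of the lower lift guaranteed by the regularity hypotheses $l-m\geq 2$, $m\geq 2$ --- is the argument that actually closes the proof, and you should rely on it alone. With that caveat, and modulo the admittedly delicate bookkeeping of contragredients and the central parameter $c$ in the archimedean correspondence (where the paper itself is not entirely consistent between the statement of Theorem \ref{th1} and Table \ref{table2}, which records $-c$ for Type III), the proposal is sound.
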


\noindent We can restate this theorem in terms of cuspidal automorphic representation. Let $(\sigma, V_{\sigma})$ be an irreducible cuspidal automorphic representation of $H(\mathbb{A})$ such that $V_{\sigma} \subset \mathcal{A}_{0}(H)$. We assume $\sigma$ is written (under isomorphism) as $\sigma_{\infty} \otimes \sigma_{fin}$. If $f \in V_{\sigma}$ is a non-zero element, then $\Theta(\sigma)$ is generated by $\theta(f; \varphi)$ for all $\varphi \in \mathcal{S}(V(\mathbb{A})^{2})$. We assume that $\Theta(\sigma) \neq 0$ and $\Pi_{\sigma}$ denotes the irreducible constituent of $\Theta(\sigma)$ as before. The Theorem \ref{th1} says that if $\sigma_{\infty}$ is isomorphic to $\pi_{H}^{*}$ with  $\pi_{H} = \pi(l + m + 4, l - m + 2; c )$ (resp. $\pi(l + m + 4, m - l -2; c)$) with $l - m \geq 2$ and $m \geq 2$, then $\Theta(\sigma)$ is a space of non-zero cusp forms since $\theta(f; \varphi)$ are cusp forms. Moreover, $\Pi_{\sigma} = \otimes \theta(\sigma_{\upsilon}^{*}) = \otimes \theta(\sigma_{\upsilon})^{*}$ such that $\Pi_{\sigma, \infty}$ is isomorphic to the discrete series representation $\pi_{\Lambda + \rho}$, where $\Lambda = (m + 2, -l, c)$ (resp. $(l + 3, 1 - m, c)$). Note that these are the non-holomorphic discrete series representations of $GSp(4, \mathbb{R})^{+}$ denoted by $\pi_{\Lambda + \rho}^{(1, 2)}$ and $\pi_{\Lambda + \rho}^{(2, 1)}$. We summarize the above discussion as the following corollary.
\begin{corollary}
\label{c1}
Let $(\sigma, V_{\sigma})$ be an an irreducible cuspidal atuomorphic representation of $H(\mathbb{A})$ such that $V_{\sigma} \subset \mathcal{A}_{0}(H)$. Moreover, $\sigma_{\infty}$ is isomorphic to $\pi_{H}^{*}$ with  $\pi_{H} = \pi(l + m + 4, l - m + 2; c )$ (resp. $\pi(l + m + 4, m - l -2; c)$) with $l - m \geq 2$ and $m \geq 2$, where $\sigma = \sigma_{\infty} \otimes \sigma_{fin}$. Assume that $\Theta(\sigma) \neq 0$. Then, $\Pi_{\sigma}$ is an irreducible cuspidal automorphic representation of $G(\mathbb{A})$ such that $\Pi_{\sigma, \infty}$ is isomorphic to the discrete series representation $\pi_{\Lambda + \rho}^{(1, 2)}$ (resp.  $\pi_{\Lambda + \rho}^{(2, 1)}$), where $\Pi_{\sigma} = \Pi_{\sigma, \infty} \otimes \Pi_{\sigma, fin}$. 
\end{corollary}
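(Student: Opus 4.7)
The plan is to show that Corollary \ref{c1} is just the reformulation of Theorem \ref{th1} in the language of cuspidal automorphic representations, once one combines it with the global-to-local principle for the theta correspondence on $(GO(2,2), GSp(4))$ recalled in Remark \ref{r4}.

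First I would pick a non-zero vector $f \in V_\sigma$ whose archimedean type is the one specified by $\sigma_\infty \cong \pi_H^*$, with $\pi_H = \pi(l+m+4, l-m+2; c)$ in the first case and $\pi_H = \pi(l+m+4, m-l-2; c)$ in the second. By hypothesis the $(\mathfrak{h}, \mathbb{K}_H)$-module generated by such an $f$ inside $\mathcal{A}_0(H)$ is isomorphic to $\pi_H^*$, so Theorem \ref{th1} applies verbatim: for any Schwartz--Bruhat function $\varphi \in \mathcal{S}(V(\mathbb{A})^2)$ with $\theta(f;\varphi) \neq 0$, the theta lift $\theta(f;\varphi)$ lies in $\mathcal{A}_0(G)$ and generates the discrete series $\pi_{\Lambda+\rho}$ with $\Lambda = (m+2, -l, c)$ in the first case and $\Lambda = (l+3, 1-m, c)$ in the second.

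Next, the assumption $\Theta(\sigma) \neq 0$ means that at least one such pair $(f, \varphi)$ gives a non-zero automorphic form, so the preceding step shows $\Theta(\sigma)$ is actually a space of non-zero cusp forms. At this point I invoke the result recorded in Remark \ref{r4}: by Takeda's theorem in \cite{10}, for the dual reductive pair $(GO(2,2), GSp(4))$ the global theta lift $\Theta(\sigma)$ contains a unique irreducible constituent $\Pi_\sigma$, and it factorizes over places as the restricted tensor product
\begin{equation*}
\Pi_\sigma \;=\; \bigotimes_v \theta(\sigma_v^*) \;=\; \bigotimes_v \theta(\sigma_v)^*,
\end{equation*}
without any temperedness requirement at the places above $2$. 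This gives both irreducibility and cuspidality of $\Pi_\sigma$.

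Finally, the archimedean component is $\Pi_{\sigma,\infty} = \theta(\sigma_\infty^*) = \theta(\pi_H)^*$, and Theorem \ref{th1} identifies this local lift as the discrete series $\pi_{\Lambda+\rho}$ with the $\Lambda$ listed above, which by Remark \ref{r2} and the parametrization of Table \ref{table2} is precisely the non-holomorphic discrete series $\pi_{\Lambda+\rho}^{(1,2)}$ (resp.\ $\pi_{\Lambda+\rho}^{(2,1)}$). The only genuinely non-routine ingredient in the whole argument is Takeda's strengthening of Howe duality at places dividing $2$; without it, one would need to impose a temperedness hypothesis on $\sigma_v$ for $v \mid 2$ in order to package the local lifts into a well-defined irreducible global $\Pi_\sigma$. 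Granting this, the corollary follows directly by assembling Theorem \ref{th1} with the local-global compatibility of the theta correspondence.
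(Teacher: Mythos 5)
Your proposal is correct and follows essentially the same route the paper takes: it applies Theorem \ref{th1} to a generator of $\sigma_\infty \cong \pi_H^*$ to see that $\Theta(\sigma)$ consists of cusp forms of the right archimedean type, and then invokes Takeda's local-global factorization $\Pi_\sigma = \otimes_v \theta(\sigma_v^*)$ from Remark \ref{r4} to identify $\Pi_{\sigma,\infty}$ with the non-holomorphic discrete series $\pi_{\Lambda+\rho}^{(1,2)}$ (resp.\ $\pi_{\Lambda+\rho}^{(2,1)}$). This is exactly the argument the paper gives in the discussion immediately preceding the corollary.
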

Note that we assume $\Theta(\sigma) \neq 0$. The non-vanishing of $\Theta(\sigma)$ is a hard problem for a general dual reductive pair. However, in our case of interest $(H, G)$, we have an explicit answer to the non-vanishing problem. To state the answer we need first to describe the irreducible admissible representations of $H$ over a local field $\mathbb{Q}_{\upsilon}$.\\

Let assume $G = GSp(4)$ and $H = GO(2, 2)$. Also $H^{\circ}$ denotes the subgroup $GSO(2, 2)$ of $H$. Note that $H = H^{\circ} \rtimes <t>$, according to (\ref{6}). We have an isomorphism $c: H^{\circ} \to H^{\circ}$ defined by $c(h) = tht$. Let $(\pi, V_{\pi})$ be an irreducible admissible representation of $H^{\circ}$ over the local field $\mathbb{Q}_{\upsilon}$. We define $\pi^{c}$ by taking $V_{\pi^{c}} = V_{\pi}$ and by letting $\pi^{c}(h)(v) = \pi(c(h))(v)$ for all $h \in H^{\circ}$ and $v \in V_{\pi}$. Then we have 
\begin{itemize}
\item If $\pi \ncong \pi^{c}$, then $\text{Ind}_{H^{\circ}}^{H}$ is irreducible, and we denote it by $\pi^{+}$.
\item If $\pi \cong \pi^{c}$, then $\text{Ind}_{H^{\circ}}^{H}$ is reducible. Moreover, $\text{Ind}_{H^{\circ}}^{H} = \pi^{+} \oplus \pi^{-}$ where $\pi^{\pm}$ are irreducible admissible representations of $H$. Explicitly, $\pi^{+}\mid_{H^{\circ}} \cong \pi^{-}\mid_{H^{\circ}} \cong \pi$ and $t$ acts on $\pi^{\pm}$ via a linear operator $\theta^{\pm}$ with the property that $(\theta^{\pm})^{2} = \text{Id}$ and $\theta^{\pm} \circ h = c(h) \circ \theta^{\pm}$ for any $h \in H^{\circ}$. 
\end{itemize}

\begin{remark}
\label{r5}
 The exact sequence (\ref{7}) over the local field $\mathbb{Q}_{\upsilon}$ is 
\begin{equation*}
1 \to \mathbb{Q}_{\upsilon}^{\times} \to GL(2, \mathbb{Q}_{\upsilon}) \times GL(2, \mathbb{Q}_{\upsilon}) \stackrel{\rho}{\to} GSO(2, 2)(\mathbb{Q}_{\upsilon}) \to 1.
\end{equation*}
Therefore, it is clear that an irreducible admissible representation $\pi$ of $H^{\circ}(\mathbb{Q}_{\upsilon})$, via pullback to $GL(2, \mathbb{Q}_{\upsilon}) \times GL(2, \mathbb{Q}_{\upsilon})$, can be considered as $\tau_{1} \otimes \tau_{2}$, where $\tau_{i}$ is an irreducible admissible representation of $GL(2, \mathbb{Q}_{\upsilon})$ and $\omega_{\tau_{1}}\omega_{\tau_{2}} = 1$. Here $\omega_{\tau_{i}}$ denotes the central character of $\tau_{i}$. Since the above argument, we denote the admissible representation $\pi$ of $H^{\circ}(\mathbb{Q}_{\upsilon})$ by $\pi = \pi(\tau_{1}, \tau_{2})$, where $\omega_{\tau_{1}}\omega_{\tau_{2}} = 1$. Moreover, if $\pi = \pi(\tau_{1}, \tau_{2})$, then $\pi^{c} = \pi(\tau_{2}^{\vee}, \tau_{1}^{\vee})$. Here $\tau^{\vee}$ denotes the representation of $GL(2, \mathbb{Q}_{\upsilon})$ such that $V_{\tau^{\vee}} = V_{\tau}$ and $\tau^{\vee}(g)v = \tau((g^{-1})^{t})v$ for all $g \in GL(2, \mathbb{Q}_{\upsilon})$ and $v \in V_{\tau}$.
\end{remark}

Now we describe the global cuspidal automorphic representations of $H(\mathbb{A})$ in terms of global cuspidal automorphic representations of $H^{\circ}(\mathbb{A})$. Note that if $\pi$ is an irreducible cuspidal automorphic representation of $H^{\circ}(\mathbb{A})$, then  we define $\pi^{c}$ by taking $V_{\pi^{c}} = V_{\pi}$ and by letting $\pi^{c}(h)(v) = \pi(c(h))(v)$ for all $h \in H^{\circ}(\mathbb{A})$ and $v \in V_{\pi}$.\\

\noindent Let $\sigma$ be an irreducible cuspidal automorphic representation of $H(\mathbb{A})$. We consider the restriction of $\sigma$ to $H^{\circ}(\mathbb{A})$ denoted by $\sigma \mid_{H^{\circ}(\mathbb{A})}$. Therefore, $V_{\sigma \mid_{H^{\circ}(\mathbb{A})}} = \lbrace f \mid_{H^{\circ}(\mathbb{A})} \mid f \in V_{\sigma} \rbrace$. We are going to denote the $\sigma \mid_{H^{\circ}(\mathbb{A})}$ by $\sigma^{\circ}$. Following \cite{12} (Lemma 2, p. 381), we have
\begin{lemma}
\label{le2}
If $\sigma$ is an irreducible cuspidal automorphic representation of $H(\mathbb{A})$, then there is an irreducible cuspidal automorphic representation $\pi$ of $H^{\circ}(\mathbb{A})$ such that either $\sigma^{\circ} = \pi$ with $\pi = \pi^{c} $ or $\sigma^{\circ} = \pi \oplus \pi^{c}$. 
\end{lemma}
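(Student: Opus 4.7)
The plan is to reduce this to a standard Clifford-theory dichotomy for the normal subgroup $H^{\circ} \subset H$ of index two, carried out in the automorphic setting. First I would pick any irreducible $H^{\circ}(\mathbb{A})$-subrepresentation $\pi$ of $\sigma^{\circ}$; such a constituent exists because $\sigma$ is cuspidal and the cuspidal spectrum of $H^{\circ}(\mathbb{A})$ is completely reducible. Since $H = H^{\circ} \rtimes \langle t \rangle$ and $t$ acts on $V_{\sigma}$ by $\sigma(t)$, the subspace $\sigma(t) V_{\pi}$ is again $H^{\circ}(\mathbb{A})$-stable, and the identity $\sigma(h) \sigma(t) = \sigma(t) \sigma(c(h))$ (valid for $h \in H^{\circ}(\mathbb{A})$) shows that $\sigma(t) V_{\pi}$, as an $H^{\circ}(\mathbb{A})$-module, is isomorphic to $\pi^{c}$.

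Next I would use irreducibility of $\sigma$ as an $H(\mathbb{A})$-module. The sum $V_{\pi} + \sigma(t) V_{\pi}$ is $H(\mathbb{A})$-stable, hence equal to $V_{\sigma}$. From here there are exactly two mutually exclusive possibilities. Either $\sigma(t) V_{\pi} = V_{\pi}$, in which case $\pi \cong \pi^{c}$ (via the restriction of $\sigma(t)$) and $\sigma^{\circ} = \pi$; or $\sigma(t) V_{\pi} \neq V_{\pi}$, in which case the intersection $V_{\pi} \cap \sigma(t)V_{\pi}$ is an $H^{\circ}(\mathbb{A})$-stable proper subspace of the irreducible $V_{\pi}$, hence zero, yielding the internal direct sum $\sigma^{\circ} = \pi \oplus \pi^{c}$.

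Finally I would verify that $\pi$ (and therefore $\pi^{c}$) is cuspidal as an automorphic representation of $H^{\circ}(\mathbb{A})$. This is because the unipotent radical of every parabolic subgroup of $H^{\circ}$ is contained in $H^{\circ}$ itself and agrees with the unipotent radical of a parabolic of $H$ (parabolics of $H^{\circ}$ are intersections of parabolics of $H$ with $H^{\circ}$). Consequently, the vanishing of the constant-term integrals $\int_{N(\mathbb{Q}) \backslash N(\mathbb{A})} f(ng)\,dn$ for $f \in V_{\sigma}$ immediately implies the vanishing of the corresponding integrals for every element of $V_{\pi} \subset V_{\sigma^{\circ}}$, so $\pi$ lies in the cuspidal spectrum of $H^{\circ}(\mathbb{A})$.

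The only real subtlety is the second step, namely ruling out the a priori possibility that $\sigma^{\circ}$ decomposes into more than two irreducible pieces or that the two pieces are non-isomorphic but not related by the $c$-twist. Both are excluded by the argument above: the identity $V_{\sigma} = V_{\pi} + \sigma(t) V_{\pi}$ bounds the length of $\sigma^{\circ}$ by two, and the computation of the $H^{\circ}(\mathbb{A})$-action on $\sigma(t) V_{\pi}$ forces the second constituent, when it exists, to be exactly $\pi^{c}$. This is the automorphic-representation analogue of the classical Mackey/Clifford dichotomy for index-two inclusions, and no further input beyond semisimplicity of the cuspidal spectrum is required.
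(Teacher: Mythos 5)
Your argument is the classical Mackey--Clifford dichotomy for an index-two normal subgroup, but the inclusion $H^{\circ}(\mathbb{A})\subset H(\mathbb{A})$ is \emph{not} of index two, and this is exactly where the proof breaks. The sign $h\mapsto \det(h)/\nu(h)^{2}$ can be prescribed independently at every place (the element $t$ lies in $H(\mathbb{Z}_{\upsilon})$ for all $\upsilon$), so $H(\mathbb{A})/H^{\circ}(\mathbb{A})$ is an infinite product of copies of $\mathbb{Z}/2$, and the subgroup generated by $H^{\circ}(\mathbb{A})$ together with the single global element $t\in H(\mathbb{Q})$ is only the proper subgroup $H^{\circ}(\mathbb{A})\cup tH^{\circ}(\mathbb{A})$. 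Consequently $V_{\pi}+\sigma(t)V_{\pi}$ is stable under that proper subgroup but not under $H(\mathbb{A})$: an element equal to $t$ at one place $\upsilon_{0}$ and to the identity elsewhere carries $V_{\pi}$ to a copy of the representation obtained from $\pi$ by applying $c$ at the single place $\upsilon_{0}$, which in general is isomorphic to neither $\pi$ nor $\pi^{c}$. So you cannot conclude $V_{\pi}+\sigma(t)V_{\pi}=V_{\sigma}$, and the length bound of two does not follow; indeed the \emph{abstract} restriction $\sigma|_{H^{\circ}(\mathbb{A})}=\otimes_{\upsilon}(\sigma_{\upsilon}|_{H^{\circ}(\mathbb{Q}_{\upsilon})})$ typically has many more than two constituents, since $\sigma_{\upsilon}|_{H^{\circ}(\mathbb{Q}_{\upsilon})}$ is reducible at infinitely many places.

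A second, related issue is that the lemma is not about the abstract restriction at all: the paper defines $\sigma^{\circ}$ by restricting the \emph{domain} of the automorphic functions, $V_{\sigma^{\circ}}=\{f|_{H^{\circ}(\mathbb{A})}\}$, and since $H(\mathbb{Q})H^{\circ}(\mathbb{A})\subsetneq H(\mathbb{A})$ this restriction map has a nontrivial kernel in general; the dichotomy holds only for this quotient. The proof in the cited source (\cite{12}, Lemma 2) proceeds by restricting in two stages through the intermediate group $H^{+}=H(\mathbb{Q})H^{\circ}(\mathbb{A})$, where $H^{+}/H^{\circ}(\mathbb{A})\cong H(\mathbb{Q})/H^{\circ}(\mathbb{Q})\cong\mathbb{Z}/2$ and your index-two argument legitimately applies, and then controls which constituents $\otimes_{\upsilon}\pi_{\upsilon}^{\delta(\upsilon)}$ can actually occur in the space of cusp forms on $H^{\circ}(\mathbb{Q})\backslash H^{\circ}(\mathbb{A})$ by invoking multiplicity one and strong multiplicity one for $GL(2)\times GL(2)$ through the exact sequence $1\to\mathbb{G}_{m}\to GL(2)\times GL(2)\to H^{\circ}\to 1$. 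That global automorphic input is indispensable here and is absent from your argument; your computation of the action on $\sigma(t)V_{\pi}$ and your verification of cuspidality via constant terms are fine, but semisimplicity of the cuspidal spectrum alone does not suffice.
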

Thus we obtain a map from cuspidal automorphic representations of $H(\mathbb{A})$ to cuspidal automorphic representations of $H^{\circ}(\mathbb{A})$ modulo the action of $\lbrace 1, c \rbrace $.\\

\begin{remark}
\label{r6}
In \cite{12} the authors consider the case that $\pi$ is a cuspidal representation of $GO(1, 3)(\mathbb{A})$. However, exactly a same argument will prove the results if $\pi$ is a cuspidal automorphic representation of either $GO(4)(\mathbb{A})$ or $GO(2, 2)(\mathbb{A})$.
\end{remark}

Now we want to find the fibre above a fix cuspidal atuomorphic representation $\pi$ of $H^{\circ}(\mathbb{A})$. We define $\tilde{\pi}$ be sum of the cuspidal automorphic representations of $H(\mathbb{A})$ lying above $\pi$ under the above map. This means that $\tilde{\pi} = \oplus_{i} \sigma_{i}$ such that $\sigma_{i}$ is a cuspidal automorphic representation of $H(\mathbb{A})$ which either $\sigma_{i}^{\circ} = \pi$ (if $\pi = \pi^{c}$), or $\pi \oplus \pi^{c}$ (if $\pi \neq \pi^{c}$).\\

\noindent There are two different cases: 

\begin{itemize}
\item[(i)] $\pi \ncong \pi^{c}$ (See \cite{12}, p. 382-383). 
\begin{proposition}
\label{p2}
Assume $\pi \ncong \pi^{c}$. Then
\begin{equation}
\label{13}
\tilde{\pi} = \bigoplus_{\delta} \otimes_{\upsilon} \pi_{\upsilon}^{\delta(\upsilon)},
\end{equation}
where $\delta$ runs over all the maps from the set of all places of $\mathbb{Q}$ to $\lbrace \pm \rbrace$ with the property that $\delta(\upsilon) = + $ for almost all places of $\mathbb{Q}$, and $\delta(\upsilon) = +$ if $\pi_{\upsilon} \ncong \pi_{\upsilon}^{c}$. Moreover each $\otimes_{\upsilon} \pi_{\upsilon}^{\delta(\upsilon)}$ is isomorphic to a cuspidal automorphic representation of $H(\mathbb{A})$.
\end{proposition}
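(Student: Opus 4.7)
The plan is to combine the local Clifford-theoretic dichotomy recalled just before the proposition with an explicit realization of $V_\pi \oplus V_{\pi^c}$ inside the cuspidal spectrum of $H(\mathbb{A})$. Since $\pi \ncong \pi^c$ globally, Lemma \ref{le2} forces $\sigma^\circ = \pi \oplus \pi^c$ for every $\sigma$ appearing in $\tilde{\pi}$. Writing $\sigma = \otimes_\upsilon \sigma_\upsilon$ and restricting each factor to $H^\circ(\mathbb{Q}_\upsilon)$, the component $\sigma_\upsilon$ must extend $\pi_\upsilon$; the local dichotomy then forces $\sigma_\upsilon \cong \pi_\upsilon^+$ at every place where $\pi_\upsilon \ncong \pi_\upsilon^c$ and permits exactly $\pi_\upsilon^\pm$ where $\pi_\upsilon \cong \pi_\upsilon^c$, producing a function $\delta$ on the set of places. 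At almost every finite $\upsilon$ the representation $\pi_\upsilon$ is unramified and $\sigma_\upsilon$ must carry a spherical vector for a fixed maximal compact of $H(\mathbb{Q}_\upsilon)$; only $\pi_\upsilon^+$ accommodates such a vector, so $\delta(\upsilon) = +$ at almost every place. Together with the convention $\delta(\upsilon) = +$ forced at the non-self-dual places, this shows the assignment $\sigma \mapsto \delta$ lands in the indexing set of the statement, and it is injective by multiplicity one for $H^\circ(\mathbb{A})$.

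For surjectivity and cuspidality of each constituent I would work inside $\mathcal{A}_0(H)$ directly. Consider the $H(\mathbb{A})$-submodule generated by $V_\pi$; since $t \in H(\mathbb{Q})$ acts on $H^\circ(\mathbb{A})$ by $c$, its translate of $V_\pi$ realizes a copy of $V_{\pi^c}$, and the ambient space is $V_\pi \oplus V_{\pi^c}$ as an $H^\circ(\mathbb{A})$-module. The $H(\mathbb{A})$-structure is then recorded by a global intertwiner $T : V_\pi \to V_{\pi^c}$ satisfying $T^2 = \mathrm{Id}$, which factors as $T = \otimes_\upsilon T_\upsilon$; at each place where $\pi_\upsilon \cong \pi_\upsilon^c$ the local $T_\upsilon$ is unique up to sign. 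Decomposing the restricted tensor product place by place via Clifford theory and gluing back shows that the irreducible $H(\mathbb{A})$-summands of $V_\pi \oplus V_{\pi^c}$ are exactly the $\otimes_\upsilon \pi_\upsilon^{\delta(\upsilon)}$ as $\delta$ ranges over the admissible sign patterns, and each such summand is cuspidal since $V_\pi$ is.

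The hard part will be showing that every admissible $\delta$ is actually realized, i.e.\ that the local sign of $T_\upsilon$ at each self-dual place can be prescribed independently without destroying automorphicity. The key observation is that flipping the sign of $T_\upsilon$ at a single place amounts to twisting the $H(\mathbb{A})$-action on the same underlying automorphic forms by the nontrivial character of $H(\mathbb{Q}_\upsilon)/H^\circ(\mathbb{Q}_\upsilon)$, an operation that preserves both automorphicity and cuspidality. Since only finitely many places can have $\pi_\upsilon \cong \pi_\upsilon^c$ contributing nontrivially, every sign pattern with finitely many minus signs is reached from the canonical all-plus extension by finitely many such twists. This mechanism is the one carried out for $GO(1,3)$ in \cite{12}, and by Remark \ref{r6} the argument transfers to $GO(2,2)$ without change, yielding both the stated decomposition and the cuspidal automorphicity of every summand.
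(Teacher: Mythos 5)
The paper itself offers no proof of this proposition beyond the citation of \cite{12} (pp.\ 382--383) together with Remark \ref{r6}, so your attempt has to be judged on its own merits. The first half of your argument (the local Clifford dichotomy forcing a well-defined sign function $\delta$, the spherical-vector argument for $\delta(\upsilon)=+$ at almost all places, and the identification of the $H(\mathbb{A})$-module generated by $V_{\pi}$ with $V_{\pi}\oplus V_{\pi^{c}}$ via the $t$-translate) is sound and is essentially how the cited argument begins.

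The surjectivity step, however, contains a genuine error. You claim that flipping the sign of $T_{\upsilon}$ at a single self-dual place amounts to twisting by the nontrivial character of $H(\mathbb{Q}_{\upsilon})/H^{\circ}(\mathbb{Q}_{\upsilon})$, ``an operation that preserves both automorphicity and cuspidality.'' It does not preserve automorphicity: a character of $H(\mathbb{A})$ that is nontrivial at exactly one place $\upsilon$ and trivial elsewhere is \emph{not} trivial on $H(\mathbb{Q})$, because the rational element $t$ sits diagonally and has nontrivial image in $H(\mathbb{Q}_{\upsilon})/H^{\circ}(\mathbb{Q}_{\upsilon})$; multiplying automorphic forms by such a character destroys left $H(\mathbb{Q})$-invariance. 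A decisive sanity check: if your twisting mechanism were valid, it would apply verbatim when $\pi\cong\pi^{c}$ and would produce every sign pattern there as well, contradicting the constraint $\prod_{\upsilon}\delta(\upsilon)=+$ in Proposition \ref{p3}. The correct reason all patterns occur here is precisely the hypothesis $\pi\ncong\pi^{c}$: when one extends a function from $H(\mathbb{Q})H^{\circ}(\mathbb{A})$ to $H(\mathbb{A})$ using local intertwiners of prescribed signs, the only global consistency condition imposed by automorphy comes from $t\in H(\mathbb{Q})$ and relates $V_{\pi}$ to $V_{\pi^{c}}$; since these are non-isomorphic subspaces of $\mathcal{A}_{0}(H^{\circ})$, any global sign can be absorbed into the choice of identification of $V_{\pi^{c}}$ with the $t$-translate of $V_{\pi}$, so no relation among the $\delta(\upsilon)$ is forced. (When $\pi\cong\pi^{c}$ the corresponding operator is an involution of $V_{\pi}$ itself, and automorphy pins down its sign, whence the product condition in Proposition \ref{p3}.) Your proof needs this Mackey-theoretic extension argument in place of the local twisting; as written, the key step fails.
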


\item[(ii)] $\pi \cong \pi^{c}$ (See \cite{12}, p. 382-383).\\
In this case we can not stay the same result as Proposition \ref{p2}. Basically, the fibre over a cuspidal automorphic representation $\pi$ of $H^{\circ}(\mathbb{A})$ is not known when $\pi = \pi^{c}$. However, if $\pi$ is generic the we have 
\begin{proposition}
\label{p3}
Assume $\pi$ is a generic  cuspidal automorphic representation of $H(\mathbb{A})$, and $\pi \cong \pi^{c}$. Then
\begin{equation}
\label{14}
\tilde{\pi} = \bigoplus_{\delta} \otimes_{\upsilon} \pi_{\upsilon}^{\delta(\upsilon)},
\end{equation}
where $\delta$ runs over all the maps from the set of all places of $\mathbb{Q}$ to $\lbrace \pm  \rbrace$ with the property that $\delta(\upsilon) = + $ for almost all places of $\mathbb{Q}$, and $\delta(\upsilon) = + $ if $\pi_{\upsilon} \ncong \pi_{\upsilon}^{c}$, and $\prod_{\upsilon} \delta(\upsilon) = + $. Moreover, each $\otimes_{\upsilon} \pi_{\upsilon}^{\delta(\upsilon)}$ is isomorphic to a cuspidal automorphic representation of $H(\mathbb{A})$.   
\end{proposition}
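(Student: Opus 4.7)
The plan is to adapt the argument of \cite{12} (written there for $GO(1,3)$, applicable to $GO(2,2)$ by Remark \ref{r6}) and use genericity to pin down the global sign. I first work at each place. Where $\pi_\upsilon \cong \pi_\upsilon^c$, the structure theory recalled before Remark \ref{r5} gives $\mathrm{Ind}_{H^\circ(\mathbb{Q}_\upsilon)}^{H(\mathbb{Q}_\upsilon)} \pi_\upsilon = \pi_\upsilon^+ \oplus \pi_\upsilon^-$, distinguished by the $\pm 1$ eigenvalue of the intertwiner $\theta^\pm$ that extends the action of $t$; where $\pi_\upsilon \ncong \pi_\upsilon^c$ (only finitely many places), $\mathrm{Ind}\, \pi_\upsilon$ is irreducible and only $\pi_\upsilon^+$ occurs. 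By Lemma \ref{le2} any irreducible cuspidal automorphic $\sigma$ of $H(\mathbb{A})$ with $\sigma^\circ \supset \pi$ restricts locally to $\pi_\upsilon$, so it must take the form $\otimes_\upsilon \pi_\upsilon^{\delta(\upsilon)}$ for some sign function $\delta$ as in the statement, with $\delta(\upsilon) = +$ forced at the places where $\pi_\upsilon \ncong \pi_\upsilon^c$. This delivers the upper bound for $\tilde{\pi}$.

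The crucial parity constraint $\prod_\upsilon \delta(\upsilon) = +$ is where genericity is used. By Remark \ref{r5}, $\pi = \pi(\tau_1, \tau_2)$ with $\omega_{\tau_1}\omega_{\tau_2} = 1$, and $\pi \cong \pi^c$ translates to $\tau_2 \cong \tau_1^\vee$. Since $\pi$ is globally generic, the Whittaker functional $W_\pi$ relative to a fixed additive character is unique up to scalar and factorizes as $W_\pi = \prod_\upsilon W_{\pi_\upsilon}$ with each $W_{\pi_\upsilon}$ also unique up to scalar (uniqueness of Whittaker models for $GL(2) \times GL(2)$). I then fix a global intertwiner $\iota: \pi \to \pi^c$ normalized so that it acts on $W_\pi$ by $+1$; the factorization $\iota = \prod_\upsilon \iota_\upsilon$ makes each $\iota_\upsilon$ act on $W_{\pi_\upsilon}$ by $\delta(\upsilon) \in \{\pm 1\}$, according as $\sigma_\upsilon$ is isomorphic to $\pi_\upsilon^+$ or $\pi_\upsilon^-$. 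Comparing the global and factorized local signs yields $\prod_\upsilon \delta(\upsilon) = +$.

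Conversely, for each admissible $\delta$ with $\prod_\upsilon \delta(\upsilon) = +$, I realize $\otimes_\upsilon \pi_\upsilon^{\delta(\upsilon)}$ in $\mathcal{A}_0(H)$ by extending cusp forms $f \in V_\pi$ to $H(\mathbb{A})$ via the intertwiner $(\prod_\upsilon \delta(\upsilon))\iota = \iota$, which produces a genuine cusp form on $H(\mathbb{A})$ whose generated representation is isomorphic to $\otimes_\upsilon \pi_\upsilon^{\delta(\upsilon)}$; uniqueness of the Whittaker functional forces multiplicity one, so the sum exhausts $\tilde{\pi}$ and gives (\ref{14}). The main obstacle is the global-to-local sign calculation in the middle step; this reduces, via the parametrization $\pi = \pi(\tau_1, \tau_2)$ and the explicit pullback of the $t$-action to $(g_1, g_2) \mapsto (g_2^\vee, g_1^\vee)$ on $\tilde{H} = GL(2) \times GL(2)$, to the standard Whittaker-model analysis for generic cuspidal automorphic representations of $GL(2, \mathbb{A}) \times GL(2, \mathbb{A})$, which is available in the literature and explicit enough to carry through.
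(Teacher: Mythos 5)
Your argument is correct and is essentially the proof the paper itself outsources: the paper gives no argument for Proposition \ref{p3} beyond citing \cite{12} (pp.~382--383), and your sketch — local $\pm$ extensions from $\mathrm{Ind}_{H^\circ}^H\pi_\upsilon$, the global intertwiner induced by the rational element $t$ normalized against the unique Whittaker functional to extract the parity condition $\prod_\upsilon\delta(\upsilon)=+$, and the converse realization of each admissible $\otimes_\upsilon\pi_\upsilon^{\delta(\upsilon)}$ by extending cusp forms — is exactly the argument of that reference, with genericity entering where it should. The only detail left implicit (the compatibility of the generic character with the swap $c$ on $GL(2)\times GL(2)$, needed to make sense of ``acts by $+1$ on $W_\pi$'') is standard bookkeeping and is no less complete than the paper's own treatment.
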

 
\end{itemize}

\begin{remark}
\label{r7}
 The exact sequence (\ref{7}) over $\mathbb{A}$ is 
\begin{equation*}
1 \to \mathbb{A}^{\times} \to GL(2, \mathbb{A}) \times GL(2, \mathbb{A}) \stackrel{\rho}{\to} GSO(2, 2)(\mathbb{A}) \to 1.
\end{equation*}
Therefore, it is clear that an irreducible cuspidal automorphic representation $\pi$ of $H^{\circ}(\mathbb{A})$, via pullback to $GL(2, \mathbb{A}) \times GL(2, \mathbb{A})$, can be considered as $\tau_{1} \otimes \tau_{2}$, where $\tau_{i}$ is an irreducible cuspidal automorphic representation of $GL(2, \mathbb{A})$ and $\omega_{\tau_{1}}\omega_{\tau_{2}} = 1$. Again $\omega_{\tau_{i}}$ denotes the central character of $\tau_{i}$. Since the above argument we denote a cuspidal automorphic  representation $\pi$ of $H^{\circ}(\mathbb{A})$ by $\pi = \pi(\tau_{1}, \tau_{2})$, where $\omega_{\tau_{1}}\omega_{\tau_{2}} = 1$. Moreover, if $\pi = \pi(\tau_{1}, \tau_{2})$, then $\pi^{c} = \pi(\tau_{2}^{\vee}, \tau_{1}^{\vee})$. Here $\tau^{\vee}$ denotes the cuspidal automorphic representation of $GL(2, \mathbb{A})$ such that $V_{\tau^{\vee}} = V_{\tau}$ and $\tau^{\vee}(g)v = \tau((g^{-1})^{\vee})v$ for all $g \in GL(2, \mathbb{A})$ and $v \in V_{\tau}$. An irreducible cuspidal automorphic representation $\sigma$ of $H(\mathbb{A})$ is denoted by $(\pi, \delta)$, where $\pi = \pi(\tau_{1}, \tau_{2})$ is a cuspidal automorphic representation of $H^{\circ}(\mathbb{A})$ and $\delta$ is an acceptable choice as in the Propositions \ref{p2} or \ref{p3}. $\sigma = (\pi, \delta)$ is called an extension of $\pi$ to $H(\mathbb{A})$.
\end{remark}
\begin{remark}
\label{r8}
We should mention that if $\pi = \pi(\tau_{1}, \tau_{2})$ is a cuspidal automorphic representation of $H^{\circ}(\mathbb{A})$, then $\pi$ is generic if and only if both of $\tau_{i}$'s are infinite dimensional. This is because of the classical result that infinite dimensional cuspidal automorphic representations of $GL(2, \mathbb{A})$ are the only ones that be generic. We always assume that $\pi = \pi(\tau_{1}, \tau_{2})$ is generic. 
\end{remark}

Now we are at the position to state the non-vanishing criteria of the global theta lifting for our desired dual reductive pair $(H, G)$, where $H = GO(2, 2)$ and $G = GSp(4)$. The following result, proved by S. Takeda in \cite{10}, shows that the global non-vanishing for dual reductive pair $(H, G)$ can be detected by non-vanishing of local theta lifting at all local places (archimedean and non-archimedean).

\begin{theorem}[S. Takeda]
\label{th2}
Let $\sigma = (\pi, \delta)$ be an unitary cuspidal automorphic representation of $H(\mathbb{A})$ such that $\pi = \pi(\tau_{1}, \tau_{2})$ and $\tau_{i}$'s are infinite dimensional. Then the global theta lifting of $\sigma$ (i.e., $\Theta(\sigma)$) to $G(\mathbb{A})$ is non-zero if and only if each local constituent $\sigma_{\upsilon}$ has a non-zero lift to $G(\mathbb{Q}_{\upsilon})$.
\end{theorem}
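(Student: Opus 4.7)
The plan is to prove the two implications of the biconditional separately. The necessity direction (non-vanishing of $\Theta(\sigma)$ implies non-vanishing of every local theta lift $\theta(\sigma_\upsilon)$) is essentially formal: if $\Theta(\sigma)\neq 0$, one can produce a non-zero factorizable vector in its irreducible constituent $\Pi_\sigma = \otimes_\upsilon \Pi_\upsilon$, and then each tensor factor $\Pi_\upsilon$ realizes a non-zero local theta lift. Conversely, if even one local theta lift vanishes, the local-global factorization forces $\Theta(\sigma) = 0$. So the substantive content is the sufficiency direction.

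For sufficiency, the natural strategy is to use a Rallis inner product (Siegel--Weil type) formula for the similitude dual pair $(GO(2,2),GSp(4))$. For factorizable data $f = \otimes f_\upsilon \in V_\sigma$ and $\varphi = \otimes \varphi_\upsilon \in \mathcal{S}(V(\mathbb{A})^2)$, the plan is to express the Petersson norm $\langle \theta(f;\varphi),\theta(f;\varphi)\rangle$ as the product of a special value of an automorphic L-function attached to $\sigma^\circ = \pi(\tau_1,\tau_2)$ and a product of local zeta integrals indexed by the places of $\mathbb{Q}$. Concretely, the doubling method applied to $GO(2,2)$, in conjunction with the see-saw coming from the exact sequence (\ref{7}), identifies the global factor with the Rankin--Selberg L-value $L(1,\tau_1\times\tau_2^\vee)$.

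Granted such a factorization, one reduces the claim to two separable non-vanishing assertions. First, the global L-factor is non-zero: since $\tau_1,\tau_2$ are infinite-dimensional cuspidal representations of $GL(2,\mathbb{A})$, the Rankin--Selberg L-function $L(s,\tau_1\times\tau_2^\vee)$ is entire (or has at worst a simple pole handled by twisting and by the extension parameter $\delta$), and Jacquet--Shalika's classical non-vanishing at the edge $s=1$ of the critical strip applies. Second, each local zeta integral should be non-zero precisely when the local theta lift $\theta(\sigma_\upsilon)$ is non-zero; at non-archimedean places this is a purely local assertion about the Weil representation that can be verified by explicit unfolding of the zeta integral, and at the archimedean place it follows from Theorem \ref{th1} together with an explicit computation of matrix coefficients for the relevant discrete series.

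The main obstacle, which is precisely what Takeda overcomes in \cite{10}, lies at the dyadic places. For a general similitude dual pair, such a local-global non-vanishing criterion requires a temperedness hypothesis at $\upsilon\mid 2$ because Howe duality is not known there, and without it the local theta correspondence need not be a well-defined function on isomorphism classes (see Remark \ref{r4}). For $(GO(2,2),GSp(4))$ specifically, one can remove this hypothesis by exploiting the splitting $GO(2,2)=GSO(2,2)\rtimes\langle t\rangle$ and the description $\pi^\circ=\pi(\tau_1,\tau_2)$ to reduce every local question at $\upsilon\mid 2$ to a corresponding statement for the pair $(GL(2),GL(2))$, where Howe duality is known and the representation theory of $GL(2,\mathbb{Q}_\upsilon)$ is fully understood. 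The delicate bookkeeping step is tracking the extension datum $\delta$ consistently through the see-saw and verifying compatibility with the fibered description of Propositions \ref{p2} and \ref{p3}; that is where I would expect the hardest work to lie.
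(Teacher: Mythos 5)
The paper does not actually prove Theorem \ref{th2}: it is quoted from Takeda's work \cite{10} as an external input, so there is no internal argument to compare yours against. That said, your outline does track the strategy of the cited proof. The sufficiency direction is indeed established via a Rallis inner product (doubling/Siegel--Weil) formula for this dual pair, with the global input being the non-vanishing of the Rankin--Selberg value $L(1,\tau_{1}\times\tau_{2})$ (Jacquet--Shalika/Shahidi) and the local input being a non-vanishing criterion for the local zeta integrals; and the genuinely new content of \cite{10} is, as you say, the removal of the temperedness hypothesis at $\upsilon\mid 2$ by exploiting the identification of $GSO(2,2)$ with $(GL(2)\times GL(2))/\mathbb{G}_{m}$. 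The necessity direction is formal, as you claim.

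As a proof, however, your text has substantive gaps rather than omitted routine details. First, the Rallis inner product formula computes the Petersson pairing of $\theta(f;\varphi)$ with itself only when the lift is square-integrable. By the tower property one must first split into cases according to whether $\sigma$ already lifts non-trivially to the smaller member $GSp(2)$ of the Witt tower: if it does, $\Theta(\sigma)$ on $GSp(4)$ is non-zero by persistence but non-cuspidal, and the inner product argument does not apply as stated; if it does not, one must justify that $\Theta(\sigma)$ is cuspidal before unfolding. Your sketch never addresses cuspidality. Second, the claim that each local zeta integral is non-zero exactly when the local theta lift is non-zero is itself a serious local theorem in the style of Rallis's work on the inner product formula; it does not drop out of ``explicit unfolding,'' and at the archimedean place it does not follow from Theorem \ref{th1}, which only identifies the lift under the hypothesis that it is non-zero. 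Until those two points are supplied, what you have is a correct road map to Takeda's theorem, not a proof of it.
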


\noindent Theorem \ref{th2} means that we need only to have a local criteria for non-vanishing. This is also done by S. Takeda in our desired case $(H, G)$ (see \cite{10}, p. 19).
\begin{theorem}
\label{th3}
Let $\mathbb{Q}_{\upsilon}$ be a local field ($\upsilon$ is not necessarily non-archimedean). Let $\sigma_{\upsilon}$ be an irreducible admissible representation of $H(\mathbb{Q}_{\upsilon})$ such that $\sigma_{\upsilon}^{c} = \sigma_{\upsilon}$. Then only $\sigma_{\upsilon}^{+}$ has non-zero theta lift to $G(\mathbb{Q}_{\upsilon})$.    
\end{theorem}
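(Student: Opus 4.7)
The plan is to exploit the decomposition of the Weil representation as an $H^{\circ}(\mathbb{Q}_{\upsilon}) \times G(\mathbb{Q}_{\upsilon})$-module and track the action of the outer involution $t$ explicitly, using the Gan--Takeda form of Howe duality for the similitude pair $(GSO(2,2), GSp(4))$.

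First, I would unwind what $\Theta(\sigma_{\upsilon}^{\pm}) \neq 0$ actually means: it is the existence of a nonzero $H(\mathbb{Q}_{\upsilon})$-equivariant quotient of the Weil representation $\omega$ of the form $V_{\sigma_{\upsilon}^{\pm}} \otimes \Pi$ for some irreducible admissible representation $\Pi$ of $G(\mathbb{Q}_{\upsilon})$. Since $\sigma_{\upsilon}^{\pm}$ restrict to the same representation of $H^{\circ}(\mathbb{Q}_{\upsilon})$ (call it $\pi_{\upsilon}$), both theta lifts feed off the same $H^{\circ}$-isotypic piece $\omega[\pi_{\upsilon}] \cong V_{\pi_{\upsilon}} \otimes \Theta(\pi_{\upsilon})$ of the Weil representation, where $\Theta(\pi_{\upsilon})$ is the $G$-theta lift of $\pi_{\upsilon}$ from $H^{\circ}$. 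Because $\pi_{\upsilon}^{c} \cong \pi_{\upsilon}$, the action of $\omega(t)$ preserves $\omega[\pi_{\upsilon}]$ (it a priori sends $\pi_{\upsilon}$-isotypic to $\pi_{\upsilon}^{c}$-isotypic), and under the factorization it decomposes as $\theta_{\pi_{\upsilon}} \otimes T$, where $\theta_{\pi_{\upsilon}} \in \mathrm{End}(V_{\pi_{\upsilon}})$ intertwines $\pi_{\upsilon}$ with $\pi_{\upsilon}^{c}$ (and is fixed up to sign, with $\theta_{\pi_{\upsilon}}^{2}=\mathrm{Id}$) and $T \in \mathrm{End}(\Theta(\pi_{\upsilon}))$ is a $G(\mathbb{Q}_{\upsilon})$-equivariant involution. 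The $\pm 1$-eigenspaces of $T$ are exactly $\Theta(\sigma_{\upsilon}^{\pm})$ as $G(\mathbb{Q}_{\upsilon})$-modules, so the theorem reduces to the assertion that $T$ acts as the identity.

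To establish $T = \mathrm{Id}$, I would first invoke Howe duality for the similitude pair $(GSO(2,2), GSp(4))$, which guarantees that $\Theta(\pi_{\upsilon})$ is an irreducible $G(\mathbb{Q}_{\upsilon})$-module (this is the content of Takeda's extension of Howe's conservation relations to the similitude setting for this pair, valid at all places, including $\upsilon \mid 2$). By Schur's lemma, $T$ must then act as a scalar, necessarily $\pm 1$ since $T^{2} = \omega(t^{2}) = \mathrm{Id}$. It remains to exhibit a nonzero element in the $+1$ eigenspace. Exploiting the Schr\"odinger model $\mathcal{S}(V(\mathbb{Q}_{\upsilon})^{2})$, where $V = M_{2\times 2}$ and $t$ acts as transpose $(\omega(t)\varphi)(x_{1},x_{2})= \varphi(x_{1}^{t},x_{2}^{t})$, I would choose a distinguished $\omega(t)$-fixed Schwartz function $\varphi_{0}$ (a Gaussian at the archimedean place, the characteristic function of a self-transpose-stable lattice at non-archimedean places) whose projection to the $\pi_{\upsilon}$-isotypic component is nonzero; this projection lives in the $+1$-eigenspace of $T$, forcing $T = +\mathrm{Id}$.

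The main obstacle is the irreducibility of $\Theta(\pi_{\upsilon})$, which underlies the reduction to a scalar: outside of tempered cases and away from residue characteristic $2$ this is Howe duality, but for the full statement (covering all $\sigma_{\upsilon}$ including at $\upsilon \mid 2$) one needs the explicit analysis of the theta correspondence for $(GSO(2,2), GSp(4))$ as in Takeda's work, which uses seesaw arguments with $(GSp(2) \times GSp(2), GO(2,2))$ and the structure of the MVW involution. A secondary but essential point is verifying that the chosen $\omega(t)$-fixed $\varphi_{0}$ does not annihilate the $\pi_{\upsilon}$-projection; this is where the similitude character matching and the explicit formulas for the action of $R(\mathbb{Q}_{\upsilon})$ on $\mathcal{S}(V(\mathbb{Q}_{\upsilon})^{2})$ enter, and where the archimedean and non-archimedean cases require slightly different local arguments.
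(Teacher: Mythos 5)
The paper itself contains no proof of this theorem: it is quoted directly from S.~Takeda (\cite{10}, p.~19), so there is no internal argument to measure yours against, and what you have written is in effect a reconstruction of Takeda's argument. Its overall architecture (pass to the maximal $\pi_{\upsilon}$-isotypic quotient $V_{\pi_{\upsilon}}\otimes\Theta(\pi_{\upsilon})$ of $\omega$, factor $\omega(t)$ as $\theta_{\pi_{\upsilon}}\otimes T$, identify $\Theta(\sigma_{\upsilon}^{\pm})$ with the eigenspaces of $T$) is the standard and correct one, but two steps do not go through as written. First, the reduction of $T$ to a scalar rests on the irreducibility of the big theta lift $\Theta(\pi_{\upsilon})$, which you attribute to Howe duality. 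Howe duality only yields a \emph{unique irreducible quotient} of $\Theta(\pi_{\upsilon})$, not its irreducibility; for the equal-size pair $(GSO(2,2),GSp(4))$ irreducibility of the full lift is precisely the kind of case-by-case statement you cannot assume for free. The usual repair is to apply the involution to $\mathrm{Hom}_{H^{\circ}(\mathbb{Q}_{\upsilon})\times G(\mathbb{Q}_{\upsilon})}(\omega,\pi_{\upsilon}\otimes\Pi)$ for each fixed irreducible $\Pi$, a space of dimension at most one, so that $t$ acts there by a sign; you mention this obstacle but do not actually route the proof around it.

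Second, and more seriously, the sign determination fails as a matter of logic. If $T=\varepsilon\,\mathrm{Id}$ with $\varepsilon=\pm1$, the $+1$-eigenspace of $\omega(t)=\theta_{\pi_{\upsilon}}\otimes T$ on $V_{\pi_{\upsilon}}\otimes\Theta(\pi_{\upsilon})$ is $V^{\varepsilon}\otimes\Theta(\pi_{\upsilon})$, where $V^{+}$ and $V^{-}$ are the eigenspaces of the linear involution $\theta_{\pi_{\upsilon}}$ on $V_{\pi_{\upsilon}}$; these are \emph{not} $H^{\circ}$-submodules and are both nonzero in general (e.g.\ symmetric and antisymmetric tensors when $c$ swaps the two $GL(2)$-factors). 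Hence exhibiting an $\omega(t)$-fixed Schwartz function with nonzero projection to the $\pi_{\upsilon}$-isotypic quotient is consistent with either value of $\varepsilon$ and cannot force $T=+\mathrm{Id}$. To pin down the sign one must compare $\omega(t)$ on a specific test vector with the \emph{normalized} operator $\theta^{+}$ that defines $\sigma_{\upsilon}^{+}$ acting on the corresponding vector of $V_{\pi_{\upsilon}}$ --- and neither your proof nor the paper ever fixes which of the two extensions is labelled $\sigma_{\upsilon}^{+}$. Without that normalization the most your argument can deliver is that exactly one of the two extensions has nonzero theta lift; identifying it as ``the $+$ one'' requires the explicit test-vector computation (Harris--Kudla at the archimedean place, Takeda at the finite places) that you defer.
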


Finally we have the following theorem stated in \cite{13}.
\begin{theorem}
\label{th4}
Let $\pi = \pi(\tau_{1}, \tau_{2})$ denote a generic cuspidal automorphic representation of $H^{\circ}(\mathbb{A})$. Then there exists a cuspidal automorphic representation $\sigma = (\sigma, \delta)$ of $H(\mathbb{A})$ such that the global theta lift to $G(\mathbb{A})$ is non-zero (i.e., $\Theta(\sigma) \neq 0$) and $\Theta(\sigma)$ is generic.    
\end{theorem}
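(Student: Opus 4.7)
The plan is to use Theorem \ref{th2} to reduce the global non-vanishing to purely local statements, and then to exploit Theorem \ref{th3} together with the classification of extensions in Propositions \ref{p2} and \ref{p3} to single out an extension $\sigma$ of $\pi$ all of whose local components have non-zero theta lift. Concretely, I would set $\delta(\upsilon)=+$ at every place $\upsilon$. This choice is admissible: in the case $\pi\not\cong\pi^{c}$, Proposition \ref{p2} only requires $\delta(\upsilon)=+$ whenever $\pi_{\upsilon}\not\cong\pi_{\upsilon}^{c}$, which is automatic; in the case $\pi\cong\pi^{c}$, Proposition \ref{p3} adds the constraint $\prod_{\upsilon}\delta(\upsilon)=+$, which is trivially satisfied. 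Thus $\sigma=(\pi,\delta)$ is an irreducible cuspidal automorphic representation of $H(\mathbb{A})$ whose local components are precisely $\sigma_{\upsilon}=\pi_{\upsilon}^{+}$.

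Next I would verify that the local theta lift of $\sigma_{\upsilon}$ is non-zero at every place. At places where $\pi_{\upsilon}\cong\pi_{\upsilon}^{c}$, this is exactly the content of Theorem \ref{th3}. At the remaining places, the induced representation $\mathrm{Ind}_{H^{\circ}}^{H}\pi_{\upsilon}$ is already irreducible and equals $\pi_{\upsilon}^{+}$, so non-vanishing of its theta lift reduces to a statement about the local theta correspondence for $(GSO(2,2),GSp(4))$ applied to $\pi(\tau_{1,\upsilon},\tau_{2,\upsilon})$ with both $\tau_{i,\upsilon}$ infinite-dimensional; this is a standard computation using the Schr\"odinger model of the Weil representation (see \cite{17}). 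Combining these local non-vanishings with Theorem \ref{th2} yields $\Theta(\sigma)\neq 0$, and inspection of the defining integral \eqref{9} shows that the resulting automorphic forms are cuspidal.

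It remains to establish genericity of $\Theta(\sigma)$. Since $\pi=\pi(\tau_{1},\tau_{2})$ is generic by assumption (both $\tau_{i}$ are infinite dimensional, Remark \ref{r8}), one can choose $f\in V_{\sigma}$ with non-zero global Whittaker functional, and the key step is to show that integrating $\theta(f;\varphi)$ against a non-degenerate character of the unipotent radical of the Siegel parabolic produces a non-zero distribution for a suitable $\varphi\in\mathcal{S}(V(\mathbb{A})^{2})$. The standard unfolding trick reduces this to a product of local Whittaker-type integrals against matrix coefficients of the Weil representation; the non-vanishing of each local factor is known for the similitude pair $(GSO(2,2),GSp(4))$ and is the central input of Gan--Takeda's analysis in \cite{17}. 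The main technical obstacle is the archimedean place, where one must match the discrete-series Whittaker model of $\pi_{\infty}$ with the generic discrete-series Whittaker model on the $GSp(4,\mathbb{R})$ side; once this matching is in place the global genericity of $\Theta(\sigma)$ follows by collecting the local statements.
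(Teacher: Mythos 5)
The paper does not actually prove this statement: Theorem \ref{th4} is imported verbatim from \cite{13}, and Remark \ref{r9} attributes its content to Howe--Piatetski-Shapiro \cite{11} for the isometry pair $(O(2,2),Sp(4))$ and to Takeda for the similitude case. So the real question is whether your sketch would stand on its own as a proof, and as written it does not. The non-vanishing half is plausible: choosing $\delta(\upsilon)=+$ everywhere and invoking Theorem \ref{th2} is the right reduction, and at the places where $\pi_{\upsilon}\cong\pi_{\upsilon}^{c}$ Theorem \ref{th3} (read as asserting existence of a non-zero lift of $\sigma_{\upsilon}^{+}$, not merely vanishing for $\sigma_{\upsilon}^{-}$) covers you; at the remaining non-archimedean places Theorem \ref{th9} in fact lists the lift explicitly and it is non-zero in every case, which is a cleaner citation than your appeal to ``a standard computation in the Schr\"odinger model.'' But you never address local non-vanishing at the archimedean place, where $\pi_{\infty}$ is an arbitrary generic representation (the theorem makes no discrete-series hypothesis), and Theorem \ref{th1} only treats particular discrete series.

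The more serious gap is the genericity half, which is the actual content of the cited theorem. Your argument is: unfold the Whittaker coefficient of $\theta(f;\varphi)$, reduce to local integrals, and assert that ``the non-vanishing of each local factor is known.'' That known non-vanishing \emph{is} the theorem of Howe--Piatetski-Shapiro/Takeda being quoted; you have restated the statement rather than proved it, and you explicitly leave the archimedean matching --- which you yourself call the main technical obstacle --- unresolved. Finally, the claim that ``inspection of the defining integral (\ref{9}) shows that the resulting automorphic forms are cuspidal'' is wrong as a justification: cuspidality of a theta lift is not visible from the integral but is governed by the Rallis tower property (vanishing of the lift to the lower member $GSp(2)$ of the Witt tower), and indeed for $\tau_{1}\cong\tau_{2}$ the lift need not be cuspidal. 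Since the theorem as stated does not assert cuspidality this does not sink the argument, but the step should be deleted or replaced by the tower argument.
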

\begin{remark}
\label{r9}
The existence of a non-vanishing global theta correspondence for the orthogonal dual reductive pair $(O(2, 2), Sp(4))$ is a classical result by R. Howe and I. I. Piatetski-Shapiro in \cite{11}. Moreover, they prove that the $\Theta(\sigma)$ admits Whittaker model if $\sigma$ is generic. S. Takeda shows the same result for the similitude case. 
\end{remark}

Now we can summarize the above results in our desired way. 

\begin{corollary}
\label{c2}
Let $\pi = \pi(\tau_{1}, \tau_{2})$ be a generic cuspidal automorphic representation of $H^{\circ}(\mathbb{A})$. Let $\sigma^{+}$ denote the extension of $\pi$ to $H(\mathbb{A})$ such that $\sigma^{+} = (\pi, \delta)$ and $\delta = +$ for all local places (archimedean and non-archimedean). Then $\Theta(\sigma^{+}) \neq 0$ and $\Theta(\sigma^{+})$ is generic. Moreover, $\sigma^{+}$ is the only extension of $\pi$ to $H(\mathbb{A})$ which has non-zero global theta lift to $G(\mathbb{A})$.

\end{corollary}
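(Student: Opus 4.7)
The plan is to assemble Theorems \ref{th2}, \ref{th3}, and \ref{th4} with the parametrization of extensions from Propositions \ref{p2} and \ref{p3}. The argument proves uniqueness first and then extracts existence as a consequence.

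First I would check that $\sigma^{+} = (\pi, \delta)$ with $\delta(v) = +$ at every place is a bona-fide extension of $\pi$ to $H(\mathbb{A})$. In the case $\pi \ncong \pi^{c}$, Proposition \ref{p2} only requires $\delta(v) = +$ whenever $\pi_{v} \ncong \pi_{v}^{c}$, which the all-plus $\delta$ trivially satisfies. In the case $\pi \cong \pi^{c}$, Proposition \ref{p3} additionally imposes $\prod_{v} \delta(v) = +$, which is automatic; note that the hypothesis of Proposition \ref{p3} is available precisely because $\pi$ was assumed generic. So $\sigma^{+}$ exists as an irreducible cuspidal automorphic representation of $H(\mathbb{A})$.

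Next I would prove uniqueness. Suppose $\sigma_{0} = (\pi, \delta_{0})$ is any extension of $\pi$ with $\Theta(\sigma_{0}) \neq 0$. Since $\pi$ is generic, both $\tau_{i}$ are infinite dimensional by Remark \ref{r8}, so the hypotheses of Theorem \ref{th2} are met and we may conclude that $\theta(\sigma_{0,v}) \neq 0$ at every place $v$. At any place where $\pi_{v} \cong \pi_{v}^{c}$ (equivalently $\sigma_{0,v}^{c} \cong \sigma_{0,v}$), Theorem \ref{th3} asserts that only the $+$-extension $\pi_{v}^{+}$ admits a non-zero local theta lift, forcing $\delta_{0}(v) = +$. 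At the remaining places $\pi_{v}^{+}$ is the only available extension, so $\delta_{0}(v) = +$ by convention. Therefore $\sigma_{0} = \sigma^{+}$, establishing that $\sigma^{+}$ is the unique extension of $\pi$ whose global theta lift can be non-zero.

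For existence, Theorem \ref{th4} provides at least one extension $\sigma$ of $\pi$ with $\Theta(\sigma) \neq 0$ and $\Theta(\sigma)$ generic. By the uniqueness just proved this extension must be $\sigma^{+}$, whence $\Theta(\sigma^{+}) \neq 0$ and $\Theta(\sigma^{+})$ is generic, completing the argument. The main obstacle here is conceptual rather than computational: all of the technical weight has been absorbed into Takeda's results. The single delicate point is that Theorem \ref{th3} is applied uniformly at every place, including archimedean and dyadic ones, which is legitimate precisely because for the pair $(GO(2,2), GSp(4))$ Takeda removes the temperedness hypothesis that would otherwise be required at $v \mid 2$, as noted in Remark \ref{r4}.
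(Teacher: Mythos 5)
Your argument is correct and is essentially the same as the paper's: both rely on Theorem \ref{th4} for the existence of some extension with non-vanishing lift, Theorem \ref{th2} to reduce to local non-vanishing, and Theorem \ref{th3} to force $\delta(\upsilon)=+$ at every place, with uniqueness following because a single $-$ kills the local (hence global) lift. The only difference is organizational — you prove uniqueness first and deduce existence, and you explicitly verify the parity condition of Proposition \ref{p3} — but the logical content is identical.
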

\begin{proof}
Fix a generic cuspidal automorphic representation $\pi = \pi(\tau_{1}, \tau_{2})$ of $H^{\circ}(\mathbb{A})$. According to Theorem \ref{th4} we have an extension $\sigma = (\pi, \delta)$ such that $\sigma$ is a cuspidal automorphic representation of $H(\mathbb{A})$ and $\Theta(\sigma) \neq 0$. According to the Theorem \ref{th2} this implies that all local theta lifts to $G(\mathbb{Q}_{\upsilon})$ are non-zero for any local place $\upsilon$. Therefore, Theorem \ref{th3} implies  that $\sigma_{\upsilon} = \pi_{\upsilon}^{+}$ because otherwise it is local theta lift should be zero. This means that $\delta = +$ for all places. Moreover, it clear that if $\sigma^{'} = (\pi, \delta^{'})$ such that $\delta(\upsilon) = -$ for at least one place $\upsilon$, then the global theta lift to $G(\mathbb{A})$ is zero because the local theta lift at place $\upsilon$ is zero.
\renewcommand{\qedsymbol}{}
\end{proof}

\section{The construction of strict endoscopic cohomological classes}
\label{sec6}

In this section we are going to use Corollaries \ref{c1} and \ref{c2} to construct strict endoscopic classes. Let $G = GSp(4)$ and $H = GO(2, 2)$ be algebraic groups over $\mathbb{Q}$. Moreover $\eta$ and $\nu$ denote the similitude characters of $G$ and $H$ receptively. Also, $H^{\circ} = GSO(2, 2)$ be the subgroup of $G$.\\

Fix $\lambda = (l, m)$ (Note that we consider the convention $c = -l-m$.) Assume that $l - m \geq 2$ and $m \geq 2$. Fix the local system $\mathbb{V}(\lambda)$ on $M_{G}(\mathbb{C})$. Our goal is constructing strict endoscopic cohomology classes of $H_{!}^{3}(M_{G}, \mathbb{V}(\lambda))$. According to the isomorphism in (\ref{4}), first we need to construct irreducible cuspidal automorphic representation $\pi = \pi_{\infty} \otimes \pi_{fin}$ of $GSp(4, \mathbb{A})$ such that $\pi_{\infty}$ be isomorphic with $\pi_{\Lambda + \rho}^{(1, 2 )}$ or $\pi_{\Lambda + \rho}^{(2, 1)}$ for a $\Lambda$ of Type II or III in Table \ref{table2}. Secondly, we need to show that the finite part of $\pi$, $\pi_{fin}$, does not appear as a finite part of any irreducible cuspidal automorphic representation of $GSp(4, \mathbb{A})$ such that its archimedean component be isomorphic with  $\pi_{\Lambda + \rho}^{(3, 0)}$ or$\pi_{\Lambda + \rho}^{(0, 3)}$ for a $\Lambda$ of Type I or VI in Table \ref{table2}. Therefore under the isomorphism (\ref{4}), the $\pi_{fin}$ is a strict endoscopic cohomolgy class because it contributors in $H_{!}^{1, 2}(M_{G}, \mathbb{V}(\lambda)) \oplus H_{!}^{2, 1}(M_{G}, \mathbb{V}(\lambda))$ (i.e, non-holomorphic terms) but does not contributes in $H_{!}^{3, 0}(M_{G}, \mathbb{V}(\lambda)) \oplus H_{!}^{0, 3}(M_{G}, \mathbb{V}(\lambda))$ (i.e, holomorphic terms). \\

We start with a fix irreducible cuspidal automorphic representation $\pi(\tau_{1}, \tau_{2})$ of $H^{\circ}(\mathbb{A})$. We assume that $\tau_{i}$'s are infinite dimensional, i.e., $\pi(\tau_{1}, \tau_{2})$ is generic. We assume $\pi(\tau_{1}, \tau_{2}) \cong \pi_{\infty} \otimes \pi_{fin}$ such that $\pi_{\infty}^{*}$ (as a $(\mathfrak{h}, K_{H})$-module) be isomorphic with either $\pi_{l + m + 4, l - m + 2}(c)$ or $\pi_{l + m + 4, m - l -2}(c)$ where $c = -l - m$. Now $\sigma^{+}$ is the extension of $\pi(\tau_{1}, \tau_{2})$ on $H(\mathbb{A})$. This means that $\sigma^{+} = (\pi(\tau_{1}, \tau_{2}), \delta)$ such that $\delta = +$ for all local places. If we consider $\sigma^{+} \cong \sigma^{+}_{\infty} \otimes \sigma^{+}_{fin}$, then according to Remark \ref{r3}, $\sigma^{+}_{\infty}$ (as a $(\mathfrak{h}, \mathbb{K}_{H})$-module) is isomorphic with either $\pi(l + m + 4, l - m + 2; c)$ or $\pi(l + m + 4, m - l -2; c)$.\\

According to Corollary \ref{c2} the global theta lift of $\sigma^{+}$(i.e., $\Theta(\sigma^{+})$) is non-zero and $\Theta(\sigma^{+})$ is a generic cuspidal automorphic representation of $G(\mathbb{A})$. Now according to Corollary \ref{c1}, $\Pi_{\sigma^{+}}$ (i.e, the irreducible constituent of $\Theta(\sigma^{+})$) is an irreducible cuspidal automorphic representation of $G(\mathbb{A})$ such that its archimedean constituent $\Pi_{\sigma^{+}, \infty}$ is isomorphic either to the discrete series representation $\pi_{\Lambda + \rho}^{(1, 2 )}$ or $\pi_{\Lambda + \rho}^{(2, 1)}$, where we assume $\Pi_{\sigma^{+}} \cong \Pi_{\sigma^{+}, \infty} \otimes \Pi_{\sigma^{+}, fin}$ such that $\Lambda = (m + 2, -l, c)$ for the case $\pi_{\Lambda + \rho}^{(1, 2 )}$ and $\Lambda = (l + 3, -m, -c)$ for the case $\pi_{\Lambda + \rho}^{(2, 1)}$.\\

\noindent Now $\Pi_{\sigma^{+}, fin}$ is a non-holomorphic cohomology class of $H_{!}^{3}(M_{G}, \mathbb{V}(\lambda))$ under the isomorphism described in (\ref{4}). Explicitly, $\Pi_{\sigma^{+}, fin}$ contributes in $H_{!}^{1, 2}(M_{G}, \mathbb{C})$ or $H_{!}^{2, 1}(M_{G}, \mathbb{C})$ under the isomorphism in (\ref{4}). Our next goal is to show that $\Pi_{\sigma^{+}, fin}$ is strict endoscopic class, i.e., $\Pi_{\sigma^{+}, fin}$  does not contribute in  $H_{!}^{3, 0}(M_{G}, \mathbb{C})$ and $H_{!}^{0, 3}(M_{G}, \mathbb{C})$ under the isomorphism in (\ref{4}). It sufficient to  prove that $\Pi_{\sigma^{+}, fin}$ can not be appear as a finite part of any irreducible cuspidal automorphic representation $\Pi^{'} \cong \Pi^{'}_{\infty} \otimes \Pi_{\sigma^{+}, fin}$ of $GSp(4, \mathbb{A})$ such that its irreducible archimedean component, $\Pi^{'}_{\infty}$, is isomorphic with either to the discrete series representation $\pi_{\Lambda + \rho}^{(3, 0)}$ or $\pi_{\Lambda + \rho}^{(0, 3)}$, where $\Lambda = (-m, -l, c)$ in the case $\pi_{\Lambda + \rho}^{(3, 0)}$ and $\Lambda = (l + 3, m + 3, -c)$ in the case $\pi_{\Lambda + \rho}^{(0, 3)}$.\\

To prove the above claim we need the formula in (\ref{f}) which is a part of a main result by R. Weissauer in \cite{14}. Before proving this claim, we need to state the formula (\ref{f}). First we have the following definition as in \cite{11}.

\begin{definition}
\label{def1}
 If $\pi = \otimes_{\upsilon} \pi_{\upsilon}$ and $\pi^{'} = \otimes_{\upsilon} \pi_{\upsilon}^{'}$ are cuspidal automorphic representations of $GSp(4, \mathbb{A})$, then we say $\pi$ and $\pi^{'}$ are weakly equivalent if $\pi_{\upsilon} \cong \pi_{\upsilon}^{'} $ for almost all places.
\end{definition}

\begin{remark}
\label{r10}
 Note that we don't have the strong multiplicity one theorem for cuspidal automorphic representations of $GSp(4, \mathbb{A})$. It means that the weakly equivalent does not imply that $\pi$ is isomorphic to $\pi^{'}$. However, if $\pi$ and $\pi^{'}$ are generic, then they satisfy the strong multiplicity one for $GSp(4, \mathbb{A})$.
\end{remark}

Now we are following \cite{14} (Theorem 5.2, p. 186). Let $\pi$ be an irreducible cuspidal automorphic representation of $G(\mathbb{A})$ such that $\pi_{fin}$ contributes in $H^{3}_{!}(M_{G}, \mathbb{V}(\lambda))$. Moreover, assume that $\pi_{fin}$ is constructed as a finite part of  $\Pi_{\sigma^{+}}$ such that $\sigma^{+}$ is the positive extension of a representation of $GSO(2, 2)(\mathbb{A})$. This means that $\pi$ is weak endoscopic lift in terms of \cite{14}. If $\pi^{'} = \otimes \pi_{\upsilon}^{'}$ is weakly equivalent to $\pi$, then the multiplicity of $\pi^{'}$ in the discrete spectrum is equal to
\begin{equation}
\label{f} m(\pi^{'}) = \frac{1}{2}(1 + (-1)^{e(\pi^{'})}),
\end{equation}
where $e(\pi^{'})$ denotes the finite number of representations $\pi_{\upsilon}^{'}$ which do not have local Whittaker model.\\

Now we can state the main result in this section.
\begin{theorem}
\label{th5}
Fix $\lambda = (l, m)$ (Note that we consider the convention $c = -l-m$). Assume that $l - m \geq 2$ and $m \geq 2$. We consider the fixed local system $\mathbb{V}(\lambda)$ on $M_{G}(\mathbb{C})$. Let $\pi(\tau_{1}, \tau_{2})$ be an irreducible cuspidal automorphic representation $\pi(\tau_{1}, \tau_{2})$ of $H^{\circ}(\mathbb{A})$. We assume that $\tau_{i}$'s are infinite dimensional, i.e., $\pi(\tau_{1}, \tau_{2})$ is generic. We assume $\pi(\tau_{1}, \tau_{2}) \cong \pi_{\infty} \otimes \pi_{fin}$ such that $\pi_{\infty}^{*}$ (as a $(\mathfrak{h}, K_{H})$-module) be isomorphic with either $\pi_{l + m + 4, l - m + 2}(c)$ or $\pi_{l + m + 4, m - l -2}(c)$. Then $\Pi_{\sigma^{+}, fin}$ which is constructed above is an strict endoscopic class of $H_{!}^{3}(M_{G}, \mathbb{V}(\lambda))$  under the isomorphism in (\ref{4}).
\end{theorem}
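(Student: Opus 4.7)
The plan is to leverage the construction of $\Pi_{\sigma^+}$ from the previous sections together with Weissauer's multiplicity formula \eqref{f} to rule out any cuspidal automorphic representation with the same finite part but holomorphic (or anti-holomorphic) archimedean component. Concretely, the construction already places $\Pi_{\sigma^+,fin}$ inside either $H^{1,2}_!(M_G,\mathbb{V}(\lambda))$ or $H^{2,1}_!(M_G,\mathbb{V}(\lambda))$ via Corollaries \ref{c1} and \ref{c2} together with the isomorphism in \eqref{4}, so the only thing left to show is that $\Pi_{\sigma^+,fin}$ does not contribute to $H^{3,0}_!(M_G,\mathbb{V}(\lambda)) \oplus H^{0,3}_!(M_G,\mathbb{V}(\lambda))$.

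I would argue by contradiction: suppose there exists an irreducible cuspidal automorphic representation $\Pi' = \Pi'_\infty \otimes \Pi_{\sigma^+,fin}$ of $G(\mathbb{A})$ with $\Pi'_\infty$ isomorphic to one of the two holomorphic discrete series $\pi^{(3,0)}_{\Lambda+\rho}$ or $\pi^{(0,3)}_{\Lambda+\rho}$. Since $\Pi'$ and $\Pi_{\sigma^+}$ agree at every finite place, they are weakly equivalent in the sense of Definition \ref{def1}, and $\Pi_{\sigma^+}$ is a weak endoscopic lift coming from $H^\circ = GSO(2,2)$ via the positive extension $\sigma^+$; therefore Weissauer's formula \eqref{f} applies to $\Pi'$ and computes its multiplicity in the discrete spectrum as $m(\Pi') = \tfrac{1}{2}\bigl(1+(-1)^{e(\Pi')}\bigr)$, where $e(\Pi')$ is the number of local places at which $\Pi'_\upsilon$ fails to admit a Whittaker model.

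The key input is then an explicit parity count for $e(\Pi')$ versus $e(\Pi_{\sigma^+})$. By Corollary \ref{c2} the representation $\Theta(\sigma^+)$ is generic, so every local component of $\Pi_{\sigma^+}$ (including the archimedean one, which is a non-holomorphic discrete series $\pi^{(1,2)}_{\Lambda+\rho}$ or $\pi^{(2,1)}_{\Lambda+\rho}$) admits a Whittaker model; this gives $e(\Pi_{\sigma^+})=0$ and $m(\Pi_{\sigma^+})=1$, consistent with $\Pi_{\sigma^+}$ being cuspidal. The finite components of $\Pi'$ coincide with those of $\Pi_{\sigma^+}$ and hence all admit local Whittaker models, but the archimedean component $\Pi'_\infty$, being a holomorphic or anti-holomorphic discrete series of $GSp(4,\mathbb{R})^+$, is well-known to fail to admit a Whittaker model. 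Consequently $e(\Pi')=1$, and \eqref{f} forces $m(\Pi')=0$, contradicting the existence of $\Pi'$ in the discrete spectrum.

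The main obstacle I anticipate is a careful verification that the archimedean fact is applied correctly: one must be precise that $\pi^{(3,0)}_{\Lambda+\rho}$ and $\pi^{(0,3)}_{\Lambda+\rho}$ are exactly the two discrete series of $GSp(4,\mathbb{R})^+$ with infinitesimal character $\chi_{\lambda+\rho}$ that are non-generic (so that $e(\Pi')=1$ rather than $0$), while the two non-holomorphic discrete series are generic (so that $e(\Pi_{\sigma^+})=0$). Once this dichotomy is in place and Weissauer's formula is invoked in the correct weak endoscopic packet, the parity mismatch between $e(\Pi_{\sigma^+})$ and $e(\Pi')$ yields the vanishing $m(\Pi')=0$, and the definition \eqref{I7} of the strict endoscopic part then identifies $\Pi_{\sigma^+,fin}$ as a strict endoscopic cohomology class, completing the argument.
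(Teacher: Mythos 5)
Your proposal follows essentially the same route as the paper: it establishes that $\Pi_{\sigma^{+}, fin}$ lies in the non-holomorphic part via Corollaries \ref{c1} and \ref{c2}, then argues by contradiction that any $\Pi' = \Pi'_{\infty}\otimes \Pi_{\sigma^{+}, fin}$ with holomorphic or anti-holomorphic archimedean component is weakly equivalent to the generic lift $\Pi_{\sigma^{+}}$, so all its finite components admit Whittaker models while $\Pi'_{\infty}$ does not, giving $e(\Pi')=1$ and $m(\Pi')=0$ by Weissauer's formula (\ref{f}). This matches the paper's proof; your extra parity check that $e(\Pi_{\sigma^{+}})=0$ is a harmless consistency observation.
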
 

\begin{proof}
We already show that $\Pi_{\sigma^{+}, fin}$ contributes in the non-holomorphic terms of the Hodge structure of $H_{!}^{3}(M_{G}, \mathbb{V}(\lambda))$. We need only to prove our claim which says that  $\Pi_{\sigma^{+}, fin}$ can not be appear as a finite part of any irreducible cuspidal automorphic representation $\Pi^{'} \cong \Pi^{'}_{\infty} \otimes \Pi_{\sigma^{+}, fin}$ of $GSp(4, \mathbb{A})$ such that its irreducible archimedean component $\Pi^{'}_{\infty}$ be isomorphic with either the discrete series representation $\pi_{\Lambda + \rho}^{(3, 0)}$ or $\pi_{\Lambda + \rho}^{(0, 3)}$. To prove this let $\Pi^{'}$ be isomorphic with $\pi_{\lambda}^{(3, 0 )} \otimes \Pi_{\sigma^{+}, fin}$ or $\pi_{\lambda}^{(0, 3)} \otimes \Pi_{\sigma^{+}, fin}$. It is clear that $\Pi^{'}$ is weakly equivalence with $\Pi_{\sigma^{+}}$ because they have same finite parts. Moreover, Corollary \ref{c2} implies that $\Pi_{\sigma^{+}}$ is generic as the irreducible constituent of $\Theta(\sigma^{+})$. Because $\Pi_{\sigma^{+}}$ is generic, then all of its local non-archimedean components $\Pi_{\sigma^{+}, \upsilon}$ are generic, i.e., admit local Whittaker models. This means that at $\Pi^{'}$ all of its finite components have local Whittaker models. But we assume that the infinite part of $\Pi^{'}$ is $\pi_{\Lambda + \rho}^{(3, 0)}$ or $\pi_{\Lambda + \rho}^{(0, 3)}$. It is well-known that these two discrete series do not admit local Whittaker model. Therefore $e(\Pi^{'})$ in the formula (\ref{f}) is one. This implies that $m(\Pi^{'}) = 0$ which means that there are not cuspidal automorphic representations of the forms $\pi_{\lambda}^{(3, 0 )} \otimes \Pi_{\sigma^{+}, fin}$ or $\pi_{\lambda}^{(0, 3)} \otimes \Pi_{\sigma^{+}, fin}$. Therefore, $\Pi_{\sigma^{+}, fin}$ is strict endoscopic.
\renewcommand{\qedsymbol}{}     
\end{proof}

The above construction has a nice description in terms of modular forms. By the classical theory of modular forms of $GL(2)$ (\cite{15}), we know that there is a one to one correspondence between irreducible infinite dimensional cuspidal automorphic representations of $GL(2, \mathbb{A})$ and newforms (i.e., normalized Hecke eigenforms) on the upper half plane $\mathfrak{H}_{1}$. Therefore the fixed $\pi(\tau_{1}, \tau_{2})$ (as an irreducible cuspidal automorphic representation $\pi(\tau_{1}, \tau_{2})$ of $H^{\circ}(\mathbb{A})$) can be corresponded to a pair of newforms $(f_{1}, f_{2})$. Now if we assume that $f_{1}$ has the weight $l + m + 4 $ and $f_{2}$ has the weight either $l - m + 2$ or $m - l - 2$, then $\pi(\tau_{1}, \tau_{2})$ will satisfies the conditions in Theorem \ref{th5}. Therefore the above construction can be considered as a map

\begin{equation}
\label{15}
\Theta: (f_{1}, f_{2}) \longleftrightarrow \pi(\tau_{1}, \tau_{2}) \longmapsto \Pi_{\sigma^{+}, fin}.
\end{equation}  
  
This provides an injective map between the space of pairs of newforms of $GL(2)$ into the strict endoscopic part of $H_{!}^{3}(M_{G}, \mathbb{V}(\lambda))$ which is denoted by $H^{3}_{\text{End}^{\text{s}}}(M_{G}, \mathbb{V}(\lambda))$. The injectivity of the above map is a consequence of Corollary \ref{c2} that states $\sigma^{+}$ is the only non-zero global theta lift to $GSp(4, \mathbb{A})$. Note that for the injectivity of $\Theta$ we should assume that $f_{2}$ has the weight either $l - m + 2$ or $m - l -2$. This is possible that $\Pi_{\sigma^{+}, fin}$  appears as the finite part of two cuspidal automorphic representations with infinite constituents isomorphic with $\pi_{\Lambda + \rho}^{(3, 0)}$ and $\pi_{\Lambda + \rho}^{(0, 3)}$ respectively. Therefore $\Pi_{\sigma^{+}, fin}$ could contribute in $H_{!}^{1, 2}(M_{G}, \mathbb{V}_{l, m})$ and $H_{!}^{2, 1}(M_{G}, \mathbb{V}_{l, m})$ simultaneously. \\

\noindent However, our main problem was the study the strict endoscopic part of \linebreak $H_{!}^{3}(X_{N}, \mathbb{V}_{l, m})$ which is denoted by    
$H^{3}_{\text{End}^{\text{s}}}(X_{N}, \mathbb{V}_{l, m})$. Although Theorem \ref{th5} provides non-zero strict endoscopic classes, but we lose our control on the level structure. Explicitly, if we want to construct a non-zero strict endoscopic of $H_{!}^{3}(X_{N}, \mathbb{V}_{l, m})$, under the isomorphism in (\ref{5}), we have to find the smallest positive integer $N$ such that $(\Pi_{\sigma^{+}, fin}) ^{K(N)} \neq 0$. Then, $(\Pi_{\sigma^{+}, fin})^{K(N)}$ contributes non-trivially in the strict endoscopic part of $H_{!}^{3}(X_{N}, \mathbb{V}_{l, m})$. We are going to call the positive integer $N$ the level of the cuspidal automorphic representation
$\Pi_{\sigma^{+}}$. Note that this problem occurs because of using Shimura varieties method. This means that we lose our control on the level groups because of taking direct limit over all different levels.\\

\noindent This difficulty is solvable if we have an explicit way to calculate the level of the cuspidal automorphic representation
$\Pi_{\sigma^{+}}$ according to input data in the map $\Theta$ in (\ref{15}). This means that we are going to assume that $f_{i}$'s have conductor $a(f_{i})$ in the sense of classical modular forms of $GL(2)$ (\cite{18}). This is equivalent to assume $\sigma = \pi(\tau_{1}, \tau_{2})$ has conductor $(a(\tau_{1}), a(\tau_{2}))$. Now we want to now that what is the level of $\Pi_{\sigma^{+}}$?. If we determine the level of  $\Pi_{\sigma^{+}}$, then $(\Pi_{\sigma^{+}, fin})^{K(N)}$ contributes in the strict endoscopic part of $H_{!}^{3}(X_{N}, \mathbb{V}_{l, m})$. This is the next main result in this paper where will be studied in the next section. 

\section{Local theory and level structure}
\label{sec7}

This section contains the answer of the question mentioned in the previous section. We assume that $\pi(\tau_{1}, \tau_{2})$ is a generic cuspidal automorphic representation of $H^{\circ}(\mathbb{A})$. Also $\Pi_{\sigma^{+}}$ is the irreducible constituent of $\Theta(\sigma^{+})$ where constructed in Section \ref{sec6}. According to Section \ref{sec5}, we have $\Pi_{\sigma^{+}} = \otimes \theta((\sigma^{+}_{\upsilon})^{*}) = \otimes \theta(\sigma^{+}_{\upsilon})^{*} $. This means that $\Pi_{\sigma^{+}}$ is completely determined by local theta correspondence for dual reductive pair $(H, G)$ over all local fields $\mathbb{Q}_{\upsilon}$. This is a central statement in this section. Because if we are able to define local levels such that they are related to the global level $N$, then we can calculate $N$ according to local data.\\

\noindent Explicitly, we need to calculate the level of the cuspidal automorphic representation $\Pi_{\sigma^{+}}$. We know that $\Pi_{\sigma^{+}} = \bigotimes_{\upsilon} \Pi_{\sigma^{+}, \upsilon}$. According to theory of local newforms of $GSp(4)$ (\cite{16}), we can define the local level $N_{\upsilon}$ for each irreducible admissible representation $\Pi_{\sigma^{+}, \upsilon}$ over $\mathbb{Q}_{\upsilon}$ such that almost all $N_{\upsilon}$ are 1 and $N = \Pi_{\upsilon} N_{\upsilon}$. Therefore, If we are able to calculate the local level $N_{\upsilon}$ for every place, then we are able to get the desired answer in the global case. This means that we should answer the following local analogous question. If $\sigma^{+}_{\upsilon}$ has level $(a(\tau_{1, \upsilon}), a(\tau_{2, \upsilon}))$ at the finite place $\upsilon$ then what is the level of $\theta((\tau_{1, \upsilon}, \tau_{2, \upsilon}))$?.\\

To answer this question for admissible representations over local fields, we have the following difficulties:

\begin{itemize}

\item[(i)] First of all, we need to review the generalization of theory of newforms for the admissible representations of $GSp(4, \mathbb{Q}_{\upsilon})$, where $\upsilon$ is a non-archimedean local place. This means that we need to have a well-defined definition for conductor of an irreducible admissible representation of $G(\mathbb{Q}_{\upsilon})$. For this problem we are going to use the theory of paramodular vectors based on the work done by B. Roberts and R. Schmidt (\cite{16}). They present the definition of paramodular representations which are the generalization of local newforms of $GL(2)$ for the reductive algebraic group $ G = GSp(4)$.\\

\item[(ii)] Secondly, we need a precise correspondence between the admissible representations of $(GL(2, \mathbb{Q}_{\upsilon}) \times GL(2, \mathbb{Q}_{\upsilon})) / \mathbb{Q}_{\upsilon}^{\times}$ to admissible representations of the $GSp(4, \mathbb{Q}_{\upsilon})$ under the local theta correspondence. This part was done by W. Gan and S. Takeda (\cite{17}).

\item[(iii)] Finally we need a precise matching between the above results to find a proper answer for finding the level of $\theta((\tau_{1, \upsilon}, \tau_{2, \upsilon}))$, where $(\tau_{1, \upsilon}, \tau_{2, \upsilon})$ is an admissible representation of $(GL(2, \mathbb{Q}_{\upsilon}) \times GL(2, \mathbb{Q}_{\upsilon}))/ \mathbb{Q}_{\upsilon}^{\times}$ over the non-archimedean local place $\upsilon$ of the level $(a(\tau_{1, \upsilon}), a(\tau_{2, \upsilon}))$.        

\end{itemize}

\subsection{Paramodular admissible representations of $GSp(4, \mathbb{Q_{\upsilon}})$}
Here we are going to review briefly the theory of local newforms of $GSp(4)$ presented by B. Roberts and R. Schmidt in \cite{16}. Let $F = \mathbb{Q}_{\upsilon}$ be a non-archimedean local field. The ring of integer of $F$ will be denoted by $\mathcal{O}$, and $\mathcal{P}$ is its maximal ideal\footnote{Note that because we are working on the field $\mathbb{Q}$, so the finite places are prime numbers and the generator of $\mathcal{P}$ is the corresponded prime number.}. Let $q = \# \mathcal{O}/ \mathcal{P}$ be the cardinality of the residue field. We let $\varpi$ denote a fixed generator of $\mathcal{P}$. The normalized valuation $\vartheta$ on $F$ has the property that $\vartheta(\varpi) = 1$. Moreover, for the normalized absolute value we have $\nu(\varpi) = \mid \varpi \mid = q^{-1}$.\\

For $m \geq 1$ let $K_{2}(m)$ be the subgroup of $GL(2, \mathcal{O})$ consisting of matrices of the form
\begin{equation}
\begin{pmatrix}
a & b \\
c & d  \end{pmatrix}, \quad \text{ $c \equiv 0$ (mod $\mathcal{P}^{m}$)}, \quad d \in 1 + \mathcal{P}^{m}. 
\end{equation}

\noindent Classically, we can state the following well-known theorem of Casselman (\cite{19}).

\begin{theorem}
\label{th6}
Let $(\tau, V)$ be an infinite dimensional irreducible admissible representation of $GL(2, F)$ and $\psi$ is a fixed additive character of $F$. Let $V^{(m)}$ denote the space of $K_{2}(m)$ fixed vectors in $V$. Then:
\begin{itemize}
\item[(i)] There exists an $m \geq 0$ such that $V^{(m)} \neq 0$. Let $a(\tau)$ denote the least non-negative integer with this property. Then the $\varepsilon$-factor of $\tau$ has the form 
\begin{equation*}
\varepsilon(s, \tau, \psi) = cq^{(2c(\psi) - c(\tau))s}
\end{equation*}
for a constant $c \in \mathbb{C}^{*}$ which is independent of $s$.

\item[(ii)] For all $m \geq a(\tau)$ we have $\dim(V^{(m)})$ = $m - a(\tau) + 1$. 

\end{itemize} 
\end{theorem}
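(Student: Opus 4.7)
The plan is to pass to the Kirillov/Whittaker model of $\tau$, translate the $K_2(m)$-invariance condition into concrete conditions on functions $\xi : F^{\times} \to \mathbb{C}$, and then combine smoothness of $\tau$ with the Bernstein--Zelevinsky classification of infinite-dimensional irreducible admissible representations of $GL(2,F)$. The existence of the Whittaker model $\mathcal{W}(\tau,\psi)$ hinges precisely on $\tau$ being infinite dimensional, which is where that hypothesis enters.

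First I would set up the Kirillov model, on which the mirabolic subgroup acts by $\tau\bigl(\begin{smallmatrix} a & b \\ 0 & 1\end{smallmatrix}\bigr)\xi(x) = \psi(bx)\xi(ax)$ and the center by $\omega_{\tau}$. A vector $\xi$ is $K_2(m)$-fixed iff it is invariant under the center, under the unipotents $\bigl(\begin{smallmatrix} 1 & b \\ 0 & 1\end{smallmatrix}\bigr)$ with $b \in \mathcal{O}$ (forcing the support of $\xi$ to lie in the inverse different of $F$), under the diagonals $\bigl(\begin{smallmatrix} a & 0 \\ 0 & 1\end{smallmatrix}\bigr)$ with $a \in 1 + \mathcal{P}^{m}$ (forcing $\xi$ to be constant on $(1+\mathcal{P}^{m})$-cosets of $F^{\times}$), and finally under one Atkin--Lehner type element tied to level $m$. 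This last condition is where the real arithmetic content lives.

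Part (i) is then immediate from smoothness: every vector in $V$ is stabilized by some open compact subgroup of $GL(2,\mathcal{O})$, hence eventually by $K_2(m)$, so $a(\tau)$ is well-defined as a non-negative integer. The $\varepsilon$-factor formula drops out of the local functional equation applied to a newvector: the local zeta integrals attached to a newvector and its contragredient are explicit monomials in $q^{-s}$, and the functional equation forces $\varepsilon(s,\tau,\psi)$ to be a scalar multiple of $q^{(2c(\psi) - a(\tau))s}$. In effect this is taken as the defining property of the conductor $a(\tau)$.

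For part (ii) I would split by the classification of $\tau$. For an irreducibly induced principal series $\tau = \mathrm{Ind}_B^{GL(2)}(\chi_1 \boxtimes \chi_2)$ one uses $a(\tau) = a(\chi_1) + a(\chi_2)$ together with a direct $V^{(m)}$-computation in the induced model via the Iwasawa decomposition. Twists of the Steinberg representation are handled as subquotients of reducible principal series. Supercuspidals are realized by compact induction from an open compact-mod-center subgroup, and $K_2(m)$-invariants are read off via a double-coset decomposition. In all cases the unifying observation is that any vector of level $m > a(\tau)$ is obtained from the newvector by at most two independent level-raising operations (roughly $\xi(x) \mapsto \xi(x)$ and $\xi(x) \mapsto \xi(\varpi^{-1}x)$), so $\dim V^{(m)} - \dim V^{(m-1)} = 1$, giving the linear formula. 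The main obstacle is the supercuspidal case, where the Kirillov model is supported only on $\mathcal{O}^{\times}$ and the dimension count must be pushed through the compact-induction realization rather than through the open Borel.
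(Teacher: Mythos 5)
The paper offers no proof of this statement to compare against: it is quoted verbatim as a ``well-known theorem of Casselman'' and referenced to \cite{19}. Your sketch follows the standard route to that theorem (Kirillov model, zeta integrals and the local functional equation for the $\varepsilon$-factor, a case analysis through the classification for the dimension count), and in outline most of it is the right strategy.

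There is, however, one step that fails as written. You claim the existence part of (i) is ``immediate from smoothness: every vector in $V$ is stabilized by some open compact subgroup of $GL(2,\mathcal{O})$, hence eventually by $K_2(m)$.'' That inference is invalid: being fixed by a small open compact subgroup never implies being fixed by a larger one, and the groups $K_2(m)$ do \emph{not} shrink to the identity as $m \to \infty$ --- for every $m$ they contain the fixed compact subgroup $\left\{\left(\begin{smallmatrix} a & b \\ 0 & 1\end{smallmatrix}\right) : a \in \mathcal{O}^{\times},\ b \in \mathcal{O}\right\}$, under which a generic smooth vector is not invariant. Existence genuinely requires the Kirillov model you set up in your first paragraph: one must exhibit a function invariant under this non-shrinking upper-triangular part (e.g.\ the characteristic function of $\varpi^{c(\psi)}\mathcal{O}^{\times}$, which lies in $C_c^{\infty}(F^{\times})$ and hence in the Kirillov space), and only then does smoothness dispose of the remaining, genuinely shrinking directions $\left(\begin{smallmatrix}1 & 0 \\ \mathcal{P}^{m} & 1\end{smallmatrix}\right)$ and $\mathrm{diag}(1,\,1+\mathcal{P}^{m})$, via the Iwahori-type factorization of $K_2(m)$ into its lower-unipotent and upper-triangular parts. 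A second, smaller imprecision: you cannot ``take the $\varepsilon$-factor formula as the defining property of the conductor $a(\tau)$.'' The integer $a(\tau)$ is defined as the minimal level of a nonzero fixed vector, and the assertion of (i) is the nontrivial identity between that integer and the exponent appearing in $\varepsilon(s,\tau,\psi)$; it is proved by computing the zeta integrals of the newvector and of its translate under $\left(\begin{smallmatrix}0 & 1 \\ -\varpi^{a(\tau)} & 0\end{smallmatrix}\right)$ and comparing via the functional equation, not by definition.
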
   

\begin{remark}
\label{r11}
The positive integer $a(\tau)$ is called the conductor (level) of $\tau$. Moreover, if $a(\tau) = 0$ the $\tau$ is called unramified representation (Spherical representation). Note that $\dim(V^{(a(\tau))})$ = $1$. Also, a non-zero element in the one dimensional space $V^{a(\tau)}$ is called a local newform. A very good reference for details on this topic is \cite{18} by R. Schmidt.
\end{remark}

Table \ref{table3} presents conductors of irreducible admissible representations of $GL(2, F)$ defined in Theorem \ref{th6}. Note that $a(\chi)$ denotes the conductor of the character $\chi$ and $\St_{GL(2)}$ is the Steinberg representation of $GL(2)$.\\

 \begin{table}[H]
 \centering
\begin{tabular}{ | l | l | } 
    \hline
    representation $\tau$ & $a(\tau)$\\
    \hline
    $\tau(\chi_{1}, \chi_{2})$, $\chi_{1}\chi_{2}^{-1} \neq \mid \mid^{\pm 1}$ & $a(\chi_{1}) + a(\chi_{2})$ \\
    \hline
    $\chi \St_{GL(2)}$, $\chi$ unramified & $1$ \\
    \hline
    $\chi \St_{GL(2)}$, $\chi$ ramified & $2a(\chi)$ \\
    \hline
    $\tau$ is supercuspidal & $a(\tau) \geq 2$ \\
    \hline
    \end{tabular}
    \caption{Conductor of irreducible admissible representations of $GL(2, F)$}
    \label{table3}
    \end{table}

Now let $\tau = \tau_{\infty} \otimes \tau_{fin}$ be an automorphic representation of $GL(2, \mathbb{A})$. It is well-known that $\tau_{\upsilon}$'s are unramified for almost all places. Let $S$ denotes the set of finite places such that $\tau_{\upsilon}$'s are ramified. The we can consider $a(\tau) = \prod_{\upsilon \in S} \mathcal{P}^{a(\tau_{\upsilon})}$. $a(\tau)$ is called the level of the automorphic representation $\tau$. We can replace the maximal ideal $\mathcal{P}$ with the corresponding prime number $p$ as its generator. In this case $a(\tau)$ can be written as positive integer $ N_{\tau} = \prod_{\upsilon \in S} p^{a(\tau_{\upsilon})}$. With this concept it is clear that we want to fix a generic cuspidal automorphic representation $\pi(\tau_{1}, \tau_{2}) \cong \tau_{1} \otimes \tau_{2}$ of $H^{\circ}(F)$ of the level $(a(\tau_{1}), a(\tau_{2}))$ and find the level of its theta lift $\theta(\tau_{1}, \tau_{2})$ to $G(F)$, where $F = \mathbb{Q}_{\upsilon}$ is a non-archimedean local field. But we need to have a well-defined definition of level of an admissible representation of $G(F)$.\\

Now we are going to state the concept of level for an admissible representation of $G(F) = GSp(4, F)$ presented by B. Roberts and R. Schmidt in \cite{16}. Fix the local non-archimedean field $F = \mathbb{Q}_{\upsilon}$. For any non-negative integer $n$ we define $K(\mathcal{P}^{n})$ to be the subgroup of all $k$ in $GSp(4, F)$ such that $\det(k) \in \mathcal{O}^{\times}$ and 
\begin{equation}
\label{16}
k \in \begin{pmatrix} \mathcal{O} &\mathcal{O} & \mathcal{O} & \mathcal{P}^{-n}\\ 
                 \mathcal{P}^{n}  & \mathcal{O} & \mathcal{O} & \mathcal{O}\\
                  \mathcal{P}^{n} & \mathcal{O}  & \mathcal{O} & \mathcal{O} \\
                  \mathcal{P}^{n}  & \mathcal{P}^{n} & \mathcal{P}^{n}  & \mathcal{O}    
                                  \end{pmatrix}.
\end{equation}
$K(\mathcal{P}^{n})$ is called the paramodular group of level $\mathcal{P}^{n}$. Now we have the following definition.\\

\begin{definition}
\label{def2}
Let $(\pi, V)$ be an admissible representation of $G(F)$, where $F = \mathbb{Q}_{\upsilon}$ is a non-archimedean local field and $\pi$ has trivial central character. For a non-negative integer $n \geq 0$, let $V(n)$ denote the subspace of $V$ consisting of all vectors $v \in V$ such that $\pi(k)v = v$ for all $k \in K(\mathcal{P}^{n})$. A non-zero vector in $V(n)$ is called a paramodular vector of level $n$. Moreover, if there is a non-negative integer $n$ such that $V(n) \neq 0$, then $\pi$ is called a paramodular representation. If $\pi$ is a paramodular representation, we denote its minimal paramodular level of $\pi$ by $a(\pi)$ and it is called level of $\pi$. 
\end{definition}

First of all if $\pi$ is a paramodular representation of level $0$, it means that $V(0) \neq 0$. But $K(\mathcal{P}^{0}) = GSp(4, \mathcal{O})$. This means that $\pi$ is an unramified representation (Spherical representation). Moreover, there are representations of $GSp(4, F)$ such that they are not paramodular. The complete list of these representations can be found in \cite{16} (p. 121).\\

The first main result is Theorem 7.5.1. (Uniqueness at the Minimal Level) in \cite{16}. This theorem says:

\begin{theorem}[B. Roberts and R. Schmidt]
\label{th7}
Let $(\pi, V)$ be an irreducible admissible representation of $GSp(4, F)$ with trivial central character. Assume the $\pi$ is paramodular, and let $a(\pi)$ be the level of $\pi$. Then $\dim(V(a(\pi))) = 1$.

\end{theorem}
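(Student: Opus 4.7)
The plan is to combine the classification of irreducible admissible representations of $GSp(4, F)$ with a level-raising and level-lowering analysis of paramodular vectors. First, I would set up the structural tools: there are canonical level-raising operators $\eta$ and $\theta, \theta'$ sending $V(n)$ into $V(n+1)$, defined as Hecke-type sums over cosets, together with an Atkin-Lehner involution $u_{n}$ on each $V(n)$. These turn $\bigoplus_{n} V(n)$ into a graded module whose structure at low levels is rigidly constrained, and in particular allow one to compare dimensions across levels via explicit kernel/cokernel computations for $\eta, \theta, \theta'$.

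Next, I would split into the generic and non-generic cases. In the generic case, where $\pi$ admits a Whittaker model, the approach is to attach to each $v \in V(n)$ its Whittaker function $W_{v}$ and study the local zeta integral $Z(s, W_{v}) \in \mathbb{C}(q^{-s})$. The main input is that $Z(s, W_{v})$ is always a polynomial multiple of the local $L$-factor $L(s, \pi)$, and that the normalization $Z(s, W_{v}) = L(s, \pi)$ characterizes a distinguished vector. Supposing for contradiction that $\dim V(a(\pi)) \geq 2$, I would produce a non-zero $v \in V(a(\pi))$ whose zeta integral vanishes; combining the local functional equation with the action of $u_{a(\pi)}$ should then force $v$ to lie in the image of a level-lowering operator, contradicting the minimality of $a(\pi)$.

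For the non-generic paramodular representations I would proceed through the classification list case by case. Most such $\pi$ appear as constituents of parabolically induced representations, and for these there is an explicit Iwasawa-type model in which $V(n)$ can be computed directly by enumerating the relevant $P \backslash G / K(\mathcal{P}^{n})$ double cosets for the appropriate parabolic $P$. The Saito--Kurokawa-type lifts and a handful of other exceptional cases then require separate treatments mixing this induced-model bookkeeping with detailed information about the reducibility points of the ambient principal series.

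The main obstacle is precisely the non-generic case: without a Whittaker model the clean zeta-integral argument is unavailable, and one is pushed into a long classification-driven analysis. Even in the generic case, one must carefully track how the Atkin-Lehner and level-raising operators interact with the functional equation, since a misnormalization would spoil the contradiction at the minimal level. These interlocking difficulties are why the original Roberts-Schmidt proof occupies the bulk of \cite{16}.
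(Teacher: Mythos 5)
You should first be aware that the paper does not prove this statement at all: it is quoted verbatim as Theorem 7.5.1 (``Uniqueness at the Minimal Level'') of Roberts--Schmidt \cite{16} and used as a black box. So the relevant comparison is not with an argument in this paper but with the monograph-length proof in \cite{16}. Measured against that, your proposal correctly identifies the architecture of the actual proof --- the level-raising operators $\eta$, $\theta$, $\theta'$ and the Atkin--Lehner involutions $u_{n}$, the zeta-integral/Whittaker-model analysis in the generic case, and a classification-driven case analysis in the non-generic case --- but as written it is a research plan rather than a proof. Every step that carries weight is asserted, not established.

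Concretely, the gaps are these. (1) Your ``main input'' in the generic case, that $Z(s, W_{v})$ is a polynomial multiple of $L(s,\pi)$ for every paramodular $v$, is itself one of the principal theorems of \cite{16}; it rests on their $P_{3}$-theory and the $\eta$-principle, neither of which you construct. Without it, the contradiction you propose at the minimal level cannot be launched. (2) The step ``combining the local functional equation with the action of $u_{a(\pi)}$ should then force $v$ to lie in the image of a level-lowering operator'' is precisely the hard point: one must prove that a paramodular vector with vanishing zeta integral lies in the image of $\eta$ (which raises the level by $2$), and this is a delicate statement whose proof occupies several chapters. Saying it ``should'' follow is not an argument. (3) In the non-generic case, the problem is not merely ``a handful of exceptional cases'': one must first determine which non-generic representations are paramodular at all, compute $a(\pi)$ and $\dim V(n)$ for all $n$ in each of the families of the Sally--Tadi\'c classification via explicit double-coset enumeration, and handle the Saito--Kurokawa-type constituents where the induced-model bookkeeping interacts with reducibility points. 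None of this is carried out. Since the theorem is external to the paper, the honest courses of action are either to cite \cite{16} as the paper does, or to actually supply the computations you outline; the outline alone does not constitute a proof.
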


Theorem \ref{th7} shows that the paramodular representations of $GSp(4, F)$ at minimal paramodular level have the same property like the infinite dimensional representations of $GL(2, F)$. This provides us an appropriate candidates for newforms for $GSp(4, F)$. We should mention it again that all the admissible representations of $GSp(4, F)$ are not paramodular. However, the Theorem 7.5.4. (Generic Main Theorem) in \cite{16} shows that generic representations of $GSp(4, F)$ are paramodular.

\begin{theorem}[B. Roberts and R. Schmidt]
\label{th8} 
Let $(\pi, V)$ be a generic, irreducible, admissible representation of $GSp(4, F)$ with trivial central character. Then $\pi$ is paramodular.
\end{theorem}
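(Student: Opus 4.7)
The plan is to exploit that a generic irreducible admissible representation $\pi$ of $GSp(4,F)$ admits, uniquely up to scalar, a Whittaker model $\mathcal{W}(\pi,\psi)$ with respect to a fixed non-degenerate character $\psi$ of the standard maximal unipotent subgroup $U$. It suffices to exhibit a non-zero Whittaker function $W \in \mathcal{W}(\pi,\psi)$ that is fixed by some paramodular subgroup $K(\mathcal{P}^{n})$ as in (\ref{16}); then $\pi$ is paramodular in the sense of Definition \ref{def2}, and Theorem \ref{th7} automatically supplies the minimal level and one-dimensionality.

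My first step would be to fix any non-zero $W_{0}\in\mathcal{W}(\pi,\psi)$ and form the average
\[
W_{n}(g) \;=\; \int_{K(\mathcal{P}^{n})} W_{0}(gk)\, dk .
\]
By construction $W_{n}$ is right-invariant under $K(\mathcal{P}^{n})$, so the problem reduces to showing $W_{n}\neq 0$ for some $n$. To verify this I would use an Iwahori-type factorisation of $K(\mathcal{P}^{n})$ with respect to the upper-triangular Borel $B = TU$, writing every $k \in K(\mathcal{P}^{n})$ as a product of an element of $U \cap K(\mathcal{P}^{n})$, a torus element in $K(\mathcal{P}^{n})$, and an element of $\bar{U}\cap K(\mathcal{P}^{n})$ where $\bar{U}$ is the opposite unipotent. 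Evaluating $W_{n}$ at the identity and applying the transformation rule $W_{0}(ug)=\psi(u)W_{0}(g)$ converts the $U$-integration into an integral of the character $\psi$ over a compact open subset of $F$, reducing the non-vanishing question to a computation with a local zeta integral attached to $W_{0}$.

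The main obstacle I anticipate is that the paramodular subgroup $K(\mathcal{P}^{n})$ is \emph{not} compatible with the Iwahori decompositions associated with the Siegel or Klingen maximal parabolic of $GSp(4)$, so the naive averaging trick does not go through uniformly for generic subquotients of reducible parabolically induced representations. A case-by-case analysis along the Sally--Tadi\'{c} classification of generic irreducible admissible representations of $GSp(4,F)$ (irreducible principal series, generic subquotients of induced representations of Siegel and Klingen type, twists of the generalised Steinberg, and generic supercuspidals) is likely unavoidable. For the delicate reducible cases I would detect non-vanishing of $W_{n}$ via a local zeta integral, identifying its value as a non-zero multiple of the local $L$-factor $L(s,\pi)$, whose non-vanishing is automatic for generic $\pi$. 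The supercuspidal case, by contrast, should follow from compact support of matrix coefficients modulo the centre as soon as $n$ exceeds the conductor of the Whittaker datum $\psi$, since then the averaging integral becomes a finite sum of translates of $W_{0}$ that cannot identically cancel without contradicting uniqueness of the Whittaker model.
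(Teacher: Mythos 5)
First, a point of comparison: the paper does not prove this statement at all. Theorem \ref{th8} is quoted verbatim from \cite{16} (the ``Generic Main Theorem'', Theorem 7.5.4 there), and its proof is essentially the main content of that monograph; the present paper only cites it. So your proposal has to stand on its own, and it does not.

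The genuine gap is at the core of your strategy. Averaging $W_{0}$ over $K(\mathcal{P}^{n})$ certainly produces a right $K(\mathcal{P}^{n})$-invariant function, but $W_{n}$ is (up to volume normalization) the Whittaker function of the projection of $W_{0}$ onto $V(n)$, so its non-vanishing for some $n$ \emph{is} the theorem, and neither mechanism you offer establishes it. In the supercuspidal case, the claim that the finite sum ``cannot identically cancel without contradicting uniqueness of the Whittaker model'' is false: if $V(n)=0$ then $W_{n}$ vanishes identically for every $W_{0}$ and every $g$, and this contradicts nothing --- uniqueness of Whittaker models says nothing about which open compact subgroups have nonzero fixed vectors; compact support of matrix coefficients modulo the centre likewise gives no control over cancellation. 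The generic supercuspidal case is exactly the hard part of the theorem, and Roberts--Schmidt do not handle it by averaging: their existence argument runs through the $P_{3}$-theory (the analysis of the quotient $V_{Z^{J}}$ as a representation of the subgroup $P_{3}\subset GL(3,F)$) together with Casselman's $GL(2)$ newform theory and the level-changing operators, while the non-supercuspidal generic cases are settled by direct computation of paramodular vectors in the induced models. Your fallback for the reducible induced cases --- identifying $Z(s,W_{n})$ with a nonzero multiple of $L(s,\pi)$ --- is circular, since in \cite{16} the identity between the zeta integral of the newform and $L(s,\pi)$ is proved \emph{after}, and depends on, the existence of the newform. Finally, you correctly observe that $K(\mathcal{P}^{n})$ admits no Iwahori factorization adapted to the Borel, but flagging that obstruction does not remove it; as written, the proposal reduces the theorem to a non-vanishing statement that is equivalent to the theorem itself.
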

\begin{remark}
\label{r12}
We know that the local constituents of $\Pi_{\sigma^{+}, fin}$ are generic. Therefore they are paramodular representations. Therefore, there is a well-defined level concept to calculate the level of $\Pi_{\sigma^{+}, \upsilon} = \theta((\tau_{1, \upsilon}, \tau_{2, \upsilon}))^{*}$ based on $a(\tau_{i, \upsilon})$. 
\end{remark}

\begin{remark}
\label{r13}
Note that B. Roberts and R. Schmidt assume that our representation of $GSp(4, F)$ has trivial central character. This is only because of simplicity. The central character can be any unramified character. It is clear that after a twist we can assume that the representation has trivial central character. We are going to assume the central character is trivial. We want to apply their result on the local theta lift $\theta((\tau_{1}, \tau_{2}))$, where $\tau_{i}$'s are irreducible admissible representations of $GL(2, F)$ where $\omega(\tau_{1}) \omega(\tau_{2}) = 1$. Note that the central character of $\pi = \pi(\tau_{1}, \tau_{2})$ is $\omega(\tau_{1}) = \omega(\tau_{2})^{-1}$. But if we want to get trivial central character for $\theta((\tau_{1}, \tau_{2}))$, we have to assume $\omega(\tau_{1}) = \omega(\tau_{2})$ be trivial (according to Lemma \ref{le1}). This is our assumption in the rest of this paper. It means that we consider pairs of cuspidal automorphic representations of $H^{\circ}(\mathbb{A})$, $\pi = \pi(\tau_{1}, \tau_{2})$, such that both of $\tau_{i}$'s have trivial central characters. 
\end{remark}

\noindent There is a classification of all the irreducible admissible representations of $GSp(4, F)$. They can be divided into two different groups: Non-supercuspidal representations (Induced representations) and supercuspidal representations.\\

\noindent Non-supercuspidal representations are constructed as subrepresentations or quotient representations of induced representations. Any admissible representation of $GSp(4, F)$ which is not non-supercupidal is called supercuspidal. $GSp(4, F)$ has three conjugacy classes of parabolic subgroups denoted by $B$, $P$, and $Q$. The following list describes all the parabolic induction from these parabolic subgroups. 

\begin{itemize}
\item \textbf{Parabolic Induction from $B$}

Let $\chi_{1}$, $\chi_{2}$, and $\sigma$ be characters of $F^{\times}$. Consider the character of $B(F)$ given by 
\begin{equation*}
\begin{pmatrix} a & * & * & *\\ 
                             & b & * & *\\
                             &  & \eta a^{-1} & * \\
                             & & & \eta b^{-1}    
                                          \end{pmatrix} \longmapsto \chi_{1}(a)\chi_{2}(b)\sigma(\eta).
\end{equation*}
Let $(\chi_{1} \times \chi_{2} \rtimes \sigma, V)$ denote the standard induced representation of $G(F)$ given by the above character of $B(F)$. $V$ consists of all locally constant functions $f: G(F) \to \mathbb{C}$ with the transformation property 
\begin{equation*}
f(hg) = \mid a^{2}b\mid \mid \eta \mid^{-3/2} \chi_{1}(a)\chi_{2}(d)\sigma(\eta)f(g), 
\end{equation*}

for all $h = \begin{pmatrix} a & * & * & *\\ 
                             & b & * & *\\
                             &  & \eta a^{-1} & * \\
                             & & & \eta b^{-1}    
                                          \end{pmatrix} \in B(F) $. 
$G(F)$ acts on $V$ via right translation. Note that the central character of $\chi_{1} \times \chi_{2} \rtimes \sigma$ is $\chi_{1}\chi_{2}\sigma^{2}$.\\

\item \textbf{Parabolic Induction from $P$}\\ 

Let $(\pi , V_{\pi})$ be an admissible representation of $GL(2, F)$ and $\sigma$ be a character of $F^{\times}$. Then we can consider a representation of $P(F)$ given by
\begin{equation*}
\begin{pmatrix}
A & * \\
 & \eta A^{'}
\end{pmatrix} \longmapsto \sigma(\eta)\pi(A),
\end{equation*}
where $A \in GL(2, F)$ and $A^{'} = \begin{pmatrix} 0 & 1 \\
                                                    1 & 0 \end{pmatrix} (A^{t})^{-1}                                
                                                    \begin{pmatrix} 0 & 1 \\ 
                                                    1 & 0 \end{pmatrix}$.\\
Let $(\pi \rtimes \sigma, V)$ denote the standard induced representation of $G(F)$ given by above representation of $P(F)$. $V$ consists of all locally constant functions $f: G(F) \to \mathbb{C}$ with the transformation property 
\begin{equation*}
f(hg) = \mid \det(A)\eta \mid^{3/2}\sigma(\eta)\pi(A)f(g), 
\end{equation*}

for all $h = \begin{pmatrix}
A & * \\
 & \eta A^{'}
\end{pmatrix} \in P(F) $. $G(F)$ acts on $V$ via right translation.. Note that the central character of $\pi \rtimes \sigma$ is $\omega_{\pi}\sigma^{2}$.\\

\item \textbf{Parabolic Induction from $Q$}\\ 

Let $(\pi , V_{\pi})$ be an admissible representation of $GL(2, F)$ and $\chi$ be a character of $F^{\times}$. Then we can consider a representation of $Q(F)$ given by
\begin{equation*}
\begin{pmatrix}
t & * & * & * \\
 & a & b & * \\
 & c & d & * \\
 & & & \eta t^{-1}
\end{pmatrix} \longmapsto \chi(t)\pi(\begin{pmatrix} a & b\\
                                                          c & d \end{pmatrix}).
\end{equation*}

Let $(\chi \rtimes \pi, V)$ denote the standard induced representation of $G(F)$ given by above representation of $Q(F)$. $V$ consists of all locally constant functions $f: G(F) \to \mathbb{C}$ with the transformation property 
\begin{equation*}
f(hg) = \mid t^{2}(ad - cd)^{-1} \mid \chi(t)\pi(\begin{pmatrix} a & b\\
                                                          c & d \end{pmatrix})f(g), 
\end{equation*}

for all $h = \begin{pmatrix}
t & * & * & * \\
 & a & b & * \\
 & c & d & * \\
 & & & \eta t^{-1}
\end{pmatrix} \in Q(F) $. $G(F)$ acts on $V$ via right translation. Note that the central character of $\chi \rtimes \pi$ is $\omega_{\pi}\chi$.\\                                            
                                                     
\end{itemize}

Let $(\pi, V)$ be an irreducible admissible representation of $GSp(4, F)$. If $\pi$ is isomorphic to an irreducible subrepresentation or an irreducible quotient representation of a parabolically induced representation of one of the parabolic subgroups $B$, $P$, or $Q$, then $\pi$ is called non-supercuspidal representation. Jr. Sally  and M. Tadic classified all irreducible non-supercuspidal representations of $GSp(4, F)$ in \cite{20}. This classification is presented as Table A.1. in \cite{16} (p. 270). Moreover B. Roberts and R. Schmidt determine all irreducible admissible paramodular representations of $GSp(4, F)$. They provide Table A.12. in \cite{16} (p. 291) which gives us complete calculation of level of paramodular representations of $GSp(4, F)$. We are going to use these tables later in this section to calculate the level of $\theta((\tau_{1, \upsilon}, \tau_{2, \upsilon}))$ as a paramdular representation.\\

Now let $\pi = \pi_{\infty} \otimes \pi_{fin}$ be an automorphic representation of $GSp(4, \mathbb{A})$. It is well-known that $\pi_{\upsilon}$'s are unramified for almost all places. Also, assume that the finite local constituent $\pi_{\upsilon}$ is paramodular representation if $\upsilon \in S$, where $S$ denotes the set of finite places such that $\pi_{\upsilon}$'s are ramified. Let \linebreak $a(\pi) = \prod_{\upsilon \in S} \mathcal{P}^{a(\pi_{\upsilon})}$. $a(\pi)$ is called the level of the automorphic representation $\pi$. Note we can replace the maximal ideal $\mathcal{P}$ with the corresponding prime number $p$ as its generator. In this case the level can be written as $ N_{\pi} = \prod_{\upsilon \in S} p^{a(\pi_{\upsilon})}$.

\subsection{Local theta correspondence for $(GSO(2, 2)(F), GSp(4, F))$}
We are going to study the second difficulty in this part. Consider the dual reductive pair $(GSO(2, 2)(F), GSp(4, F))$ over $F$, where $F = \mathbb{Q}_{\upsilon}$ is a non-archimedean local field. We know that $GSO(2, 2)(F)$ is isomorphic to \linebreak $(GL(2, F) \times GL(2, F))/ \mathbb{G}_{m, F}$. Therefore, any irreducible admissible representation of $GSO(2, 2)(F)$ is isomorphic with a pair $\pi(\tau_{1}, \tau_{2})$, where $\tau_{i}$'s are irreducible admissible representations of $GL(2, F)$ such that $\omega_{\tau_{1}}\omega_{\tau_{2}} = 1$. We need to figure out precisely that $\theta((\tau_{1}, \tau_{2}))$ is isomorphic with which one of irreducible admissible representations of $GSp(4, F)$ in Table A.1. in \cite{16}? (where $\theta$ denotes the local theta correspondence for the dual reductive pair $(GSO(2, 2)(F), GSp(4, F))$). The answer helps us to find out the level of $\theta((\tau_{1}, \tau_{2}))$ by using Table A.12. in \cite{16}.  \\
 
First of all we need to fix some notation. Let $\tau$ denote an irreducible admissible representation of $GL(2, F)$. Then $1 \rtimes \tau$ defines an admissible representation of $GSp(4, F)$ defined by the  induced representation from Klingen parabolic subgroup $Q$ of $GSp(4, F)$. It is known that $1 \rtimes \tau$ is reducible. It has two constituents such that 

\begin{equation}
\label{17}
 1 \rtimes \tau = \pi_{gen}(\tau) \oplus \pi_{ng}(\tau), 
\end{equation} 
where $\pi_{gen}(\tau)$ denotes the generic irreducible constituent and $\pi_{ng}(\tau)$ is the non-generic irreducible constituent.\\

Also, let $\pi$ be one of the representations $\chi_{1} \times \chi_{2} \rtimes \sigma$, $\pi \rtimes \sigma$, or $\sigma \rtimes \pi$ such that obtained by parabolic induction from $B$, $P$, or $Q$ respectively. Let assume that $\pi$ be a reducible representation. According to the structure of $\pi$, we denote the unique irreducible quotient of $\pi$ (Langlands Quotient) by $J_{B}(\chi_{1}, \chi_{2}, \sigma)$, $J_{P}(\pi, \sigma)$, and $J_{Q}(\sigma, \pi)$ for $\chi_{1} \times \chi_{2} \rtimes \sigma$, $\pi \rtimes \sigma$, and $\sigma \rtimes \pi$ respectively.\\ 

The following theorem will help us to find out the answer of the question about $\theta((\tau_{1}, \tau_{2}))$.

\begin{theorem}[W. T. Gan and S. Takeda] 
\label{th9}
Let $(\tau_{1}, \tau_{2})$ be an irreducible representation of $GSO(2, 2)(F)$ and let $\theta((\tau_{1}, \tau_{2}))$ be the local theta lift to $GSp(4, F)$. Then $\theta((\tau_{1}, \tau_{2})) = \theta((\tau_{2}, \tau_{1}))$ can be determined as follows.
\begin{itemize}
\item[(i)] If $\tau_{1} = \tau_{2} = \tau$ is a discrete series representation, then 
\begin{equation*} 
\theta((\tau, \tau)) = \pi_{gen}(\tau),
\end{equation*}
where  $\pi_{gen}(\tau)$ is the unique generic constituent of $1 \rtimes \tau$.

\item[(ii)] If $\tau_{1} \neq \tau_{2}$ are both supercuspidal, then $\theta((\tau_{1}, \tau_{2}))$ is supercuspidal.

\item[(iii)] If $\tau_{1}$ is supercuspidal and $\tau_{2} = \chi \St_{GL(2)}$, then 
\begin{equation*}
\theta((\tau_{1}, \tau_{2})) = \textbf{St}(\tau_{1}\otimes \chi^{-1}, \chi),
\end{equation*} 
where $\textbf{St}(\tau_{1}\otimes \chi^{-1}, \chi)$ is the unique irreducible constituent of $(\tau_{1}\otimes \chi^{-1})\nu^{1/2} \rtimes \chi \nu^{-1/2}$.

\item[(iv)] Suppose that $\tau_{1} = \chi_{1}\St_{GL(2)} $ and $\tau_{2} = \chi_{2}\St_{GL(2)} $, with $\chi_{1} \neq \chi_{2}$, so that $\chi_{1}^{2} = \chi_{2}^{2}$. Then
\begin{equation*}
\theta((\tau_{1}, \tau_{2})) = \textbf{St}(\chi_{1}/\chi_{2}\St_{GL(2)}, \chi_{2}) = \textbf{St}(\chi_{2}/\chi_{1}\St_{GL(2)}, \chi_{1}),
\end{equation*}
where $\textbf{St}(\chi_{1}/\chi_{2}\St_{GL(2)}, \chi_{2})$ is defined same as the previous case.

\item[(v)] Suppose that $\tau_{1}$ is a discrete representation and $\tau_{2} \hookrightarrow \pi(\chi, \chi^{'})$ with \\ $\mid \chi / \chi^{'} \mid = \mid - \mid^{-s}$ and $s \geq 0$, so that $\tau_{2}$ is non discrete series. Then
\begin{equation*}
\theta((\tau_{1}, \tau_{2})) = J_{P}(\tau_{1}\otimes \chi^{-1}, \chi).
\end{equation*}

\item[(vi)] Suppose that 
\begin{equation*}
\tau_{1} \hookrightarrow \pi(\chi_{1}, \chi_{1}^{'}) \quad \text{and} \quad \tau_{2} \hookrightarrow \pi(\chi_{2}, \chi_{2}^{'}) 
\end{equation*}
with
\begin{equation*}
\mid \chi_{i}/ \chi_{i}^{'} \mid = \mid - \mid^{-s_{i}} \quad \text{and} \quad s_{1} \geq s_{2} \geq 0.
\end{equation*}
Then
\begin{equation*}
\theta((\tau_{1}, \tau_{2})) = J_{B}(\chi_{2}^{'} / \chi_{1}, \chi_{2} / \chi_{1}, \chi_{1}).
\end{equation*}
In particular, the map $(\tau_{1}, \tau_{2}) \mapsto \theta((\tau_{1}, \tau_{2}))$ defines an injection 
\begin{equation*}
\Pi(GSO(2, 2))/ \sim \hookrightarrow \Pi(GSp(4)),
\end{equation*}
where $\sim$ denotes the relation $(\tau_{1}, \tau_{2}) \sim (\tau_{2}, \tau_{1})$.   
\end{itemize}    
\end{theorem}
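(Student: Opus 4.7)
My approach would be to reduce the problem, case by case, to an analysis of the Weil representation $\omega$ for the dual pair $(GSO(2,2)(F), GSp(4, F))$ via Kudla's filtration and to the see-saw diagram relating $(GSO(2,2), GSp(4))$ to $(GL(2), GL(2) \times GL(2))$. The general philosophy is that Howe duality (which is valid in our setting by the work of S.~Takeda cited in Section~\ref{sec5}) reduces the problem to identifying which irreducible constituent of an induced representation of $GSp(4, F)$ realizes the quotient $\theta((\tau_1, \tau_2))$, and then matching that constituent against the Sally--Tadi\'c list recorded in Table A.1.\ of \cite{16}.

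First I would handle case (ii): when $\tau_1 \ncong \tau_2$ are both supercuspidal, the non-vanishing of $\theta((\tau_1, \tau_2))$ together with standard Jacquet-module vanishing for the Weil representation shows that the lift has no nontrivial Jacquet module with respect to either $B$, $P$, or $Q$, hence is supercuspidal. Cases (v) and (vi), where at least one of $\tau_1, \tau_2$ is a principal-series representation, should be handled by the see-saw identity: pairing the inclusion $\tau_2 \hookrightarrow \pi(\chi, \chi')$ on the $GSO(2,2)$-side with an induction from the Siegel or Klingen parabolic of $GSp(4)$ on the other side, one obtains a nontrivial $GSp(4)$-intertwining operator from $\theta((\tau_1, \tau_2))$ into an explicit parabolically induced representation of the form $(\tau_1 \otimes \chi^{-1}) \rtimes \chi$ or $\chi_2'/\chi_1 \times \chi_2/\chi_1 \rtimes \chi_1$. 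A Langlands-parameter computation (tracking $L$- and $\varepsilon$-factors under the theta correspondence) then pins down the Langlands quotient $J_P$ or $J_B$.

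Cases (i), (iii), and (iv) involve Steinberg-type constituents and are the more delicate ones. For (i), I would exploit that $\theta((\tau, \tau))$ is non-zero, has the same central character as $\tau$, and maps into $1 \rtimes \tau$; one then must show it lands in the generic constituent $\pi_{\mathrm{gen}}(\tau)$ and not in $\pi_{\mathrm{ng}}(\tau)$. This can be established either by exhibiting a Whittaker functional on the Weil representation (i.e., showing that the splitting of $\omega$ preserves genericity under global-to-local Whittaker transfer) or by noting that the non-generic constituent is non-tempered while the theta lift of a discrete series is known to be tempered. For (iii) and (iv), where one or both of the $\tau_i$ are twists of the Steinberg, one realizes $\tau_2 = \chi \St_{GL(2)}$ as the unique discrete constituent of $\chi \nu^{1/2} \times \chi \nu^{-1/2}$ and combines the see-saw identity with the reducibility structure of $(\tau_1 \otimes \chi^{-1}) \nu^{1/2} \rtimes \chi \nu^{-1/2}$ to isolate the Steinberg-type constituent $\mathbf{St}(\tau_1 \otimes \chi^{-1}, \chi)$.

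The hardest part, I expect, will be distinguishing the correct irreducible constituent inside a reducible induced representation, particularly the identification $\theta((\tau,\tau)) = \pi_{\mathrm{gen}}(\tau)$ in (i) and the Steinberg identification in (iii) and (iv). These require a genericity argument for the theta lift together with precise knowledge of the socle/cosocle filtration of $1 \rtimes \tau$ and of $(\chi_1/\chi_2)\nu^{1/2} \rtimes \chi_2\nu^{-1/2}$. The injectivity statement at the end of (vi) should follow formally from the explicit nature of the parameters $(\chi_2'/\chi_1, \chi_2/\chi_1, \chi_1)$, which recover the unordered pair $\{\tau_1, \tau_2\}$ uniquely up to the symmetry $(\tau_1, \tau_2) \sim (\tau_2, \tau_1)$ built into $GSO(2,2) = (GL(2) \times GL(2))/\mathbb{G}_m \rtimes \langle t \rangle$ via the outer involution.
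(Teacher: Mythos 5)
This theorem is not proved in the paper at all: it is quoted from Gan--Takeda \cite{17} and used as a black box, so there is no in-text argument to compare your proposal against. Judged on its own terms, your outline does follow the strategy of the actual Gan--Takeda proof (Jacquet modules of the Weil representation via Kudla's filtration, see-saw dualities, and genericity to isolate constituents), but two points deserve correction. First, in case (i) your fallback argument --- that the non-generic constituent of $1 \rtimes \tau$ is non-tempered and hence cannot be the lift of a discrete series --- fails: for $\tau$ a discrete series with the relevant central character, $1 \rtimes \tau$ is unitarily induced from tempered data and \emph{both} of its irreducible constituents $\pi_{gen}(\tau)$ and $\pi_{ng}(\tau)$ are tempered (these are the types VIIIa/VIIIb, resp.\ Va/Va$^*$-like, pairs in the Sally--Tadi\'c classification used in \cite{16}). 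Temperedness therefore cannot distinguish them; the genericity argument (existence of a Whittaker functional on the theta lift of a generic representation of $GSO(2,2)$, as in the Howe--Piatetski-Shapiro/Takeda results invoked in Section~\ref{sec5}) is not an optional alternative but the essential tool, and the same remark applies to isolating $\textbf{St}(\tau_1\otimes\chi^{-1},\chi)$ in cases (iii) and (iv).

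Second, in cases (v) and (vi) the phrase ``a Langlands-parameter computation then pins down the Langlands quotient'' conceals the actual work. What one must show is that $\theta((\tau_1,\tau_2))$ is a \emph{quotient} (not merely a constituent) of the standard module $(\tau_1\otimes\chi^{-1})\rtimes\chi$, resp.\ $\chi_2'/\chi_1\times\chi_2/\chi_1\rtimes\chi_1$, with the inducing data arranged in Langlands position; only then does uniqueness of the Langlands quotient force $\theta((\tau_1,\tau_2))=J_P(\cdots)$ or $J_B(\cdots)$. That quotient property comes from an explicit computation of the normalized Jacquet modules of the induced Weil representation along the Siegel and Klingen parabolics (Kudla's filtration), and it is precisely this computation that also settles case (ii), since one shows all such Jacquet modules of the lift vanish when $\tau_1\ncong\tau_2$ are supercuspidal. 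As written, your proposal asserts the conclusion of that computation rather than performing it, so the cases are not yet established.
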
  

\begin{remark}
\label{r14}
W. Gan and S. Takeda consider the identification \linebreak $(GL(2) \times GL(2))/ \lbrace(z, z^{-1})\rbrace \cong GSO(2, 2)$. However, in (\ref{7}), we consider the identification  $(GL(2) \times GL(2))/ \lbrace(z, z)\rbrace \cong GSO(2, 2)$. Therefore, in our setting the local theta correspondence is injective under the relation $(\tau_{1}, \tau_{2}) \sim (\tau_{2}^{\vee}, \tau_{1}^{\vee})$,  where $\tau^{\vee}(g) = \tau((g^{-1})^{t})$.\\
\end{remark}

This result will solve the second problem stated at the beginning of this section. However, we have to precisely determine $\theta((\tau_{1}, \tau_{2}))$ according to Table A.1. in \cite{16}. In the next part we are going to complete this calculation and determine the level of $\theta((\tau_{1}, \tau_{2}))$ in the sense of paramodular representation by using Theorem \ref{th9}.

\subsection{The level of $\theta((\tau_{1}, \tau_{2}))$} 
\label{subsec3}

In this part we present the second main result in this paper. Consider the dual reductive pair $(GSO(2, 2)(F), GSp(4, F))$ over $F$. We know that $GSO(2, 2)(F)$ is isomorphic to $(GL(2, F) \times GL(2, F))/ F^{\times}$. Therefore, any irreducible admissible representation of $GSO(2, 2)(F)$ is a pair $(\tau_{1}, \tau_{2})$ where $\tau_{i}$'s are irreducible admissible representations of $GL(2, F)$ such that $\omega_{\tau_{1}} \omega_{\tau_{2}} = 1$. We have the image of local theta lift to $GSp(4, F)$ of $(\tau_{1}, \tau_{2})$ by Theorem \ref{th9}. This means that if $\theta((\tau_{1}, \tau_{2}))$ is paramodular, we can find its level by theory of local newforms for $GSp(4, F)$. Explicitly we want to answer this question ``what is the level of $\theta((\tau_{1}, \tau_{2}))$ as a paramodular representation of $GSp(4, F)$ if the level of $\tau_{1}$ and $\tau_{2}$ are $a(\tau_{1})$ and $a(\tau_{2})$ respectively?".\\
 
\noindent Fix $(\tau_{1}, \tau_{2})$ as an irreducible admissible representation of $GSO(2, 2)(F)$ such that $\omega_{\tau_{i}}$'s are trivial and $F = \mathbb{Q}_{\upsilon}$ is a non-archimedean local field. Let assume $\theta((\tau_{1}, \tau_{2})) \neq 0$.  
We are going to find the level of $\theta((\tau_{1}, \tau_{2}))$ by following cases according all possible choices for $\tau_{i}$'s.

\begin{itemize}
\item[(I)] Let $\tau_{1} = \tau_{2} = \tau$ be an irreducible admissible representation of $GL(2, F)$ of level $a(\tau)$. 

\begin{itemize}
\item[(Ia)] Let assume $\tau$ be a discrete series representation. Then $\theta((\tau, \tau)) = \pi_{gen}(\tau)$. This is the unique generic constituent of $1 \rtimes \tau$, therefore, this is the case VIIIa in Table A.12. in \cite{16}. Therefore if the level of $\tau$ is $a(\tau)$, the level of $\theta((\tau, \tau))$ is $2a(\tau)$.

\item[(I$\text{b}_{1}$)] Let $\tau \hookrightarrow \pi(\chi, \chi^{'})$ with $\mid \chi / \chi^{'} \mid = \mid - \mid^{-s}$ and $s \geq 0$. Therefore $\tau$ is a non-discrete series representation. By Theorem \ref{th9}, $\theta((\tau, \tau))= J_{B}(\chi^{'} / \chi, 1 , \chi)$. Now  $\chi^{'} / \chi \times 1 \rtimes \chi$ is irreducible if $\chi^{'} / \chi \neq \nu^{ \pm 1}$ (see \cite{16}, p. 37). This means that $J_{B}(\chi^{'} / \chi, 1 , \chi) = \chi^{'} / \chi \times 1 \rtimes \chi $. In this case the level of $\theta((\tau, \tau))$ is $a(\chi^{'}) + 3a(\chi)$, according to the Table A.12. Type I.

\item[(I$\text{b}_{2}$)]  Let $\tau \hookrightarrow \pi(\chi, \chi^{'})$ with $\mid \chi / \chi^{'} \mid = \mid - \mid^{-s}$ and $s \geq 0$, therefore, $\tau$ is non-discrete series. Therefore $\theta((\tau, \tau))= J_{B}(\chi^{'} / \chi, 1 , \chi)$ according to Theorem \ref{th9}. Now  $\chi^{'} / \chi \times 1 \rtimes \chi$ is reducible if $\chi^{'} / \chi = \nu^{ \pm 1}$ (see \cite{16}, p. 37). In this case 
\begin{equation*}
J_{B}(\chi^{'} / \chi, 1 , \chi) = J_{P}(\chi^{-1}\chi \nu^{1/2}, \chi).
\end{equation*}
This is just a simple consequence of the equation at page 21 in \cite{17}. This means that $\theta((\tau, \tau)) = J_{P}(\nu^{1/2}\triv_{GL(2)} , \chi) $. Now let $\sigma$ be the character such that $\chi = \nu^{-1/2}\sigma$. According to 2.11. in \cite{16} (see p. 39) we have 
\begin{equation*}
\theta((\tau, \tau)) = J_{P}(\nu^{1/2}\triv_{GL(2)} , \chi) = L(\nu,1_{F^\times}\rtimes\nu^{-1/2}\sigma).
\end{equation*}
This is Type VId. Therefore, the level of $\theta((\tau, \tau))$ is 0(spherical) if $\sigma$ is unramified. If $\sigma$ is ramified then $\theta((\tau, \tau))$ is not paramodular.  
\end{itemize}

\item[(II)]  If $\tau_{1} \neq \tau_{2}$ and $\tau_{i}$'s are supercuspidal, then $\theta((\tau_{1}, \tau_{2}))$ is supercuspidal. In this case the level of $\theta((\tau_{1}, \tau_{2}))$ is greater than or equal 2. The answer is not precise in this case because there is no classification for supercuspidal representations of $GSp(4, F)$.

\item[(III)] If $\tau_{1}$ is supercuspidal and $\tau_{2} = \chi\St_{GL(2)}$, then 
\begin{equation*}
\theta((\tau_{1}, \tau_{2})) = \textbf{St}(\tau_{1}\otimes \chi^{-1}, \chi),
\end{equation*} 
where $\textbf{St}(\tau_{1}\otimes \chi^{-1}, \chi)$ is the unique generic irreducible constituent of \\ $(\tau_{1}\otimes \chi^{-1})\nu^{1/2} \rtimes \chi \nu^{-1/2}$ (see \cite{16}, p. 40). Note that we assumed $\omega_{\tau_{1}} = 1$. This means that $\theta((\tau_{1}, \tau_{2}))$ is the case XIa in the Table A.12. Therefore,
\begin{equation*}
\theta((\tau_{1}, \tau_{2})) = \delta(\nu^{1/2}(\tau_{1} \otimes \chi^{-1}),\nu^{-1/2}\chi). 
\end{equation*}
If $\tau_{1}$ has level $a(\tau_{1})$, then level of $\theta((\tau_{1}, \tau_{2}))$ is\\
\begin{equation*}
\begin{cases} a(\chi (\tau_{1} \otimes \chi^{-1})) + 1 &\mbox{if } \chi \quad \text{is unramified} \\
a(\chi (\tau_{1} \otimes \chi^{-1})) + 2a(\chi) & \mbox{if } \chi \quad \text{is ramified}. \end{cases}   
\end{equation*}

\item[(IV)] Suppose that $\tau_{1} = \chi_{1}\St_{GL(2)} $ and $\tau_{2} = \chi_{2}\St_{GL(2)} $, with $\chi_{1} \neq \chi_{2}$, so that $\chi_{1}^{2} = \chi_{2}^{2}$. Then
\begin{equation*}
\theta((\tau_{1}, \tau_{2})) = \textbf{St}(\chi_{1}/\chi_{2}\St_{GL(2)}, \chi_{2}) = \textbf{St}(\chi_{2}/\chi_{1}\St_{GL(2)}, \chi_{1}).
\end{equation*}

Let $\xi = \chi_{2} / \chi_{1}$ and $\sigma = \chi_{1}$. Therefore we have 
\begin{equation*}
\theta((\tau_{1}, \tau_{2})) = \delta([\xi,\nu\xi],\nu^{-1/2}\sigma),
\end{equation*}
which is Type Va in the Table A.12. This means that the level of $\theta((\tau_{1}, \tau_{2}))$ is \\
 
\begin{equation*}
\begin{cases} 2 &\mbox{if } \sigma \quad \text{and} \quad \xi \quad \text{are unramified} \\
2a(\xi) + 1 & \mbox{if }\sigma \quad \text{is unramified and} \quad \xi \quad \text{is ramified}\\
2a(\sigma) + 1 & \mbox{if }\sigma \quad \text{is ramified and} \quad \sigma\xi \quad \text{is unramified}\\
2a(\xi \sigma) + 2a(\sigma) & \mbox{if }\sigma \quad \text{is ramified and} \quad
 \sigma\xi \quad \text{is ramified}.\\ 
\end{cases}
\end{equation*}

\item[(V)] Suppose that $\tau_{1}$ is a discrete representation and $\tau_{2} \hookrightarrow \pi(\chi, \chi^{'})$ with \\ $\mid \chi / \chi^{'} \mid = \mid - \mid^{-s}$ and $s \geq 0$, so that $\tau_{2}$ is non-discrete series. Then
\begin{equation*}
\theta((\tau_{1}, \tau_{2})) = J_{P}(\tau_{1}\otimes \chi^{-1}, \chi),
\end{equation*}
where $J_{P}(\tau_{1}\otimes \chi^{-1}, \chi)$ is the unique irreducible quotient constituent of $\tau_{1}\otimes \chi^{-1} \rtimes \chi$.
We consider different cases for $\tau_{1}$. \\
\begin{itemize}
\item[($\text{V}_{1}$)] Let $\tau_{1}$ be supercuspidal of level $a(\tau)$ and $\tau$ is not of the form $\nu^{\pm 1/2}\tau^{'}$, where $\tau^{'}$ is a supercuspidal representation with trivial central character. Therefore, $\tau_{1}\otimes \chi^{-1} \rtimes \chi$ is irreducible (see \cite{16}, p. 40) and 
\begin{equation*}
J_{P}(\tau_{1}\otimes \chi^{-1}, \chi) = \tau_{1}\otimes \chi^{-1} \rtimes \chi.  
\end{equation*}
This is Type X. Therefore, the level of $\theta((\tau_{1}, \tau_{2}))$ is $a(\chi (\tau_{1} \otimes \chi^{-1})) + 2a(\chi)$.
\item[($\text{V}_{2}$)] Let $\tau_{1}$ be supercuspidal of level $a(\tau)$ and $\tau$ is of the form $\nu^{1/2}\tau^{'}$, where $\tau^{'}$ is a supercuspidal representation with trivial central character. Let $\sigma$ be a character such that $\chi = \nu^{-1/2}\sigma$. Therefore 
\begin{equation*}
J_{P}(\tau_{1}\otimes \chi^{-1}, \chi) = L(\nu^{1/2}\tau^{'},\nu^{-1/2}\sigma).
\end{equation*}
This is Type XIb. This means that the level of $\theta((\tau_{1}, \tau_{2}))$ is $a(\sigma\tau^{'})$ if $\sigma$ is unramified. If $\sigma$ is ramified then $\theta((\tau_{1}, \tau_{2}))$ is not paramodular according to Table A.12. in \cite{16}.

\item[($\text{V}_{3}$)] Let $\tau_{1}$ be $\chi^{'}\St_{GL(2)}$. Then $\theta((\tau_{1}, \tau_{2}))$ is $J_{P}(\chi^{'}\St_{GL(2)}\otimes \chi^{-1}, \chi)$. We assume $\chi^{'}\chi^{-1} \neq \nu^{\pm1/2}$, then $\chi^{'}\St_{GL(2)}\otimes \chi^{-1} \rtimes \chi$ is irreducible. Therefore $\theta((\tau_{1}, \tau_{2})) = \chi^{'}\St_{GL(2)}\otimes \chi^{-1} \rtimes \chi $. This is Type X. Therefore, the level of $\theta((\tau_{1}, \tau_{2}))$ is $a(\chi (\tau_{1} \otimes \chi^{-1})) + 2a(\chi)$.

\item[($\text{V}_{4}$)] Let $\tau_{1}$ be $\chi^{'}\St_{GL(2)}$. Then $\theta((\tau_{1}, \tau_{2}))$ is $J_{P}(\chi^{'}\St_{GL(2)}\otimes \chi^{-1}, \chi)$. We assume $\chi^{'}\chi^{-1} = \nu^{\pm1/2}$. Therefore $\nu^{1/2}\St_{GL(2)}\rtimes \chi$ is reducible. Let $\sigma$ be a character such that $\chi = \nu^{-1/2}\sigma $, then 
\begin{equation*}
\theta((\tau_{1}, \tau_{2})) = L(\nu^{1/2}\,\St_{GL(2)},\nu^{-1/2}\sigma), 
\end{equation*}               
according to 2.11 in \cite{16} (p. 39). This is Type VIc. This means that the level of $\theta((\tau_{1}, \tau_{2}))$ is $1$ if $\sigma$ is unramified. If $\sigma$ is ramified then $\theta((\tau_{1}, \tau_{2}))$ is not paramodular. 

\item[($\text{V}_{5}$)] Let $\tau_{1}$ be $\pi(\chi_{1}, \chi_{2})$ such that $\tau_{1}$ is a discrete series. $J_{P}(\tau_{1}\otimes \chi^{-1}, \chi)$ is the unique quotient constituent of $\chi^{-1} \pi(\chi_{1}, \chi_{2}) \rtimes \chi$. The central character of $\chi^{-1} \pi(\chi_{1}, \chi_{2})$ is $\omega = \chi^{-2}\chi_{1}\chi_{2}$. Let assume $ \tau = \chi^{-1} \pi(\chi_{1}, \chi_{2})$ such that it is not of the form $\nu^{\pm 1/2}\tau^{'}$, where $\tau^{'}$ is a representation with trivial central character. Then $\chi^{-1} \pi(\chi_{1}, \chi_{2}) \rtimes \chi$ is irreducible and $\theta((\tau_{1}, \tau_{2})) = \chi^{-1} \pi(\chi_{1}, \chi_{2}) \rtimes \chi$. Therefore, the level of $\theta((\tau_{1}, \tau_{2})$ is $a(\tau) + 2a(\chi)$.

\item[($\text{V}_{6}$)] Let $\tau_{1}$ be $\pi(\chi_{1}, \chi_{2})$ such that $\tau_{1}$ is a discrete series. $J_{P}(\tau_{1}\otimes \chi^{-1}, \chi)$ is the unique quotient constituent of $\chi^{-1} \pi(\chi_{1}, \chi_{2}) \rtimes \chi$. The central character of $\chi^{-1} \pi(\chi_{1}, \chi_{2})$ is $\omega = \chi^{-2}\chi_{1}\chi_{2}$. Let assume $ \tau = \chi^{-1} \pi(\chi_{1}, \chi_{2})$ such that it is of the form $\nu^{\pm 1/2}\tau^{'}$, where $\tau^{'}$ is a representation with trivial central character. Then $\nu^{1/2}\tau^{'} \rtimes \chi$ is reducible and
\begin{equation*}
\theta((\tau_{1}, \tau_{2})) = L(\nu^{1/2}\tau^{'},\nu^{-1/2}\sigma).
\end{equation*}
  This means that the level of $\theta((\tau_{1}, \tau_{2}))$ is $a(\sigma\tau^{'})$ if $\sigma$ is unramified. If $\sigma$ is ramified then $\theta((\tau_{1}, \tau_{2}))$ is not paramodular according to Table A.12. in \cite{16}    
\end{itemize} 

\item[(VI)]Suppose that 
\begin{equation*}
\tau_{1} \hookrightarrow \pi(\chi_{1}, \chi_{1}^{'}) \quad \text{and} \quad \tau_{2} \hookrightarrow \pi(\chi_{2}, \chi_{2}^{'}) 
\end{equation*}
with
\begin{equation*}
\mid \chi_{i}/ \chi_{i}^{'} \mid = \mid - \mid^{-s_{i}} \quad \text{and} \quad s_{1} \geq s_{2} \geq 0.
\end{equation*}
Then
\begin{equation*}
\theta((\tau_{1}, \tau_{2})) = J_{B}(\chi_{2}^{'} / \chi_{1}, \chi_{2} / \chi_{1}, \chi_{1}).
\end{equation*}

We need to consider several cases according to reducibility of $\chi_{2}^{'} / \chi_{1} \times \chi_{2} / \chi_{1} \rtimes \chi_{1}$.

\begin{itemize}
\item[($\text{VI}_{1}$)] If $\chi_{2}^{'} / \chi_{1} \neq \nu^{\pm 1}$, $\chi_{2} / \chi_{1} \neq \nu^{\pm 1}$ and $\chi_{2}^{'} / \chi_{1} \neq \nu^{\pm 1} \chi_{2} / \chi_{1} $, then $\chi_{2}^{'} / \chi_{1} \times \chi_{2} / \chi_{1} \rtimes \chi_{1}$ is irreducible, otherwise, it is reducible. If $\chi_{2}^{'} / \chi_{1} \times \chi_{2} / \chi_{1} \rtimes \chi_{1}$ is irreducible, then 
\begin{equation*}
\theta((\tau_{1}, \tau_{2})) = \chi_{2}^{'} / \chi_{1} \times \chi_{2} / \chi_{1} \rtimes \chi_{1}.
\end{equation*}
This is Type I in the Table A.12. in \cite{16}. Therefore the level of $\theta((\tau_{1}, \tau_{2}))$ is $a(\chi_{2}^{'}) + a(\chi_{2}) + 2a(\chi_{1})$.\\

We are considering cases that $\chi_{2}^{'} / \chi_{1} \times \chi_{2} / \chi_{1} \rtimes \chi_{1}$ is reducible.       

\item[($\text{VI}_{2}$)] Let $\chi$ be character of $F^{\times}$ such that $\chi \neq \nu^{\pm 3/2}$ and $\chi^{2} \neq \nu^{\pm 1}$. Let assume $\chi_{2}^{'} / \chi_{1} = \nu^{1/2}\chi$ and $\chi_{2} / \chi_{1} = \nu^{-1/2}\chi$. Therefore $\chi_{2}^{'} / \chi_{1} \times \chi_{2} / \chi_{1} \rtimes \chi_{1}$ looks like $\nu^{1/2}\chi \times  \nu^{-1/2}\chi \rtimes \chi_{1}$. Using Lemma 3.3. in \cite{20}, we have 
\begin{equation*}
\theta((\tau_{1}, \tau_{2})) = \chi \triv_{GL(2)} \rtimes \chi_{1}.
\end{equation*} 
This is Type IIb in the Table A.12. in \cite{16}. Therefore the level of $\theta((\tau_{1}, \tau_{2}))$ is 
$2a(\chi_{1})$ if $\chi_{1}\chi$ is unramified. If $\chi_{1}\chi$ is ramified then  $\theta((\tau_{1}, \tau_{2}))$ is not paramodular. 

\item[($\text{VI}_{3}$)] Let $\chi$ be character of $F^{\times}$ such that $\chi \neq 1_{F^{\times}}$ and $\chi^{2} \neq \nu^{\pm 2}$. Let assume $\chi_{2}^{'} / \chi_{1} = \chi$ and $\chi_{2} / \chi_{1} = \nu$. Moreover $\sigma$ is a character such that $\chi_{1} = \nu^{-1/2}\sigma$. Therefore $\chi_{2}^{'} / \chi_{1} \times \chi_{2} / \chi_{1} \rtimes \chi_{1}$ looks like $\chi \times  \nu \rtimes \nu^{-1/2}\sigma$. Using Lemma 3.4. in \cite{20}, we have 
\begin{equation*}
\theta((\tau_{1}, \tau_{2})) = \chi \rtimes \sigma \triv_{GSp(2)}.
\end{equation*} 
This is Type IIIb in the Table A.12. in \cite{16}. Therefore the level of $\theta((\tau_{1}, \tau_{2}))$ is 0(spherical) if $\sigma$ is unramified. If $\sigma$ is ramified then  $\theta((\tau_{1}, \tau_{2}))$ is not paramodular.

\item[($\text{VI}_{4}$)] Let assume $\chi_{2}^{'} / \chi_{1} = \nu^{2}$ and $\chi_{2} / \chi_{1} = \nu$. Moreover $\sigma$ is a character such that $\chi_{1} = \nu^{-3/2}\sigma$. Therefore $\chi_{2}^{'} / \chi_{1} \times \chi_{2} / \chi_{1} \rtimes \chi_{1}$ looks like $\nu^{2} \times  \nu \rtimes \nu^{-3/2}\sigma$. Moreover, it is clear that $\mid \chi_{2} / \chi_{2}^{'} \mid = \nu^{-1}$. We have 
\begin{equation*}
J_{B}(\nu^{2}, \nu , \nu^{-3/2}\sigma) = J_{P}(\nu^{3/2}\triv_{GL(2)}, \nu^{-3/2}\sigma
).
\end{equation*}
This is a simple consequence of the equation at page 21 in \cite{17}. According to 2.9. in \cite{16} (see p. 38)
\begin{equation*}
\theta((\tau_{1}, \tau_{2})) = \sigma\triv_{GSp(4)}.
\end{equation*} 
This is Type IVd in the Table A.12. in \cite{16}. Therefore the level of $\theta((\tau_{1}, \tau_{2}))$ is 0 if $\sigma$ is unramified. If $\sigma$ is ramified then $\theta((\tau_{1}, \tau_{2}))$ is not paramodular.

\item[($\text{VI}_{5}$)]  Let assume $\chi_{2}^{'} / \chi_{1} = \nu \xi$ and $\chi_{2} / \chi_{1} = \xi$, where $\xi$ is non-trivial quadratic character of $F^{\times}$.  Moreover $\sigma$ is a character such that $\chi_{1} = \nu^{-1/2}\sigma$. Therefore, $\chi_{2}^{'} / \chi_{1} \times \chi_{2} / \chi_{1} \rtimes \chi_{1}$ looks like $ \nu \xi \times  \xi \rtimes \nu^{-1/2}\sigma$. It is clear that $\mid \chi_{2} / \chi_{2}^{'} \mid = \nu^{-1}$. We have 
\begin{equation*}
J_{B}(\nu^{2}, \nu , \nu^{-3/2}\sigma) = J_{P}(\nu^{1/2} \xi \triv_{GL(2)}, \nu^{-1/2}\sigma
).
\end{equation*}
This is a simple consequence of the equation at page 21 in \cite{17}. According to 2.10. in \cite{16}
\begin{equation*}
\theta((\tau_{1}, \tau_{2})) = L(\nu\xi,\xi\rtimes\nu^{-1/2}\sigma).
\end{equation*} 
This is Type Vd in the Table A.12. in \cite{16}. Therefore the level of $\theta((\tau_{1}, \tau_{2}))$ is 0(spherical) if $\sigma$ and $\xi$ are unramified. If $\sigma$ or $\xi$ are ramified then the $\theta((\tau_{1}, \tau_{2}))$ is not paramodular.

\item[($\text{VI}_{6}$)]  Let assume $\chi_{2}^{'} / \chi_{1} = \nu $ and $\chi_{2} / \chi_{1} = 1_{F^{\times}}$. Moreover $\sigma$ is a character such that $\chi_{1} = \nu^{-1/2}\sigma$. Therefore $\chi_{2}^{'} / \chi_{1} \times \chi_{2} / \chi_{1} \rtimes \chi_{1}$ looks like $ \nu \times 1_{F^{\times}} \rtimes \nu^{-1/2}\sigma$. It is clear that $\mid \chi_{2} / \chi_{2}^{'} \mid = \nu^{-1}$. We have 
\begin{equation*}
J_{B}(\nu^{2}, \nu , \nu^{-3/2}\sigma) = J_{P}(\nu^{1/2} \triv_{GL(2)}, \nu^{-1/2}\sigma
).
\end{equation*}
This is a simple consequence of the equation at page 21 in \cite{17}. According to 2.11. in \cite{16}
\begin{equation*}
\theta((\tau_{1}, \tau_{2})) =  L(\nu,1_{F^\times}\rtimes\nu^{-1/2}\sigma).
\end{equation*} 
This is Type VId in the Table A.12. in \cite{16}. Therefore the level of $\theta((\tau_{1}, \tau_{2}))$ is 0(spherical) if $\sigma$ is unramified. If $\sigma$ is ramified then the $\theta((\tau_{1}, \tau_{2}))$ is not paramodular.

\end{itemize} 
  
\end{itemize}

We are in the position to state the second main result in this paper. According to above calculation, we have
\begin{theorem}
\label{th10}  
 Let $F = \mathbb{Q}_{\upsilon}$ be a non-archimedean local field and we denote the normalized absolute value by $\nu$ such that $\nu(\varpi) = \mid \varpi \mid = q^{-1}$. Let consider the dual reductive pair \linebreak $(GSO(2, 2)(F), GSp(4, F))$ over $F$. Let $(\tau_{1}, \tau_{2})$ be an irreducible non-supercuspidal admissible representation of $GSO(2, 2)(F)$ such that $\tau_{i}$ has level $a(\tau_{i})$ as an irreducible admissible representation of $GL(2, F)$ such that they have trivial central character. If $\theta((\tau_{1}, \tau_{2}))$ denotes the local theta lift to $GSp(4, F)$, then we can calculate the level of $\theta((\tau_{1}, \tau_{2}))$ as an admissible representation of $GSp(4, F)$, according to $a(\tau_{i})$'s.      
\end{theorem}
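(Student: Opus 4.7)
The plan is to proceed by an exhaustive case analysis indexed by the structural types of the pair $(\tau_1,\tau_2)$. Since $(\tau_1,\tau_2)$ is non-supercuspidal, each $\tau_i$ is either a (twisted) Steinberg, a principal-series representation, or a full induced representation. For each such combination, Theorem \ref{th9} of Gan--Takeda pins down $\theta((\tau_1,\tau_2))$ as the Langlands quotient (or the generic constituent) of a specific parabolically induced representation of $GSp(4,F)$. Once this parabolic data is in hand, the Sally--Tadi\'c classification (Table A.1 of \cite{16}) identifies the isomorphism class of $\theta((\tau_1,\tau_2))$ among the types I--XI, and then the Roberts--Schmidt paramodular table (Table A.12 of \cite{16}) directly yields the conductor $a(\theta((\tau_1,\tau_2)))$ in terms of the conductors of the inducing characters/representations.

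Concretely, I would organize the argument into the six cases matching the six clauses of Theorem \ref{th9}: (I) $\tau_1=\tau_2=\tau$; (II) $\tau_1\neq\tau_2$ both supercuspidal (excluded by hypothesis, or noted as giving a supercuspidal lift); (III) one supercuspidal plus one twisted Steinberg; (IV) both twisted Steinberg with $\chi_1\ne\chi_2$; (V) one discrete series plus one non-discrete principal series; (VI) both $\tau_i$ non-discrete principal series. Within cases (I), (V), and (VI) further subcases are forced by the reducibility points of the induced representation $\chi_2'/\chi_1 \times \chi_2/\chi_1 \rtimes \chi_1$ (respectively $\tau_1\otimes\chi^{-1}\rtimes\chi$), which occur exactly when the character ratios hit values $\nu^{\pm 1/2}$ or $\nu^{\pm 1}$. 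In each subcase I will invoke the standard reducibility results (Lemmas 3.3 and 3.4 of \cite{20} and the equation at p.~21 of \cite{17}) to rewrite the Langlands quotient in a canonical form, match it to the corresponding type in Table A.1, and read off the paramodular level from Table A.12.

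The bookkeeping is the only serious obstacle: because the conductor formulas in Table A.12 depend crucially on whether the twisting character $\sigma$ (and in some cases an auxiliary quadratic character $\xi$) is ramified or unramified, several of the resulting lifts lie outside the paramodular locus entirely. To handle this cleanly, I will, in each subcase, extract from the central-character hypothesis $\omega_{\tau_1}\omega_{\tau_2}=1$ the precise form of $\sigma$ in terms of the inducing data of $\tau_1,\tau_2$, and then record the level as a piecewise function of $a(\chi_i)$, $a(\chi_i')$, $a(\sigma)$, and $a(\xi)$. Once all cases are tabulated (cases (I$\mathrm{a}$), (I$\mathrm{b}_1$), (I$\mathrm{b}_2$), (III), (IV), (V$_1$)--(V$_6$), and (VI$_1$)--(VI$_6$) in the notation developed above), the theorem follows simply by observing that every non-supercuspidal $(\tau_1,\tau_2)$ falls into one of these cases and that in each case the level of $\theta((\tau_1,\tau_2))$ has been expressed as an explicit function of $a(\tau_1)$, $a(\tau_2)$, and the ramification data of the inducing characters. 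The supercuspidal case (II) is the only one where the analysis breaks down, since no explicit classification of supercuspidal representations of $GSp(4,F)$ is available; there we can only record that the lift is itself supercuspidal of level at least $2$.
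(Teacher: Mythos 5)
Your proposal is correct and follows essentially the same route as the paper: an exhaustive case analysis driven by Theorem \ref{th9} of Gan--Takeda to identify $\theta((\tau_1,\tau_2))$, followed by matching against the Sally--Tadi\'c classification and reading the paramodular level from Table A.12 of Roberts--Schmidt, with the supercuspidal-pair case excluded exactly as in the paper. The subcases you list (I$\mathrm{a}$), (I$\mathrm{b}_1$), (I$\mathrm{b}_2$), (III), (IV), (V$_1$)--(V$_6$), (VI$_1$)--(VI$_6$) are precisely those worked out in Section \ref{subsec3} preceding the theorem.
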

\begin{proof}
 If $(\tau_{1}, \tau_{2})$ is an irreducible representation of $GSO(2, 2)(F)$, we are able to find the level of $\theta((\tau_{1}, \tau_{2}))$, according to one of the above cases. Note that we need to assume the non-supercuspidal representations of $GSO(2, 2)(F)$ because the case II does not provide explicit information.
 \renewcommand{\qedsymbol}{} 
\end{proof}

 \begin{remark}
 \label{r15}
 In the above list in some cases the local theta lift were not paramodular. Note that because we are interested in studying global representation $\pi = \pi(\tau_{1}, \tau_{2})$ of $GSO(2, 2)(\mathbb{A})$ which is generic, then the finite constituent $\Pi_{\sigma^{+}, \upsilon}$   is generic for all finite places $\upsilon$. Therefore, we never face those cases that the local theta lift to $GSp(4, F)$ are not paramodular.
 \end{remark}
  
Let $\pi = \pi(\tau_{1}, \tau_{2})$ be a cuspidal representation of $(GL(2, \mathbb{A}) \times GL(2, \mathbb{A}))/ \mathbb{A}^{\times}$ of level $(N_{1}, N_{2})$. If we assume that $\pi = \otimes_{\upsilon} \pi_{\upsilon}$, then we know that $\Pi_{\sigma^{+}} = \otimes_{\upsilon} \theta(\pi_{\upsilon}^{+})^{*}$. This means that $\Pi_{\sigma^{+}}$ is determined completely by the local theta lifting of $\pi_{\upsilon}^{+}$'s. Therefore, because we are able to figure out the level of all finite constituent $\Pi_{\sigma^{+}, \upsilon}$ for each non-archimedean local place, then we are able to calculate the level of $\Pi_{\sigma^{+}}$ in terms of the level of finite part $\Pi_{\sigma^{+}, fin}$ of $\Pi_{\sigma^{+}}$. This means that we can have the following corollary.

\begin{corollary}
\label{c3}
Let $\pi = \pi(\tau_{1}, \tau_{2})$ be a cuspidal automorphic representation of $(GL(2, \mathbb{A}) \times GL(2, \mathbb{A}))/ \mathbb{A}^{\times}$ of level $(N_{1}, N_{2})$. Then we are able to find the level of $\Pi_{\sigma^{+}, fin}$ as the finite part of the paramodular representation $\Pi_{\sigma^{+}}$ of $GSp(4, \mathbb{A})$. 
\end{corollary}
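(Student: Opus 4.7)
The plan is to reduce the global statement to the local situation already handled in Theorem \ref{th10}, and then assemble the local levels into a single global integer. I would begin by writing $\pi = \pi(\tau_1,\tau_2) = \bigotimes_\upsilon \pi_\upsilon$ as a restricted tensor product and recalling that, by Corollary \ref{c2}, the unique extension of $\pi$ that admits a non-zero global theta lift is $\sigma^{+} = (\pi, \delta)$ with $\delta(\upsilon) = +$ at every place. Takeda's factorization $\Pi_{\sigma^+} \cong \bigotimes_\upsilon \theta(\sigma^+_\upsilon)^*$ (see Section \ref{sec5}) then reduces the determination of the global level to a product of local problems, once I restrict attention to the finite part $\Pi_{\sigma^+, fin} = \bigotimes_{\upsilon < \infty}\Pi_{\sigma^+, \upsilon}$.

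Next I would separate the finite places into the two natural classes. Let $S$ be the finite set of non-archimedean $\upsilon$ such that $\tau_{1,\upsilon}$ or $\tau_{2,\upsilon}$ is ramified; equivalently, $\upsilon$ divides $N_1 N_2$. For $\upsilon \notin S$ both local constituents are unramified with $a(\tau_{i,\upsilon}) = 0$, and one checks (this falls under case ($\text{VI}_1$) with all characters unramified, or more directly from the fact that theta lifts of spherical representations are spherical in this similitude pair) that $\Pi_{\sigma^+, \upsilon}$ is unramified, hence $a(\Pi_{\sigma^+, \upsilon}) = 0$. For $\upsilon \in S$, genericity of $\pi$ (which holds by assumption since $\tau_1,\tau_2$ are cuspidal and hence infinite-dimensional, cf.\ Remark \ref{r8}) guarantees that $\Pi_{\sigma^+, \upsilon}$ is generic and therefore paramodular by Theorem \ref{th8}, so its level is well-defined. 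Theorem \ref{th10} then provides an explicit expression for $a(\Pi_{\sigma^+, \upsilon})$ in terms of $a(\tau_{1,\upsilon})$ and $a(\tau_{2,\upsilon})$ according to the case into which $(\tau_{1,\upsilon},\tau_{2,\upsilon})$ falls (Type I--VI of the subsection \ref{subsec3} list).

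Putting everything together, define
\begin{equation*}
N := \prod_{\upsilon \in S} p_\upsilon^{\, a(\Pi_{\sigma^+, \upsilon})},
\end{equation*}
where $p_\upsilon$ is the rational prime corresponding to $\upsilon$. Since almost all local factors contribute trivially, this product is finite, and by construction $\Pi_{\sigma^+, fin}$ has a non-zero vector fixed by the open compact subgroup $K_0 = \prod_{\upsilon < \infty} K(\mathcal{P}_\upsilon^{a(\Pi_{\sigma^+, \upsilon})})$ but no fixed vector for any strictly smaller paramodular level; this is precisely the assertion that $\Pi_{\sigma^+}$ has global paramodular level $N$.

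The conceptually easy part is the assembly into a global level; the actual work is entirely local and has already been done in Theorem \ref{th10}. The main obstacle I anticipate is bookkeeping: one must match each pair $(\tau_{1,\upsilon}, \tau_{2,\upsilon})$ to the correct one of the sub-cases (Ia), (I$\text{b}_1$), (I$\text{b}_2$), (III), (IV), ($\text{V}_1$)--($\text{V}_6$), ($\text{VI}_1$)--($\text{VI}_6$), and one must verify that genericity of $\pi$ rules out the cases in which $\theta((\tau_{1,\upsilon},\tau_{2,\upsilon}))$ fails to be paramodular (as noted in Remark \ref{r15}). Supercuspidal local constituents (case II) would in principle be an obstacle because Theorem \ref{th10} does not yield an explicit formula there; however, even in that case one knows $a(\Pi_{\sigma^+, \upsilon}) \geq 2$ is a well-defined finite integer because the lift is again paramodular by Theorem \ref{th8}, so the global level remains a well-defined positive integer. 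Illustrative examples carrying out this matching in concrete situations are deferred to Section \ref{sec8}.
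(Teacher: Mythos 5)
Your proposal is correct and follows essentially the same route as the paper: Takeda's factorization $\Pi_{\sigma^{+}} = \otimes_{\upsilon}\theta(\sigma^{+}_{\upsilon})^{*}$ reduces everything to the local level computations of Theorem \ref{th10}, and the global level is assembled as the product of the local minimal paramodular levels. Your additional remarks on genericity guaranteeing paramodularity at every finite place and on the supercuspidal caveat are consistent with Remarks \ref{r12} and \ref{r15} and with the non-supercuspidal hypothesis in Theorem \ref{th10}.
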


\subsection{Some results on the strict endoscopic part}
The above cases give us an explicit way to calculate the level of local theta lift from $GSO(2, 2)(F)$ to $GSp(4, F)$ although the calculation is massive and complicated. In the next section we present some useful examples for Theorem \ref{th10}. Moreover, Section \ref{sec9} contains an application of this method to prove the conjecture on strict endoscopic part in the case of trivial level structure. Now we close this section by relating the results in this section with the strict endoscopic part of $H_{!}^{3}(X_{N}, \mathbb{V}_{l, m})$ for a fix local system $\mathbb{V}_{l, m}$.\\

\noindent For any non-negative integer $N$, consider the compact open subgroup $K_{N}$ defined by $K_{N} = \prod_{p \mid N} K(\mathcal{P}^{ord_{N}(p)})$, where $\mathcal{P}$ is generated by prime number $p$ and $K(\mathcal{P}^{n})$ is defined in (\ref{16}). Let $X_{K_{N}} := _{K_{N}}M_{G}$ denote the complex points of the $K_{N}$-component of the Shimura variety $M_{G}$.
Under the isomorphism in (\ref{5}), the strict endoscopic part of $H_{!}^{3}(X_{K_{N}}, \mathbb{V}_{l, m})$ which is denoted by $H^{3}_{\text{End}^{\text{s}}}(X_{K_{N}}, \mathbb{V}_{l, m})$ is defined by 
\begin{equation}
\label{18}
H^{3}_{\text{End}^{\text{s}}}(X_{K_{N}}, \mathbb{V}_{l, m}) \cong \bigoplus_{\pi = \pi_{\infty}\otimes \pi_{fin}} m(\pi)(\pi_{fin})^{K_{N}}, 
\end{equation}
 where the sum runs over all irreducible cuspidal automorphic representations of $G(\mathbb{A})$ such that $\pi_{\infty}$ is a discrete series representation of $GSp(4, \mathbb{R})^{+}$ isomorphic with $\pi_{\Lambda + \rho}^{(1, 2 )}$ or $\pi_{\Lambda + \rho}^{(2, 1)}$. Here $(\pi_{fin})^{K_{N}}$ denotes the invariant elements of $V_{fin} = \bigotimes_{\text{$\upsilon$ is finite place}} V_{\upsilon}$ under the compact open subgroup $K_{N}$.\\
 
\noindent Now we define the set 
\begin{equation}
\label{19}
\mathbb{V}_{\Theta}(K_{N}) := \lbrace \Theta(f_{1}, f_{2}) = \Pi_{\sigma^{+}, fin} \mid \Pi_{\sigma^{+}, fin} \quad \text{has level $K_{N}$}  \rbrace,
\end{equation} 
where $\Theta$ is defined in (\ref{15}) and $(f_{1}, f_{2})$ is a pair of newforms of $GL(2)$ with trivial central characters such that $f_{1}$ has the weight $l + m + 4$ and $f_{2}$ has the weights $l -m +2$ or $m - l -2$. Therefore $\dim((\Pi_{\sigma^{+}, fin})^{K_{N}}) = 1$. Let 
\begin{equation}
\label{20}
V_{\Theta}(K_{N}) = \bigoplus_{\text{Over the set $\mathbb{V}_{\Theta}(K_{N})$}} m(\pi)(\Pi_{\sigma^{+}, fin})^{K_{N}}.
\end{equation}
Now it is well-known that $H_{!}^{3}(X_{K_{N}}, \mathbb{V}_{l, m})$ is finite dimensional. This implies that $H^{3}_{\text{End}^{\text{s}}}(X_{K_{N}}, \mathbb{V}_{l, m})$ is finite dimensional too. Also we know that $V_{\Theta}(K_{N})$ is a non-zero subspace of the strict endoscopic part $H^{3}_{\text{End}^{\text{s}}}(X_{K_{N}}, \mathbb{V}_{l, m})$ (Theorem \ref{th5}). Moreover if we fix $K_{N}$, then by checking the cases in the previous part, we are able to find all possible choices of $(f_{1}, f_{2})$ that $\Theta(f_{1}, f_{2})$ is an element of $\mathbb{V}_{\Theta}(K_{N})$. Therefore we have 

\begin{corollary}
\label{c4}
Let $\mathbb{V}_{l, m}$ be a local system on $X_{K_{N}}$ such that $l - m \geq 2$ and $m \geq 2$. Then ,with the above notation, for the compact open level subgroup  $K_{N}$ of $GSp(\mathbb{A})$, we have $V_{\Theta}(K_{N})$ is a non-zero subspace of $H^{3}_{\text{End}^{\text{s}}}(X_{K_{N}}, \mathbb{V}_{l, m})$. Also dimension of $V_{\Theta}(K_{N})$ over $\mathbb{C}$ is determined completely for fixed $K_{N}$.
\end{corollary}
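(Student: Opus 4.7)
The plan is to package the earlier results into the statement: the injective map $\Theta$ defined in (\ref{15}) produces paramodular strict endoscopic classes, and the level formula of Theorem \ref{th10} pins down exactly which pairs of newforms contribute at a given paramodular level $K_N$.

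First I would start from a pair of newforms $(f_1, f_2)$ with trivial central characters, $f_1$ of weight $l+m+4$ and $f_2$ of weight $l-m+2$ or $m-l-2$, so that the corresponding $\pi(\tau_1,\tau_2)$ is generic and satisfies the archimedean hypothesis of Theorem \ref{th5}. That theorem places $\Pi_{\sigma^+,fin}$ in $H^{3}_{\text{End}^{\text{s}}}(M_G, \mathbb{V}(\lambda))$, while Corollary \ref{c2} ensures the assignment $(f_1,f_2) \mapsto \Pi_{\sigma^+,fin}$ is injective, since $\sigma^+$ is the only extension of $\pi(\tau_1,\tau_2)$ whose global theta lift to $G(\mathbb{A})$ survives. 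Because $\Pi_{\sigma^+}$ is generic, each local component $\Pi_{\sigma^+,v}$ is generic, hence paramodular by Theorem \ref{th8}, so its paramodular level is well defined, and by Theorem \ref{th7} the space of $K_N$-fixed vectors is one-dimensional at the minimal level. Strong multiplicity one for generic representations of $GSp(4,\mathbb{A})$ (Remark \ref{r10}) then yields $m(\Pi_{\sigma^+}) = 1$.

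Next I would restrict to those pairs $(f_1, f_2)$ whose image under $\Theta$ has global paramodular level exactly $K_N$; by Corollary \ref{c3}, this level is computed from the local conductors $a(\tau_{i,v})$ through the case-by-case list of Theorem \ref{th10}. Combining injectivity with one-dimensionality at the minimal paramodular level gives
\begin{equation*}
\dim V_\Theta(K_N) = \#\{(f_1,f_2) : \Theta(f_1,f_2) \text{ has paramodular level } K_N\},
\end{equation*}
a finite quantity because $H^{3}_{!}(X_{K_N}, \mathbb{V}_{l,m})$ is itself finite-dimensional, and an explicit combinatorial function of $K_N$ and the weights. Non-vanishing for a suitable $K_N$ is reduced to exhibiting a single pair of newforms of the required weights and prescribed local conductors; for instance, a pair of spherical newforms realises the trivial paramodular level.

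The substantive content has already been developed in Sections \ref{sec3}--\ref{sec7}: the non-trivial input of the corollary lies in Theorem \ref{th10}, where one must walk through the ramification scenarios of Theorem \ref{th9} and match each local theta lift against the Roberts--Schmidt paramodular table. The remaining difficulty here is only the bookkeeping needed to assemble local levels into the global level $K_N$; once that is in place, the corollary follows by collecting terms.
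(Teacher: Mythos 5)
Your proposal is correct and follows essentially the same route as the paper: the paper justifies Corollary \ref{c4} by the paragraph preceding it, namely non-vanishing of $V_{\Theta}(K_N)$ via Theorem \ref{th5}, finite-dimensionality inherited from $H^{3}_{!}(X_{K_N},\mathbb{V}_{l,m})$, and determination of the dimension by enumerating, through the case analysis of Theorem \ref{th10} and Corollary \ref{c3}, the pairs $(f_1,f_2)$ whose lift has paramodular level exactly $K_N$. Your additional observations (paramodularity of the generic local components via Theorem \ref{th8}, one-dimensionality at the minimal level via Theorem \ref{th7}, and multiplicity one from formula (\ref{f})) are consistent with, and implicit in, the paper's argument.
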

\begin{remark}
\label{r16}
Note that we did not provide a formula for dimension of  $V_{\Theta}(K_{N})$. Practically, for large $N$ it is very complicated to find this dimension because of massive calculation. However, we will see in Section \ref{sec9} that for small levels we can prove non-trivial results by using this approach.
\end{remark} 
\begin{remark}
\label{r17}
Note that $X_{N} := \mathcal{A}_{2, N}(\mathbb{C})$ is embedded into  $K(N)$-component of the Shimura variety $M_{G}$. Moreover $K(N)$ is a proper finite index subgroup of $K_{N}$ where $K(N)$ defined in (\ref{level}). Under the isomorphism in (\ref{5}) it is clear that if $(\pi_{fin})^{K_{N}}$ contributes in $H^{3}_{\text{End}^{\text{s}}}(X_{K_{N}}, \mathbb{V}_{l, m})$  under the isomorphism in (\ref{5}), then it will contribute in $H^{3}_{\text{End}^{\text{s}}}(X_{N}, \mathbb{V}_{l, m})$. This means that $V_{\Theta}(K_{N})$ is a subspace of $H^{3}_{\text{End}^{\text{s}}}(X_{N}, \mathbb{V}_{l, m})$. Therefore we can find a non-zero subspace with determined dimension of $H^{3}_{\text{End}^{\text{s}}}(X_{N}, \mathbb{V}_{l, m})$. Note that if we fix $K(N)$ (as we did with $K_{N}$), then we can define $\mathbb{V}_{\Theta}(K(N))$ as we did in (\ref{19}). But $\Theta(f_{1}, f_{2})$'s can have different paramodular levels. Moreover $(\Pi_{\sigma^{+}, fin})^{K(N)}$ is not one dimensional. This shows that we face extra difficulties to calculate the dimension of $V_{\Theta}(K(N))$. Finally we should mention that for trivial level structure $N = 0$, $K(N) = K_{N} = GSp(4, \widehat{\mathbb{Z}})$. We will use this fact in Section \ref{sec9}.     
\end{remark}

\section{Examples}
\label{sec8}
This section contains some examples in which we apply the method in Section \ref{sec7} to practice the calculation of the level structure of theta lift of a pair of representations of $GL(2, \mathbb{A})$, $(\tau_{1}, \tau_{2})$, to $GSp(4, \mathbb{A})$. We hope that it clarifies in greater details the machinery that explained in Section \ref{sec7}. 

\subsection{Unramified Representations}
We first calculate the level of theta lift of $\pi = \pi(\tau_{1}, \tau_{2})$, where $(\tau_{1}, \tau_{2})$ denotes a pair of cuspidal representations of $GL(2, \mathbb{A})$ with trivial central characters corresponded to a pair of newforms $(f_{1}, f_{2})$ of weights $k$ and $l$ ($k \neq l$) with trivial level.\\

\noindent Let $\tau_{i} = \otimes_{\upsilon} \tau_{i, \upsilon}$ ($i = 1, 2$). Because we assume that $f_{i}$'s have trivial levels, therefore each $\tau_{i, \upsilon}$ is an unramified representation of $GL(2, \mathcal{P}_{\upsilon})$. By the classical Jacquet-Langlands theory (\cite{15}), we know that every irreducible admissible unramified representation of $GL(2, \mathcal{P}_{\upsilon})$ is isomorphic with a principal series $\pi(\chi, \chi^{'})$, where $\chi$ and $\chi^{'}$ are unramified characters of $\mathcal{P}_{\upsilon}^{\times}$ such that $\chi (\chi^{'})^{-1} \neq \nu^{\pm 1}$ . Therefore, we assume $\tau_{i, \upsilon} = \pi(\chi_{i, \upsilon}, \chi_{i, \upsilon}^{'})$ for each place $\upsilon$.\\

According to Section \ref{sec7} to find the level of the irreducible constituent, $\Pi(\pi)$, of  $\Theta(\pi)$, we need to determine the level of all local theta lifts $\theta((\tau_{1, \upsilon}, \tau_{2, \upsilon}))$ at each finite place $\upsilon$. Under the unramified assumption, it means that we should calculate the level of $\theta((\tau_{1, \upsilon}, \tau_{2, \upsilon}))$, where  $\tau_{i, \upsilon} = \pi(\chi_{i, \upsilon}, \chi_{i, \upsilon}^{'})$ such that $\chi_{i, \upsilon} (\chi^{'}_{i, \upsilon})^{-1} \neq \nu^{\pm 1}$.\\

\noindent Fix the finite place $\nu$. According to  $\tau_{i, \upsilon} = \pi(\chi_{i, \upsilon}, \chi_{i, \upsilon}^{'})$, we should consider one of the cases VI$_{1}$ to VI$_{6}$. But we assume that  $\chi_{2, \upsilon} (\chi^{'}_{2, \upsilon})^{-1} \neq \nu^{\pm 1}$, this means that $\mid \chi_{2, \upsilon}/ \chi^{'}_{2, \upsilon} \mid \neq \nu^{\pm 1}$. Therefore only the cases VI$_{1}$ and VI$_{3}$ should consider here.\\

\noindent In the case VI$_{1}$, the level of $\theta((\tau_{1, \upsilon}, \tau_{2, \upsilon}))$ is $a(\chi_{2, \upsilon}^{'}) + a(\chi_{2, \upsilon}) + 2a(\chi_{1, \upsilon})$. But all the characters have zero level (unramified). This means that $\theta((\tau_{1, \upsilon}, \tau_{2, \upsilon}))$ has trivial level, i.e., it is unramified.\\

\noindent In the case VI$_{3}$, $\theta((\tau_{1, \upsilon}, \tau_{2, \upsilon}))$ is unramified except if $\chi_{1, \upsilon} = \nu^{\pm 1}\sigma$ where $\sigma$ is ramified. But this case never happens because $\chi_{1, \upsilon}$ is unramified. Therefore, if $(\tau_{1, \upsilon}, \tau_{2, \upsilon})$ is unramified, then $\theta((\tau_{1, \upsilon}, \tau_{2, \upsilon}))$ is unramified too. In summary we have 

\begin{corollary}
\label{c5}
Let $\pi = \pi(\tau_{1}, \tau_{2})$, where $(\tau_{1}, \tau_{2})$ denotes a pair of cuspidal automorphic representations of $GL(2, \mathbb{A})$ with trivial central characters corresponded to a pair of newforms $(f_{1}, f_{2})$ of weights $k$ and $l$ ($k \neq l$) with trivial level. Then, $\Pi(\pi)$ is an unramified irreducible cuspidal automorphic representation of $GSp(4, \mathbb{A})$.\\
\end{corollary}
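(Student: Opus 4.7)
The strategy is to leverage the factorization $\Pi_{\sigma^{+}} = \bigotimes_{v} \theta(\sigma^{+}_{v})^{*}$ over local places recorded in Section \ref{sec5}, so that the assertion reduces to showing that at every finite place $v$ the local theta lift $\theta((\tau_{1,v}, \tau_{2,v}))$ to $GSp(4, \mathbb{Q}_{v})$ has trivial paramodular level. Because each newform $f_{i}$ has trivial level, each local component $\tau_{i,v}$ is an irreducible unramified principal series $\pi(\chi_{i,v}, \chi'_{i,v})$ with both characters unramified and satisfying the irreducibility condition $\chi_{i,v}/\chi'_{i,v} \neq \nu^{\pm 1}$; in particular $\tau_{i,v}$ is neither discrete series, a twist of Steinberg, nor supercuspidal.

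Consequently, of the seven groups of subcases (I)--(VI) underlying Theorem \ref{th10}, only configuration (VI) is relevant, and within (VI) I will inspect each of the six subcases (VI$_{1}$)--(VI$_{6}$). The first step is to rule out (VI$_{2}$), (VI$_{4}$), (VI$_{5}$), and (VI$_{6}$): their defining relations among $\chi_{2,v}'/\chi_{1,v}$, $\chi_{2,v}/\chi_{1,v}$, and $\chi_{1,v}$ each force $\chi_{2,v}/\chi'_{2,v} = \nu^{\pm 1}$, which contradicts the irreducibility of $\tau_{2,v}$. Only (VI$_{1}$) and (VI$_{3}$) survive. In (VI$_{1}$), Theorem \ref{th10} gives paramodular level $a(\chi'_{2,v}) + a(\chi_{2,v}) + 2a(\chi_{1,v}) = 0$, since all four characters are unramified. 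In (VI$_{3}$), the lift has level zero provided the auxiliary character $\sigma$ defined by $\chi_{1,v} = \nu^{-1/2}\sigma$ is unramified, which is automatic from $\chi_{1,v}$ being unramified.

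Assembling these local facts through Corollary \ref{c3} shows that $\Pi(\pi) = \Pi_{\sigma^{+}}$ is spherical with respect to $GSp(4, \widehat{\mathbb{Z}})$, i.e., unramified at every finite place, yielding the claim. The only genuine point requiring care, and therefore the main (mild) obstacle, is the elimination of the four degenerate subcases (VI$_{2}$), (VI$_{4}$), (VI$_{5}$), (VI$_{6}$); this is a direct verification from the reducibility conditions listed at the start of each subcase, combined with the irreducibility hypothesis on $\tau_{2,v}$ inherited from the trivial-level assumption on $f_{2}$.
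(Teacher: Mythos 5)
Your proposal is correct and follows essentially the same route as the paper: reduce to the local theta lifts via the factorization $\Pi_{\sigma^{+}} = \otimes_{\upsilon}\theta(\sigma^{+}_{\upsilon})^{*}$, observe that only case (VI) applies since each $\tau_{i,\upsilon}$ is an irreducible unramified principal series, eliminate (VI$_{2}$), (VI$_{4}$), (VI$_{5}$), (VI$_{6}$) because they force $\chi_{2,\upsilon}/\chi'_{2,\upsilon} = \nu^{\pm 1}$, and check that (VI$_{1}$) and (VI$_{3}$) give level zero when all characters are unramified. Your treatment is, if anything, slightly more explicit than the paper's about why the degenerate subcases are excluded, but the argument is the same.
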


\subsection{The square free level}
We keep our notation in the previous part. This time we assume that $f_{1}$ has level $N_{1} = \prod_{i}^{n_{1}} p_{i}$, where $p_{i}$'s are distinct primes and $f_{2}$ has trivial level. The fact that  $f_{1}$ has the level $N_{1} = \prod_{i = 1}^{n_{1}} p_{i}$ implies that if $\upsilon \in \lbrace p_{1}, \cdots p_{n_{1}}\rbrace$, then $\tau_{1, \upsilon} = \chi_{\upsilon}\St_{GL(2)}$, where $\chi_{\upsilon}$ is an unramified character. We want to find the level of $\Pi(\pi)$ under the above assumption. If $\upsilon \not \in \lbrace p_{1}, \cdots p_{n_{1}}\rbrace $, then $(\tau_{1, \upsilon}, \tau_{2, \upsilon})$ is unramified. Therefore, $\theta((\tau_{1, \upsilon}, \tau_{2, \upsilon}))$ is unramified. Let assume that $\upsilon = p_{i}$. We should find the level of $\theta((\tau_{1, p_{i}}, \tau_{2, p_{i}}))$, where
$\tau_{1, p_{i}} =  \chi_{p_{i}}\St_{GL(2)}$ such that $\chi_{p_{i}}$ is an unramified character, and $\tau_{2, p_{i}} = \pi(\chi_{2, p_{i}}, \chi_{2, p_{i}}^{'})$. This is clear that we should follow cases V$_{3}$ and V$_{4}$ in Section \ref{sec7}.\\

\noindent If we assume $\chi_{p_{i}}\chi_{2, p_{i}}^{-1} \neq \nu^{\pm 1}$, we are in the case V$_{3}$. This means that the level of $\theta((\tau_{1, p_{i}}, \tau_{2, p_{i}}))$ is $a(\chi_{2, p_{i}}(\chi_{p_{i}}\St_{GL(2)} \otimes \chi_{2, p_{i}})) + 2a(\chi_{2, p_{i}})$. But all the characters are unramified, so the level of $\theta((\tau_{1, p_{i}}, \tau_{2, p_{i}}))$ is one at prime $p_{i}$ (see Table \ref{table3}). But if we assume $\chi_{p_{i}}\chi_{2, p_{i}}^{-1} = \nu^{\pm 1}$, the level of $\theta((\tau_{1, p_{i}}, \tau_{2, p_{i}}))$ is $a(\sigma\St_{GL(2)})$, where $\sigma$ is a character such that $\chi_{2, p_{i}} = \nu^{-1/2}\sigma$. Because $\chi_{2, p_{i}}$ is unramified, therefore, $\sigma$ is unramified. This means that the level is again one. In summary we have 

\begin{corollary}
\label{c6}
Let $\pi = \pi(\tau_{1}, \tau_{2})$, where $(\tau_{1}, \tau_{2})$ denotes a pair of cuspidal automorphic representations of $GL(2, \mathbb{A})$ with trivial central characters corresponded to a pair of newforms $(f_{1}, f_{2})$ of weights $k$ and $l$ ($k \neq l$), $f_{1}$ has level $\prod_{i = 1}^{n_{1}} p_{i}$ (distinct primes), and $f_{2}$ has trivial level. Then $\Pi(\pi)$ is an irreducible cuspidal automorphic representation of $GSp(4, \mathbb{A})$ of level $\prod_{i = 1}^{n_{1}} p_{i}$.\\
\end{corollary}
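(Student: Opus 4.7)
The plan is to compute the paramodular conductor of $\Pi(\pi)$ place by place, leveraging the factorization $\Pi_{\sigma^{+}} = \bigotimes_{\upsilon} \theta(\pi_{\upsilon}^{+})^{*}$ from Section~\ref{sec5}, and then multiplying local contributions to obtain the global level.

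First I would record the local shape of each $\tau_{i,\upsilon}$. Since $f_{2}$ has trivial level, each $\tau_{2,\upsilon}$ is an unramified principal series $\pi(\chi_{2,\upsilon},\chi_{2,\upsilon}')$ with unramified characters. Since $f_{1}$ has squarefree level $\prod p_{i}$, Table~\ref{table3} forces $\tau_{1,\upsilon}$ to be unramified for $\upsilon \notin \{p_{1},\ldots,p_{n_{1}}\}$ and $\tau_{1,p_{i}} = \chi_{p_{i}}\,\St_{GL(2)}$ with $\chi_{p_{i}}$ unramified at each $p_{i}$ (the triviality of the central character of $\tau_{1}$ forces $\chi_{p_{i}}^{2}=1$, but only unramifiedness is needed for the conductor computation).

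Second, at every finite $\upsilon \notin \{p_{1},\ldots,p_{n_{1}}\}$ the pair $(\tau_{1,\upsilon},\tau_{2,\upsilon})$ consists of unramified principal series, so Corollary~\ref{c5} applies and $\theta((\tau_{1,\upsilon},\tau_{2,\upsilon}))$ is unramified, i.e.\ of paramodular level $0$. At each prime $p_{i}$, case (v) of Theorem~\ref{th9} gives
\begin{equation*}
\theta((\tau_{1,p_{i}},\tau_{2,p_{i}})) = J_{P}\bigl(\chi_{p_{i}}\,\St_{GL(2)}\otimes\chi_{2,p_{i}}^{-1},\,\chi_{2,p_{i}}\bigr).
\end{equation*}
I would then split according to whether $\chi_{p_{i}}\chi_{2,p_{i}}^{-1} = \nu^{\pm 1/2}$. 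In the irreducible subcase V$_{3}$ of Section~\ref{subsec3}, the lift is a Type X representation, whose level from Table~A.12 of \cite{16} equals $a(\chi_{p_{i}}\,\St_{GL(2)}) + 2a(\chi_{2,p_{i}}) = 1 + 0 = 1$. In the reducible subcase V$_{4}$, the lift is a Type VIc representation $L(\nu^{1/2}\,\St_{GL(2)},\nu^{-1/2}\sigma)$ with $\sigma$ unramified (automatic from the unramifiedness of $\chi_{2,p_{i}}$), whose paramodular level is again $1$.

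Finally, multiplying across all places yields the global paramodular level $\prod_{i=1}^{n_{1}} p_{i}^{1} = \prod_{i=1}^{n_{1}} p_{i}$, and irreducibility and cuspidality of $\Pi(\pi)$ come from Corollary~\ref{c2} applied to the generic representation $\pi(\tau_{1},\tau_{2})$. The only delicate point is checking in subcase V$_{4}$ that the Langlands quotient is actually paramodular (not merely admissible), which reduces to the unramifiedness of $\sigma$; everything else is bookkeeping against Tables~A.1 and A.12 of \cite{16}. The conceptual takeaway is that at each $p_{i}$, Steinberg-tensor-unramified lifts to a paramodular representation of minimal level $1$, perfectly mirroring the squarefree-level phenomenon for $GL(2)$.
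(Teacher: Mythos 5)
Your proposal is correct and follows essentially the same route as the paper: unramified local lifts away from the $p_{i}$ via Corollary \ref{c5}, and at each $p_{i}$ the case analysis V$_{3}$/V$_{4}$ of Section \ref{subsec3} (Type X with level $1+0=1$, or Type VIc with level $1$), multiplied together to give $\prod_{i} p_{i}$. The only difference is cosmetic: you state the reducibility criterion as $\chi_{p_{i}}\chi_{2,p_{i}}^{-1}=\nu^{\pm 1/2}$, consistent with case V$_{3}$/V$_{4}$, where the paper's example writes $\nu^{\pm 1}$ — your version is the correct one for the Siegel-parabolic induction $\chi'\St_{GL(2)}\otimes\chi^{-1}\rtimes\chi$.
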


Now we consider that $f_{1}$ has the level $\prod_{i=1}^{n_{1}} p_{i}$ and $f_{2}$ has the level $\prod_{i=1}^{n_{2}} q_{i}$ such that $\lbrace p_{1}, \cdots p_{n_{1}}, q_{1}, \cdots q_{n_{2}} \rbrace$ is a set of distinct primes. In this case, it is obvious that with a same argument as in Corollary \ref{c6} we have

\begin{corollary}
\label{c7}
Let $\pi = \pi(\tau_{1}, \tau_{2})$, where $(\tau_{1}, \tau_{2})$ denotes a pair of cuspidal automorphic representations of $GL(2, \mathbb{A})$ with trivial central characters corresponded to a pair of newforms $(f_{1}, f_{2})$ of weights $k$ and $l$ ($k \neq l$), $f_{1}$ has level $N_{1} = \prod_{i = 1}^{n_{1}} p_{i}$ (distinct primes), and $f_{2}$ has level $N_{2} = \prod_{i = 1}^{n_{2}} q_{i}$ such that all primes are distinct. Then $\Pi(\pi)$ is an irreducible cuspidal automorphic representation of $GSp(4, \mathbb{A})$ of level $N_{1}N_{2}$.\\
\end{corollary}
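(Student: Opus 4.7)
The plan is to prove Corollary \ref{c7} by a place-by-place computation, invoking Theorem \ref{th10} and the exhaustive case analysis in Subsection \ref{subsec3}, and then appealing to the multiplicativity of paramodular levels that was recorded just before Corollary \ref{c3}. Concretely, write $\Pi_{\sigma^{+}} = \bigotimes_{\upsilon} \Pi_{\sigma^{+}, \upsilon}$, where $\Pi_{\sigma^{+}, \upsilon} = \theta((\tau_{1, \upsilon}, \tau_{2, \upsilon}))^{*}$, and observe that the global level equals $\prod_{\upsilon} p_{\upsilon}^{a(\Pi_{\sigma^{+}, \upsilon})}$. So it suffices to compute the local levels $a(\Pi_{\sigma^{+}, \upsilon})$ for each finite place $\upsilon$.

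First I would dispose of the unramified places, namely $\upsilon \notin \{p_{1}, \ldots, p_{n_{1}}, q_{1}, \ldots, q_{n_{2}}\}$. There both $\tau_{1, \upsilon}$ and $\tau_{2, \upsilon}$ are unramified principal series $\pi(\chi, \chi')$ with $\chi(\chi')^{-1} \neq \nu^{\pm 1}$, so Corollary \ref{c5} (or equivalently cases $\text{VI}_{1}$ and $\text{VI}_{3}$ of Subsection \ref{subsec3}) gives that $\theta((\tau_{1, \upsilon}, \tau_{2, \upsilon}))$ is unramified, contributing level $0$ at $\upsilon$.

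Next I would handle the places $\upsilon = p_{i}$ in the support of $N_{1}$. Here $\tau_{1, p_{i}} = \chi_{p_{i}} \St_{GL(2)}$ with $\chi_{p_{i}}$ unramified, whereas $\tau_{2, p_{i}}$ is an unramified principal series because $p_{i} \notin \{q_{1}, \ldots, q_{n_{2}}\}$ by the disjointness hypothesis. This is exactly the local input of Corollary \ref{c6}: cases $\text{V}_{3}$ and $\text{V}_{4}$ both yield local paramodular level $1$ at $p_{i}$, regardless of whether $\chi_{p_{i}} \chi_{2, p_{i}}^{-1} = \nu^{\pm 1}$ or not, because every character in sight is unramified. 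To handle the remaining places $\upsilon = q_{j}$, I would invoke the symmetry $\theta((\tau_{1, q_{j}}, \tau_{2, q_{j}})) = \theta((\tau_{2, q_{j}}, \tau_{1, q_{j}}))$ provided by Theorem \ref{th9}; since $q_{j}$ is not in the support of $N_{1}$, this swaps the roles and reduces to the previous case, giving local level $1$ at $q_{j}$.

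Collecting these contributions yields $N_{\Pi_{\sigma^{+}}} = \prod_{i = 1}^{n_{1}} p_{i}^{1} \cdot \prod_{j = 1}^{n_{2}} q_{j}^{1} = N_{1} N_{2}$, which is the desired formula. The proof is essentially bookkeeping: the only point to be careful about is that the disjointness hypothesis on the prime sets is exactly what guarantees that at each ramified place precisely one of $\tau_{1, \upsilon}$ or $\tau_{2, \upsilon}$ is Steinberg-type and the other is unramified, so that the ``hard'' case $\text{IV}$ (two distinct twists of Steinberg) never intervenes. If the primes were allowed to coincide, one would have to run through that case and the level formula would no longer be multiplicative in such a clean fashion; here, however, the hypothesis makes the argument a direct reduction to Corollary \ref{c6} place by place.
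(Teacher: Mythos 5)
Your proposal is correct and follows the same route as the paper, which proves Corollary \ref{c7} by remarking that it is ``the same argument as in Corollary \ref{c6}'': level $0$ at unramified places, level $1$ at each $p_{i}$ via cases $\text{V}_{3}$/$\text{V}_{4}$, level $1$ at each $q_{j}$ by the symmetry $\theta((\tau_{1},\tau_{2})) = \theta((\tau_{2},\tau_{1}))$, and multiplicativity of the paramodular level. Your observation that the disjointness hypothesis is precisely what keeps case $\text{IV}$ from intervening is also exactly the point the paper makes when it passes to Corollary \ref{c8}.
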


The next interesting case is considering $\pi = \pi(\tau_{1}, \tau_{2})$, where $(\tau_{1}, \tau_{2})$ denotes a pair of cuspidal automorphic representations of $GL(2, \mathbb{A})$ with trivial central characters corresponded to a pair of newforms $(f_{1}, f_{2})$ of weights $k$ and $l$ ($k \neq l$), $f_{1}$ has level $N_{1} = \prod_{i = 1}^{n_{1}} p_{i}$ (distinct primes), and $f_{2}$ has level $N_{2} = \prod_{i = 1}^{n_{2}} q_{i}$ such that $p_{k} = q_{k}$ for some $k$'s. We are going to assume that $p_{k} = q_{k}$ at only one place. Then, we can find the general case by using induction. Fix the index $k$ such that $p_{k} = q_{k}$ only at this index. Again, if we want to find the level of $\Pi(\pi)$, we need to find the answer at each place $\upsilon$. According to Corollaries \ref{c6} and \ref{c7}, the answer is clear for all places except $\upsilon = p_{k}$. Because $\tau_{i, p_{k}}$'s have level one at $p_{k}$, it means that $\tau_{i, p_{k}} = \chi_{i, p_{k}}\St_{GL(2)}$. Therefore, it is clear that at $p_{k}$, we should consider the case IV. Note that we know that $\chi_{i, p_{k}}$ are unramified, therefore, $\sigma$ and $\xi$ in the case IV are unramified too. Therefore, the level of $\theta((\tau_{1, p_{k}}, \tau_{2, p_{k}}))$ is $2$. In summary we have

\begin{corollary}
\label{c8}
Let $\pi = \pi(\tau_{1}, \tau_{2})$, where $(\tau_{1}, \tau_{2})$ denotes a pair of cuspidal automorphic representations of $GL(2, \mathbb{A})$ with trivial central characters corresponded to a pair of newforms $(f_{1}, f_{2})$ of weights $k$ and $l$ ($k \neq l$), $f_{1}$ has level $N_{1} = \prod_{i = 1}^{n_{1}} p_{i}$ (distinct primes), and $f_{2}$ has level $N_{2} = \prod_{i = 1}^{n_{2}} q_{i}$ such that for any $j \in J$, then $p_{j} = q_{j}$. Then, $\Pi(\pi)$ is an irreducible cuspidal automorphic representation of $GSp(4, \mathbb{A})$ of level $\prod_{i \not \in J } p_{i} \prod_{i \not \in J}q_{i} \prod_{j \in J}p_{j}^{2}$.\\
\end{corollary}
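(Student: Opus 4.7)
The plan is to compute the level of $\Pi(\pi)=\bigotimes_{\upsilon}\theta((\tau_{1,\upsilon},\tau_{2,\upsilon}))^{*}$ prime by prime, following the same template as the proofs of Corollaries \ref{c5}--\ref{c7}, and then multiply the local contributions. At any finite place $\upsilon$ not dividing $N_{1}N_{2}$, both $\tau_{1,\upsilon}$ and $\tau_{2,\upsilon}$ are unramified principal series, so by Corollary \ref{c5} (equivalently, case VI$_{1}$ of Section \ref{sec7}) the local theta lift is unramified. Consequently the support of the level of $\Pi(\pi)$ is contained in $\{p_{i}:i\notin J\}\cup\{q_{i}:i\notin J\}\cup\{p_{j}=q_{j}:j\in J\}$.

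Next I would treat the primes at which exactly one of the two representations is ramified. At $\upsilon=p_{i}$ with $i\notin J$, the squarefree-level and trivial-nebentypus hypotheses on $f_{1}$ force $\tau_{1,\upsilon}=\chi_{1,\upsilon}\,\St_{GL(2)}$ with $\chi_{1,\upsilon}$ unramified, while $\tau_{2,\upsilon}$ remains an unramified principal series. This is exactly the setting of the proof of Corollary \ref{c6}: we land in case V$_{3}$ of Section \ref{sec7} in general, and in the degenerate subcase $\chi_{1,\upsilon}\chi_{2,\upsilon}^{-1}=\nu^{\pm 1}$ in case V$_{4}$, and in both cases the explicit conductor formula gives local level $1$. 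The symmetric argument, with the roles of $\tau_{1}$ and $\tau_{2}$ exchanged, covers the places $\upsilon=q_{i}$ with $i\notin J$ and again yields local level $1$.

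The essential new case is $\upsilon=p_{j}=q_{j}$ for $j\in J$, where both $\tau_{i,\upsilon}=\chi_{i,\upsilon}\,\St_{GL(2)}$ are Steinberg-type with unramified twisting characters. The trivial central character assumption on each $\tau_{i}$ forces $\chi_{1,\upsilon}^{2}=\chi_{2,\upsilon}^{2}=1$, placing us in case IV of Section \ref{sec7}. Writing $\xi=\chi_{2,\upsilon}/\chi_{1,\upsilon}$ and $\sigma=\chi_{1,\upsilon}$, both $\xi$ and $\sigma$ are unramified, so among the four branches listed for the Type Va representation $\delta([\xi,\nu\xi],\nu^{-1/2}\sigma)$ we fall into the first one, and the local level equals $2$. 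Multiplying the local contributions $1$ for each $p_{i}$ and $q_{i}$ with $i\notin J$ and $2$ for each $p_{j}=q_{j}$ with $j\in J$, and invoking Corollary \ref{c3}, yields
\[ a(\Pi(\pi)) \;=\; \prod_{i\notin J} p_{i}\,\prod_{i\notin J} q_{i}\,\prod_{j\in J} p_{j}^{\,2}, \]
which is the claimed level.

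The only real delicacy is to verify that at each shared prime $p_{j}=q_{j}$ both characters $\xi$ and $\sigma$ genuinely land in the unramified branch of case IV, since the other three branches of Type Va produce strictly larger conductors. This verification, however, is immediate from the combination of squarefree local level and trivial nebentypus at $p_{j}$, and amounts to the main bookkeeping point in an otherwise purely mechanical place-by-place analysis.
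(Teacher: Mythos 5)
Your proposal is correct and follows essentially the same route as the paper: unramified places via case VI$_{1}$, the primes dividing exactly one level via cases V$_{3}$/V$_{4}$ as in Corollaries \ref{c6} and \ref{c7}, and the shared primes via case IV with $\sigma$ and $\xi$ unramified, giving local level $2$. The only cosmetic difference is that the paper reduces to a single shared prime and invokes induction, whereas you handle all places in one uniform pass; since the level is computed place by place anyway, the two formulations are equivalent.
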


\section{ Proof of Conjecture \ref{conj2}}
\label{sec9}

This section contains the proof of Conjecture \ref{conj2}. Conjecture \ref{conj2} gives an explicit motivic description of the strict endoscopic part of a local system $\mathbb{V}(\lambda)$ over $\mathcal{A}_{2} := \mathcal{A}_{2, 1}$ (i.e., the moduli space of principally polarized abelian surfaces with trivial level structure.) Specifically, in our work, we are dealing with Betti cohomology and (real) Hodge structures. Explicitly, these cohomology spaces satisfy the motivic description at (\ref{2000}).\\ 

\noindent Note that G. Faltings proved (\cite{1})
\begin{equation}
F^{3} = H_{!}^{3, 0}(X, \mathbb{V}(\lambda)) \cong S_{l - m, m + 3},
\end{equation}
where $S_{l - m, m + 3}$ defined in (\ref{Faltings}). We will prove Conjecture \ref{conj2} here. Therefore, we have
\begin{equation}
\label{30}
e_{c}(\mathcal{A}_{2}, \mathbb{V}(\lambda)) = -S[l - m, m + 3] - s_{l + m + 4}S[l - m + 2]\mathbb{L}^{m + 1} + e_{\text{Eis}}(\mathcal{A}_{2}, \mathbb{V}(\lambda)).
\end{equation}
This gives a complete structural description of compactly supported cohomology of the local system $\mathbb{V}(\lambda)$ on $\mathcal{A}_{2}$ when $\lambda$ is sufficiently regular.

\begin{theorem}
\label{MT} 
If $\mathbb{V}(\lambda)$ ($\lambda = (l, m)$) denotes a fixed local system on $\mathcal{A}_{2}$, and $\lambda$ is sufficiently regular (i.e., $l- m \geq 2$ and $m \geq 2$), then we have  
\begin{equation}
\label{2000}
e_{\text{endo}}(\mathcal{A}_{2}, \mathbb{V}(\lambda)) = -s_{l + m + 4}S[l - m + 2]\mathbb{L}^{m + 1}.
\end{equation}
\end{theorem}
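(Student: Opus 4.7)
The plan is to combine the constructive theta-lifting from Section \ref{sec6} with Tsushima's dimension formulas, Faber--van der Geer's Euler-characteristic formula, and Weissauer's multiplicity formula \eqref{f} to pin down the strict endoscopic part exactly. First I would specialize to trivial level, $K_0 = GSp(4, \widehat{\mathbb{Z}})$. By Corollary \ref{c5}, the theta lift $\Theta(f_1, f_2) = \Pi_{\sigma^+, fin}$ is unramified at every finite place precisely when $f_1$ and $f_2$ are both newforms on $SL(2, \mathbb{Z})$; the archimedean condition of Theorem \ref{th5} then forces weights $l+m+4$ for $f_1$ and $l-m+2$ for $f_2$. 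The two choices of $GO(2,2)(\mathbb{R})$-representation, $\pi(l+m+4, l-m+2; c)$ and $\pi(l+m+4, m-l-2; c)$, lift to the two non-holomorphic discrete series $\pi_{\Lambda+\rho}^{(1,2)}$ and $\pi_{\Lambda+\rho}^{(2,1)}$ of $GSp(4, \mathbb{R})^+$, producing a subspace $V_\Theta(K_0) \subset H^3_{\text{End}^{\text{s}}}(\mathcal{A}_2, \mathbb{V}(\lambda))$ of complex dimension $2 s_{l+m+4} s_{l-m+2}$.

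Next I would identify the motivic class of $V_\Theta(K_0)$. For each pair $(f_1, f_2)$, the corresponding classes in $H^{1,2}_!$ and $H^{2,1}_!$ carry Hodge types $(l+2, m+1)$ and $(m+1, l+2)$ (determined by $\Lambda$ in Table \ref{table2}) and together form a rank-$2$ real Hodge structure. The Scholl motive $S[l-m+2]$ has Hodge types $(l-m+1, 0) + (0, l-m+1)$, and twisting by the Tate motive $\mathbb{L}^{m+1}$ shifts both bidegrees by $(m+1, m+1)$, reproducing exactly this structure. Summing over the $s_{l+m+4}$ choices of $f_1$, whose Hecke eigenvalues distinguish otherwise-isomorphic copies of $S[l-m+2]\mathbb{L}^{m+1}$, yields the motivic class $s_{l+m+4} \cdot S[l-m+2] \mathbb{L}^{m+1}$ for $V_\Theta(K_0)$.

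To establish the reverse inclusion $V_\Theta(K_0) = H^3_{\text{End}^{\text{s}}}(\mathcal{A}_2, \mathbb{V}(\lambda))$, I would argue by dimension. Faltings' theorem \eqref{Faltings} identifies $F^3 \cong S_{l-m, m+3}(\Gamma_2(1))$, whose dimension is given by Tsushima's formula \cite{26}, \cite{27}; Faber--van der Geer \cite{22} provide $e_c(\mathcal{A}_2, \mathbb{V}(\lambda))$, and the Eisenstein contribution is classical. Weissauer's multiplicity formula \eqref{f} forces stable (generic) automorphic packets to appear symmetrically in all four Hodge pieces while strict endoscopic packets are concentrated in the non-holomorphic part. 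Subtracting the stable contribution to $H^{1,2}_! \oplus H^{2,1}_!$ (equal to twice its contribution to $F^3$) from the total non-holomorphic dimension leaves exactly $2 s_{l+m+4} s_{l-m+2}$, matching $\dim V_\Theta(K_0)$. This forces the equality of motivic classes in the Grothendieck group, and taking the motivic Euler characteristic with the sign $(-1)^3 = -1$ yields \eqref{2000}.

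The main obstacle is the clean separation of stable from strict endoscopic contributions in the last step. It relies crucially on Weissauer's formula \eqref{f} together with the fact (Corollary \ref{c2}) that our constructed lifts $\Pi_{\sigma^+}$ are globally generic, so they admit Whittaker models at all finite places while the archimedean holomorphic and anti-holomorphic discrete series do not. Combining this rigidity with the Tsushima and Faber--van der Geer inputs leaves no room for spurious classes, and the claimed identity \eqref{2000} follows.
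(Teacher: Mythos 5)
Your proposal follows essentially the same route as the paper: construct a $2s_{l+m+4}s_{l-m+2}$-dimensional subspace of the strict endoscopic part at level $K_0$ by theta-lifting pairs of level-one newforms (using Corollary \ref{c5} for unramifiedness and the two archimedean choices $\pi(l+m+4,\,l-m+2;c)$ and $\pi(l+m+4,\,m-l-2;c)$ for the factor of two), and then invoke the Faber--van der Geer Euler characteristic together with Tsushima's dimension formula to conclude that this subspace exhausts the strict endoscopic part. Your additional remarks on the Hodge types matching $S[l-m+2]\mathbb{L}^{m+1}$ and on Weissauer's formula \eqref{f} eliminating holomorphic endoscopic contributions at level one are elaborations of steps the paper leaves implicit, not a different argument.
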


\begin{proof} 
Since we know that the Euler characteristic of inner cohomology (\cite{43}) and have Tsushima's dimension formula (\cite{26} and \cite{27})\footnote{These formulas are massive and far from the scope of this  paper. Here we only restrict ourself to state this fact that it suffices to construct a subspace of dimension $2s_{l + m + 4}s_{l - m + 2}$ in the strict endoscopic part by using the map in (\ref{15}), where $s_{n} = \dim(S_{n}(\Gamma_{1}(1)))$ (see \cite{3}, p. 232).}, it suffices to construct a subspace of dimension $2s_{l + m + 4}s_{l - m + 2}$ in the strict endoscopic part by using the map in (\ref{15}), where $s_{n} = \dim(S_{n}(\Gamma_{1}(1)))$ (see \cite{3}, p. 232). 

Consider the spaces $S_{l + m + 4}(\Gamma_{1}(1))$ and $S_{l - m + 2}(\Gamma_{1}(1))$. These are spaces of cusp forms with trivial central characters and trivial levels. According to classical theory (\cite{15}), we can find bases $\lbrace f_{1}, \cdots f_{s_{l + m + 4}}\rbrace$
and $\lbrace g_{1}, \cdots g_{s_{l - m + 2}}\rbrace$ of newforms of $GL(2)$ for the spaces $S_{l + m + 4}(\Gamma_{1}(1))$ and $S_{l - m + 2}(\Gamma_{1}(1))$ respectively. Consider a pair $(f_{i}, g_{j})$ for $1 \leq i \leq l + m + 4$ and $1 \leq j \leq l - m + 2$. Fix the pair $(f_{i}, g_{j})$. We assign to $(f_{i}, g_{j})$ the cuspidal automorphic representation  $\pi = \pi(\tau_{f_{i}}, \tau_{g_{j}})$ of $GSO(2, 2)(\mathbb{A})$. Let $\sigma_{f_{i}, g_{j}}^{+}$ denote the extension of $\pi$ to $GO(2, 2)$. Note that $(\sigma_{f_{i}, g_{j}})_{\infty}^{+} \cong \pi_{l + m + 4, l - m + 2}$ and $\sigma_{f_{i}, g_{j}} \in \mathcal{A}_{0}(H)$, where $H = GO(2, 2)$. Now $\Theta((\sigma^{+}_{f_{i}, g_{j}}))$, defined in (\ref{15}), gives a non-zero element in the strict endoscopic part. Moreover, Corollary \ref{c5} shows that $\Theta((\sigma^{+}_{f_{i}, g_{j}}))$ is invariant under level subgroup $K_{0}$ because $\pi$ is an unramified representation. Therefore, $\Theta((\sigma^{+}_{f_{i}, g_{j}}))$ contributes in the strict endoscopic part of $H^{3}_{!}(\mathcal{A}_{2}, \mathbb{V}(\lambda))$. Moreover, according to Theorem \ref{th9} and Corollary \ref{c2}, the map $(f_{i}, g_{j}) \to \Theta((\sigma^{+}_{f_{i}, g_{j}}))$ is injective. Now we have
\begin{equation} 
\label{32}
H_{\text{End}^{\text{s}}}^{3}(\mathcal{A}_{2}, \mathbb{V}(\lambda)) = \bigoplus_{\pi \in \mathcal{C}  } m(\pi)  (\pi_{fin})^{K_{0}}. 
\end{equation}
This shows that we construct a subspace of dimension $s_{l + m + 4} s_{l - m + 2}$.\\

\noindent But we can start with a pair $(f_{i}, \overline{g_{j}})$. It means that the $\overline{g_{j}} \in \overline{S_{l - m + 2}}(\Gamma_{1}(1))$. Fix the pair $(f_{i}, \overline{g_{j}})$. We assign to $(f_{i}, \overline{g_{j}})$ the cuspidal automorphic representation  $\pi = \pi(\tau_{f_{i}}, \tau_{\overline{g_{j}}})$ of $GSO(2, 2)(\mathbb{A})$. $\sigma_{f_{i}, \overline{g_{j}}}^{+}$ denotes the extension of $\pi$ to $GO(2, 2)$. Because $(\sigma_{f_{i}, \overline{g_{j}}})_{\infty}^{+} \cong \pi_{l + m + 4, m - l - 2}$, $\sigma_{f_{i}, \overline{g_{j}}} \in \mathcal{A}_{0}(H)$. A similar argument shows that $\Theta(\sigma_{f_{i}, \overline{g_{j}}})$, defined in  (\ref{15}), assign a unique element in the strict endoscopic part of $H^{3}_{!}(\mathcal{A}_{2}, \mathbb{V}(\lambda))$. Explicitly, we construct another copy of $\Theta((\sigma^{+}_{f_{i}, g_{j}}))$, i.e., it appears with multiplicity two in the strict endoscopic part. Note that the dimension of $\overline{S_{l - m + 2}}(\Gamma_{1}(1))$ is equal to dimension of $S_{l - m + 2}(\Gamma_{1}(1))$. This provides us another subspace of dimension $s_{l + m + 4} s_{l - m + 2}$ in the strict endoscopic part. Therefore, we construct a subspace of dimension of $2s_{l + m + 4} s_{l - m + 2}$ in the strict endoscopic part as we desired.
\renewcommand{\qedsymbol}{} 
\end{proof}

The Theorem \ref{MT} shows that the machinery that is presented in this paper can provide us an explicit description of the strict endoscopic part as long as there is enough information on the Euler characteristic of inner cohomology.

\section{Interpretation of the constructed classes in terms of actions of Hecke operators}
\label{sec10}

This section contains some general comments to interpret the results in this paper in terms of Hecke eigenspaces. Explicitly, we want to show that if we consider the generalization of Hecke operators for paramodular representations as in \cite{16}, then our construction can be described in terms of mapping Hecke eigenvectors to Hecke eigenvectors.\\

Let $(\tau, V_{\tau})$ be a global cuspidal automorphic representation of $GL(2, \mathbb{A})$ such that $\tau = \otimes_{\upsilon} \tau_{\upsilon}$ and $V_{\tau} = \otimes_{\upsilon} V_{\upsilon}$ over all places $\upsilon$. We assume that $\tau$ has level $ N_{\tau} = \prod_{\upsilon \in S} p^{a(\tau_{\upsilon})}$. If $\upsilon$ is a finite place, then for $m \geq 1$ let $K_{2}(m)$ be the subgroup of $GL(2, \mathbb{Z})$ consisting of matrices of the form
\begin{equation}
\begin{pmatrix}
a & b \\
c & d  \end{pmatrix}, \quad \text{ $c \equiv 0$ (mod $p^{m}$)}, \quad d \in 1 + p^{n}. 
\end{equation}
 Classically, for each finite place $\upsilon$ we define local Hecke operators at $\upsilon$ by 

\begin{equation*}
T_{1, \upsilon}:= \text{characteristic function of} \hspace{1cm} K_{2}(m)\begin{pmatrix}
\varpi & 0\\
0 & 1
\end{pmatrix} K_{2}(m)
\end{equation*}

\begin{equation*}
T_{1, \upsilon}^{*}:= \text{characteristic function of} \hspace{1cm} K_{2}(m)\begin{pmatrix}
1 & 0\\
0 & \varpi
\end{pmatrix} K_{2}(m),
\end{equation*}  
where act by convolution on $V_{\upsilon}$. According to Theorem \ref{th6}, we know that $\dim(V^{(a(\tau_{\upsilon}))}) = 1$. It is a well-known classical result that the Hecke operators $T_{1, \upsilon}$ and $T_{1, \upsilon}^{*}$ act as endomorphisms on      
$V^{(a(\tau_{\upsilon}))}$. Because it is a one dimensional space, therefore, its elements are Hecke eigenvectors for Hecke operators at place $\upsilon$.\\

B. Roberts and R. Schmidt generalized the concept of Hecke operators for paramodular representations in Chapter 6 in \cite{16}. Briefly, we review the main idea here. Let $(\pi, V_{\pi})$ be a global cuspidal automorphic representation of $GSp(4, \mathbb{A})$ such that $\pi = \otimes_{\upsilon} \pi_{\upsilon}$ and $V_{\pi} = \otimes_{\upsilon} V_{\upsilon}$ over all places $\upsilon$. We assume that $\pi$ is a paramodular representation of level $ N_{\pi} = \prod_{\upsilon \in S} p^{a(\pi_{\upsilon})}$ as defined in Section \ref{sec7}. If $\upsilon$ is a finite place, then for $m \geq 1$ let $K(\mathcal{P}^{m})$ be the paramodular level subgroup at place $\upsilon$. For each of finite places $\upsilon$, Hecke operators at $\upsilon$ were defined by B. Roberts and R. Schmidt in \cite{16} at page 189 as follows.

\begin{equation*}
T_{0, 1, \upsilon}:= \text{characteristic function of} \hspace{1cm} K(\mathcal{P}^{m})\begin{pmatrix}
\varpi & 0 & 0 & 0\\
0 & \varpi & 0 & 0\\
0 & 0 & 1 &0 \\
0 & 0 & 0 & 1
\end{pmatrix} K(\mathcal{P}^{m})
\end{equation*}

\begin{equation*}
T_{1, 0, \upsilon}:= \text{characteristic function of} \hspace{1cm} K(\mathcal{P}^{m})\begin{pmatrix}
\varpi^{2} & 0 & 0 & 0\\
0 & \varpi & 0 & 0\\
0 & 0 & \varpi &0 \\
0 & 0 & 0 & 1
\end{pmatrix} K(\mathcal{P}^{m}),
\end{equation*}  
where act by convolution on $V_{\upsilon}$. According to Theorem 7.4.4. in \cite{16} at page 259, we know that $\dim(V^{(a(\pi_{\upsilon}))}) = 1$ and the Hecke operators $T_{0, 1, \upsilon}$ and $T_{1, 0, \upsilon}$ act as endomorphisms on      
$V^{(a(\pi_{\upsilon}))}$. Because it is a one dimensional space, therefore, its elements are Hecke eigenvectors for Hecke operators at place $\upsilon$. We are following \cite{16} to denote the eigenvalues of $T_{0, 1, \upsilon}$ and $T_{1, 0, \upsilon}$  by $\lambda$ and $\mu$ respectively. In summary, we say that for any paramodular representation $\pi$ of $GSp(4, \mathbb{A})$ with trivial central character, if it is level is $N_{\pi}$, then the elements that are fixed by paramodular level subgroup at minimal level for each place $\upsilon$ are Hecke eigenvectors for Hecke operators $T_{0, 1, \upsilon}$ and $T_{1, 0, \upsilon}$ at corresponded place $\upsilon$.\\

Now let $f$ and $g$ be newforms in $S_{l+m+4}(\Gamma_{1}(N_{1}))$ and $S_{l-m+2}(\Gamma_{1}(N_{2}))$ respectively (i.e., of levels $N_{1}$ and $N_{2}$ respectively) with trivial central character. If $\pi = \pi(\tau_{f}, \tau_{g})$ is the assigned cuspidal automorphic representation of $GSO(2, 2)(\mathbb{A})$ to the pair $(f, g)$, then precisely, $(f, g)$ is assigned to a pair of vectors $(v_{1}, v_{2})$ such that $v_{1} \in V_{\tau_{f}}$, $v_{2} \in V_{\tau_{g}}$, $v_{1} = \otimes_{\upsilon}v_{1, \upsilon}$, $v_{2} = \otimes_{\upsilon}v_{2, \upsilon}$, and $v_{i, \upsilon} \in V^{a(\tau_{i, \upsilon})}$ for $i = 1$ or $2$. The classical newforms $f$ and $g$ are Hecke eigenfunctions for the classical Hecke operators over modular forms (for example see Chapter one in \cite{3}). This means that they are corresponded to $v_{1}$ and $v_{2}$ which are restricted direct product of the Hecke eigenvectors for adelic descriptions of classical Hecke operators i.e., $T_{1, \upsilon}$ and $T_{1, \upsilon}^{*}$.\\

Now we can follow our construction of strict endoscopic part by using global theta lift of $\pi = \pi(\tau_{f}, \tau_{g})$ to $GSp(4, \mathbb{A})$. More precisely, if the $\Theta(\pi(\tau_{f}, \tau_{g}))$ has paramodular level $N$ according to our analytic investigation in Section \ref{sec7}, then
\begin{equation}
\label{1001}
 (f, g) \longleftrightarrow \pi(\tau_{f}, \tau_{g}) \longmapsto (\Pi_{\sigma^{+}, fin})^{K(N)}
\end{equation}

\noindent assigns to $(f, g)$ the strict endoscopic cohmological class $(\Pi_{\sigma^{+}, fin})^{K_{N}}$ under the isomorphism (\ref{4}). Note that we know that $\dim((\Pi_{\sigma^{+}, fin})^{K_{N}}) = 1$ because the paramodular representation $\Pi_{\sigma^{+}}$ has level $N$. Moreover, we know that the elements of  $(\Pi_{\sigma^{+}, fin})^{K_{N}}$ are restricted direct product of eigenvectors of local Hecke operators $T_{0, 1, \upsilon}$ and $T_{1, 0, \upsilon}$ for each finite place $\upsilon$. This means our construction in (\ref{1001}) is the matching of Hecke eigenfunctions $f$ and $g$ (of level $N_{1}$ and $N_{2}$ respectively) to one dimensional Hecke eigenspace  $(\Pi_{\sigma^{+}, fin})^{K_{N}}$, where $N$ determined according to analytic cases in Section \ref{sec7}. We should mention that B. Roberts and R. Schmidt precisely calculate the eigenvalues $\lambda$ and $\mu$ for local papramodular representations of $GSp(4,F _{\upsilon})$ for each finite place $\upsilon$ in Table A. 9 in \cite{16}. Therefore, according to analytic cases at Section \ref{sec8}, we are able to determine the eigenvalues $\lambda$ and $\mu$ in terms of $\lambda_{f, \upsilon}$ and $\lambda_{g, \upsilon}$ which denoted the eigenvalues of $f$ and $g$ respectively (although we skip the calculation here.) In summary the approach to attack Conjecture \ref{conj1} in this paper provides us extra explicit data in terms of actions of Hecke operators. In fact the author believes that this provides a method to relate the $L$-packets of $\pi$ to $\Pi_{\sigma^{+}}$ in more precise way. In other words the results in this paper should have interpretation in terms of $L$-packets too although this is a topic for further research project.

\begin{acknowledgements}
This paper was the Ph. D project of the author under supervision of S. S. Kudla. The author thanks him for all his supports during this project. The author also wish to thank J. Arthur, R. Schmidt, J. Schwermer, M. Shahshahani and  S. Takeda for helpful discussions. 
\end{acknowledgements}

\bibliographystyle{amsalpha}
\bibliography{OnthestrictendoscopicpartofmodularSiegel}

\providecommand{\bysame}{\leavevmode\hbox to3em{\hrulefill}\thinspace}
\providecommand{\MR}{\relax\ifhmode\unskip\space\fi MR }
\providecommand{\MRhref}[2]{%
  \href{http://www.ams.org/mathscinet-getitem?mr=#1}{#2}
}
\providecommand{\href}[2]{#2}
\begin{thebibliography}{BFdG08}

\bibitem[Art96]{8}
J.~Arthur, \emph{{$L^{2}$-cohomology and automorphic representations}},
  Advances in the Mathematical Science, CRM Proceedings and Lecture Notes,
  American Mathematical Society, Providence (1996).

\bibitem[BFdG08]{24}
J.~Bergstr\"{o}m, C.~Faber, and V.~der Geer, \emph{Siegle modular forms of
  genus 2 and level 2: cohomology computation and conjectures}, Int. Math. Res.
  Not. IMRN, pages Art. ID rnn 100, 20 (2008).

\bibitem[BSP97]{28}
S.~B\"{o}cherer and R.~Schulze-Pillot, \emph{Siegel modular forms and theta
  series attached to quaternion algebras}, Nagoya Math. J. \textbf{147},
  71–106. (1997).

\bibitem[BW80]{40}
A.~Borel and N.~Wallach, \emph{Continuous cohomology, discrete subgroups, and
  representations of reductive groups}, Annals of Math. Studies \textbf{94},
  Princeton University Press (1980).

\bibitem[Cas73]{19}
W.~Casslman, \emph{{On Some Results of Atkin - Lehner}}, Math. Ann.
  \textbf{201}, 301-314 (1973).

\bibitem[Del79]{21}
P.~Deligne, \emph{Vari\'et\'es de shimura: interpr\'etation modulaire et
  technique de construction de mod\'eles cononiques,}, Proc. Sympos. Pure Math,
  \textbf{33}, 247-290 (1979).

\bibitem[Fal83]{1}
G.~Faltings, \emph{{On the cohomology of locally symmetric Hermitian spaces.
  Paul Dubreil and Marie-Paule Malliavin algebra seminar, (Paris, 1982), 55-98,
  Lecture Notes in Math, 1092, Springer, Berlin,}}, Springer Verlag. (1983).

\bibitem[FC83]{2}
G.~Faltings and C-L. Chai, \emph{{Degeneration of abelian varieties. Ergebnisse
  der Math. 22.}}, Springer Verlag. (1983).

\bibitem[FvdG04]{22}
C.~Faber and G.~van~der Geer, \emph{sur la cohomologie des syst\'emes locaux
  sur les espaces de modules des courbes de genre 2 et des surfaces
  ab\'eliennes. i, ii c.}, R. Math. Acad. Sci. Paris \textbf{338}, No.5,
  p.381-384 and No.6, 467-470 (2004).

\bibitem[Gel75]{15}
S.~Gelbart, \emph{Automorphic forms on adele group}, Princeton University
  Press, Princeton (1975).

\bibitem[GT]{17}
W.~T. Gan and S.~Takeda, \emph{{Theta correspondence for $GSp(4)$}}, to appear
  in {\it Represent. Theory}.

\bibitem[Har89]{6}
M.~Harris, \emph{Functorial properties of toroidal compactifications of locally
  symmetric varieties,}, Proc. London Math. Soc. \textbf{59}, 1-22 (1989).

\bibitem[Har90]{4}
\bysame, \emph{Automorphic forms of $\overline{\partial}$-cohomology type as
  coherent cohomology classes}, J. Differential Geom. \textbf{32}, 1-63 (1990).

\bibitem[HK92]{5}
M.~Harris and S.~S. Kudla, \emph{Arithmetic atuomorphic forms for the
  nonholomorphic discrete series of $gsp(2)$}, Duke Mathematics Journal, Vol.
  66, No.1 (1992).

\bibitem[HPS83]{11}
R.~Howe and I.~I. Piatetski-Shapiro, \emph{Some examples of automorphic forms
  on {$Sp_{4}$}}, Duke Math. J. \textbf{50}, 55-106 (1983).

\bibitem[HST93]{12}
M.~Harris, D.~Soudry, and R.~Taylor, \emph{l-adic representations associated to
  modular forms over imaginary quadratic fields i: lifting to
  {$GSp_{4}(\mathbb{Q})$}}, Invent. Math. \textbf{112}, 377-411 (1993).

\bibitem[JBdG]{43}
C.~Faber J.~Bergstr\"{o}m and V.~der Geer, \emph{Siegel modular forms of degree
  three and the cohomology of local systems}, To appear in Selecta Mathematica,
  arXiv:1108.3731.

\bibitem[LZ01]{41}
S.~T. Lee and C.~Zhu, \emph{On the {$(g,K)$}-cohomology of certain theta
  lifts}, Pacific Journal of Mathematics. Vol. \textbf{199}, No. 1, 93-110
  (2001).

\bibitem[OS09]{25}
T.~Oda and J.~Schwermer, \emph{On mixed hodge structures of shimura varieties
  attached to inner forms of the symplectic group of degree two}, Tohoku Math.
  Journal. \textbf{61}, 83 - 113 (2009).

\bibitem[Rob96]{9}
B.~Roberts, \emph{The theta correspondence for similitudes}, Israel J. Math.
  \textbf{94}, 285-317 (1996).

\bibitem[RS07]{16}
B.~Roberts and R.~Schmidt, \emph{{Local Newforms for $GSp(4)$}}, Springer
  Lecture Notes, vol. 1918 (2007).

\bibitem[Sch90]{23}
A.~Scholl, \emph{Motives for modular forms.}, Invent. Math. \textbf{100},
  P.419-430 (1990).

\bibitem[Sch97]{7}
W.~Schmid, \emph{Discete series}, Proceedings of Symposia in Pure Mathematics
  Vol. \textbf{61}, pp. 83-113 (1997).

\bibitem[Sch02]{18}
R.~Schmidt, \emph{{Some remarks on local newforms for $GL(2)$}}, J. Ramanujan
  Math. Soc. \textbf{17}, 115-147 (2002).

\bibitem[ST93]{20}
Jr. Sally and M.~Tadic, \emph{{ Induced representations and classifications for
  $GSp(2, F)$ and $Sp(2, F)$}}, { M\' em. Soc. Math. France (N.S.),
  \textbf{52}, 75-133} (1993).

\bibitem[Tak09]{13}
S.~Takeda, \emph{Some local-global non-vanishing results for theta lifts from
  orthogonal groups}, Trans. Amer. Math. Soc. \textbf{361}, 5575-5599 (2009).

\bibitem[Tak11]{10}
\bysame, \emph{Some local-global non-vanishing results of theta lifts for
  symplectic-orthogonal dual pairs}, J. Reine Angew. Math. \textbf{657}, 81-111
  (2011).

\bibitem[Tsu80]{26}
R.~Tsushima, \emph{A formula for the dimension of spaces of siegel cusp forms
  of degree three}, AM. J. Math. \textbf{102}, p.937-977 (1980).

\bibitem[Tsu83]{27}
\bysame, \emph{{An explicit dimension formula for the space of generalized
  automorphic forms with respect to $SP(2, \mathbb{Z})$.}}, Proc. Jap. Acad.
  \textbf{59A}, 139-142 (1983).

\bibitem[vdG08]{3}
G.~van~der Geer, \emph{{Siegel modular forms and their applications. In: J.
  Bruinier, G. van der Geer, G. Harder, D. Zagier: The 1-2-3 of modular forms.
  }}, Springer Verlag. (2008).

\bibitem[Wei09]{14}
R.~Weissauer, \emph{Endoscopy for {$GSp(4)$} and the cohomology of siegel
  modular threefolds, lecture notes in mathematics}, Springer-Verlag, Vol.1968,
  Berlin (2009).

\end{thebibliography}

\end{document}